\documentclass[reqno,11pt]{amsart}
\usepackage{amsfonts}
\usepackage{graphicx}
\usepackage{amsfonts,amsmath, amssymb}
\usepackage{bbm,mathrsfs,mathscinet}
\usepackage[colorlinks,linkcolor=blue,anchorcolor=red,citecolor=blue]{hyperref}
\usepackage{marginnote}
\usepackage{color}
\usepackage{placeins}
\usepackage{verbatim}
\usepackage{tikz}
\usetikzlibrary{decorations,arrows}
\allowdisplaybreaks

\usepackage[margin=1.1in]{geometry}

\usepackage{hyperref}
\hypersetup{hidelinks}

\numberwithin{equation}{section}

\newcommand{\R}{\mathbb{R}}

\newcommand{\cal}{\mathcal}

\makeatletter
\newcommand{\rmnum}[1]{\romannumeral #1}
\newcommand{\Rmnum}[1]{\expandafter\@slowromancap\romannumeral #1@}
\makeatother

\newtheorem{theorem}{Theorem}[section]

\newtheorem{lemma}[theorem]{Lemma}
\newtheorem{proposition}[theorem]{Proposition}

\theoremstyle{remark}
\newtheorem{remark}{Remark}[section]

\theoremstyle{definition}

\def\x{\xi}

\def\pa{\partial}

\begin{document}
	\title[Axisymmetric subsonic Euler flows]
	{Global three-dimensional subsonic Euler flows past an axisymmetric obstacle with large vorticity}
	
	\author[D.-H. Wang]{Dehua Wang}
	\address{Department of Mathematics, University of Pittsburgh, Pittsburgh, PA 15260, USA.}
	\email{dhwang@pitt.edu}
	
	\author[T.-Y. Wang]{Tian-Yi Wang}
	\address{School of Mathematics and Statistics, 
		Wuhan University of Technology, Wuhan, Hubei 430070, China.}
	\email{tianyiwang@whut.edu.cn}
	
	\author[W.-Q. Wang]{Weiqiang Wang}
	\address{Department of Mathematics, University of Pittsburgh, Pittsburgh, PA 15260, USA.}
	\email{wew179@pitt.edu}
	
	\begin{abstract}
		In this paper, we prove the existence and uniqueness of subsonic solutions to the steady Euler flows past a smooth, axisymmetric obstacle. Specifically, for a broad class of prescribed positive axial velocities in the upstream, the subsonic Euler flow exists provided that the upstream density exceeds a critical threshold. The non-degeneracy of the axial velocity is rigorously established by combining the strong maximum principle with a refined continuity argument. The asymptotic behavior of the flow is obtained from uniform integral estimates for the difference between the flow and the upstream state. In addition, this result accommodates flows with large vorticity under a structural condition, thereby differing from previous results in the two-dimensional case.
	\end{abstract}

	\subjclass[2020]{35Q31, 35M30, 76N10, 76G25, 35B40}
	\keywords{Compressible Euler equations; Subsonic flow; Axisymmetric obstacle; Large vorticity}
	\date{\today}
	\maketitle
	
	\setcounter{tocdepth}{2}
	\tableofcontents

	\section{Introduction}
	
	In this paper, we consider the existence and uniqueness of the three-dimensional subsonic flow past an axisymmetric obstacle,  governed by the following steady compressible isentropic Euler equations:
	\begin{equation}\label{1.1}
		\left\{
		\begin{aligned}
			&\operatorname{div}(\rho\mathbf{U})=0,\\
			&\rho (\mathbf{U}\cdot \nabla )\mathbf{U}+\nabla P=0,
		\end{aligned}
		\right.\quad {\bf x}=(x,y,z)\in \Omega_{0},
	\end{equation}
	subject to the following boundary condition:
	\begin{equation}\label{1.2}
		\mathbf{U}\cdot \vec{n}=0,\,\,\text{on }\partial\Omega_0,
	\end{equation}
	which is equivalent to 
	\begin{equation}\label{1.2+}
		\rho \mathbf{U}\cdot \vec{n}=0,\,\,\text{on }\partial\Omega_0
	\end{equation}
	for flows without vacuum,
	and far-field conditions in the upstream
	\begin{equation}\label{1.2-1}
		\lim\limits_{x\to -\infty}\mathbf{U}	(x,y,z)=(u_{\infty}(r),0,0),\quad \lim\limits_{x\to -\infty}\rho(x,y,z)=\rho_{\infty}.
	\end{equation}
	Here, $\rho$, $\mathbf{U}$ and $P$ are the density, velocity, and pressure, respectively; the pressure-density relation satisfies the $\gamma$-law:
	$$
	P(\rho)=\frac{1}{\gamma}\rho^{\gamma},
	$$
	where $\gamma>1$ is the adiabatic index. 
	The domain 
	$$
	\Omega_{0}=\{(x,y,z)\,|\,x\in \R,\,\,r=\sqrt{y^2+z^2}> f(x)\}
	$$
	is the complement of a finite axisymmetric obstacle in $\R^3$, see Figure \ref{fig1}, and 
	$$
	\vec{n}=\frac{1}{\sqrt{1+|f'(x)|^2}}\Big(f'(x),-\frac{y}{r},-\frac{z}{r}\Big)
	$$ 
	is the unit outward normal vector of $\Omega_0$. In this paper, the function $f(x)$ is assumed to satisfy
	\begin{align}\label{1.2-2}
		f(x)\geq 0,\quad  f(x)\equiv 0\quad \text{for $x\leq 0$ and $x\geq 1$},\quad f\in C^{2,\nu}(\R)\quad \text{for some $0<\nu<1$}.
	\end{align}
	
	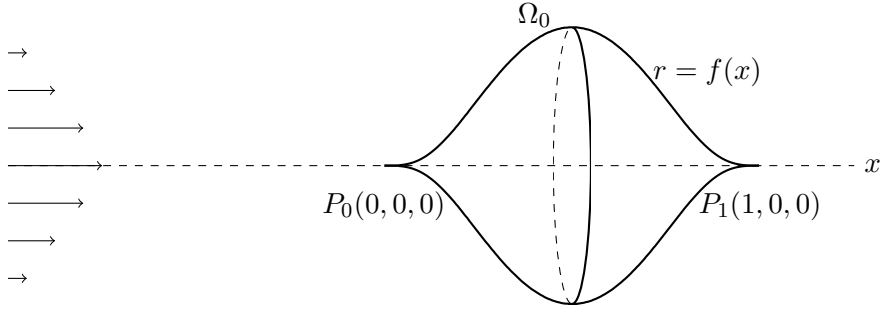
\begin{figure}[htbp]
		\centering
		\begin{tikzpicture}[scale=5]
			
			\draw[dashed] (-1,0) -- (1.25,0) node[right] {$x$};
			
			\draw[thick,black,domain=0.001:0.999,samples=200]
			plot (\x,{8*exp(-1/((\x+0.07)*(1.07-\x)))});
			\draw[thick,black,domain=0.001:0.999,samples=200]
			plot (\x,{-8*exp(-1/((\x+0.07)*(1.07-\x)))});
			
			\foreach \x in {0.5}
			{
				\pgfmathsetmacro{\rad}{-8*exp(-1/((\x+0.07)*(1.07-\x)))}
				\pgfmathsetmacro{\xr}{0.05}
				
				\begin{scope}
					\clip (\x-\xr,-\rad) rectangle (\x, \rad);
					\draw[dashed]
					(\x,0) ellipse[x radius=\xr, y radius=\rad];
				\end{scope}
				
				\begin{scope}
					\clip (\x, -\rad) rectangle (\x+\xr, \rad);
					\draw[thick]
					(\x,0) ellipse[x radius=\xr, y radius=\rad];
				\end{scope}
			}
			
			\draw[->] (-1,0)--(-0.75,0);
			\draw[->] (-1,0.1)--(-0.8,0.1);
			\draw[->] (-1,0.2)--(-0.875,0.2);
			\draw[->] (-1,0.3)--(-0.95,0.3);
			\draw[->] (-1,-0.1)--(-0.8,-0.1);
			\draw[->] (-1,-0.2)--(-0.875,-0.2);
			\draw[->] (-1,-0.3)--(-0.95,-0.3);
			
			\node at (0.4,0.4) {$\Omega_{0}$};
			\node at (0.86,0.25) {$r=f(x)$};
			\node[below] at (0,-0.04) {$P_{0}(0,0,0)$};
			\node[below] at (1,-0.04) {$P_{1}(1,0,0)$};
		\end{tikzpicture}
		\caption{Subsonic Euler flows past an axisymmetric obstacle}
		\label{fig1}
	\end{figure}
	
	In light of the axial symmetry of $\Omega_0$, it is natural to look for axially symmetric solutions to \eqref{1.1} (cf. \cite{DD-2011,DD-2012,DD-2016}):
	$$
	\rho=\rho(x,r),\quad \mathbf{U}=\Big(u(x,r),v(x,r)\frac{y}{r},v(x,r)\frac{z}{r}\Big),
	$$
	where $u$ is the axial velocity and $v$ is the radial velocity. Then \eqref{1.1} and \eqref{1.2}--\eqref{1.2-1} can be rewritten in cylindrical coordinates $(x,r)$ as follows:
	\begin{align}\label{1.3}
		\left\{
		\begin{aligned}
			&\partial_{x}(r\rho u)+\partial_{r}(r\rho v)=0,\\
			&(r\rho u^2)_{x}+(r\rho u v)_{r}+rP_{x}=0,\\
			&(r\rho uv)_{x}+(r\rho v^2)_{r}+rP_{r}=0,
		\end{aligned}
		\right.\quad (x,r)\in \Omega=\{(x,r)\,|\,x\in \R,\,\, r>f(x)\}
	\end{align}
	supplemented with the boundary condition:
	\begin{align}\label{1.3-1}
		f'(x)u-v=0\quad \text{on $\Gamma:=\partial\Omega=\{(x,r)\,|\,r=f(x)\}$}, 
	\end{align}
	and the far-field conditions in the upstream
	\begin{align}\label{1.3-2}
		\lim\limits_{x\to -\infty}(u(x,r),v(x,r))=(u_{\infty}(r),0),\quad \lim\limits_{x\to -\infty}\rho(x,r)=\rho_{\infty}.	
	\end{align}
	We would like to point out that if the radial velocity $v$ in the upstream is zero, then it follows directly from $\eqref{1.3}_{3}$ that $P_{r}=0$ in the upstream, which implies that $\rho_{\infty}$ must be a positive constant.
	
	We denote by $c(\rho)=\sqrt{P'(\rho)}=\rho^{\frac{\gamma-1}{2}}$ the sound speed.
	Then the Mach number of the flow is defined by
	$$
	M_{a}=\frac{\sqrt{u^2+v^2}}{c(\rho)}=\frac{q}{\rho^{\frac{\gamma-1}{2}}},
	$$
	where $q=\sqrt{u^2+v^2}$ is the speed of the flow. In this paper, we assume the upstream flow is uniformly subsonic, \textit{i.e.}, the Mach number in the upstream is strictly less than 1 for any $r\geq 0$:
	\begin{align}\label{Ma}
		M_{a}^{\infty}=\frac{\sup_{r\geq 0}u_{\infty}(r)}{\rho_{\infty}^{\frac{\gamma-1}{2}}}<1\; \Longleftrightarrow \; \rho_{\infty}> \rho_{\infty}^{*}=:\Big(\sup_{r\geq 0}u_{\infty}(r)\Big)^{\frac{2}{\gamma-1}}.
	\end{align}
	
	Subsonic Euler flows past an obstacle are of fundamental importance in aerodynamics \cite{B1,CF}. Although the mathematical study of this problem has a long history, most works concentrate on isentropic and irrotational flows. The first existence result for two-dimensional subsonic irrotational flows past a smooth body, assuming a small upstream Mach number, was obtained by Shiffman \cite{S1952}. Subsequently, Bers \cite{B2} removed the smallness restriction on the upstream Mach number, establishing existence for the full range of subsonic speeds up to a critical value via quasiconformal mappings. The general uniqueness and asymptotic behavior of such flows were later investigated in \cite{FG-1}; see also \cite{GS-1954} for additional properties. For higher-dimensional cases, Finn and Gilbarg \cite{FG-2} first derived asymptotic estimates and uniqueness criteria for subsonic flows, under certain restricted Mach number ranges. The existence of such flows was established by Dong and Ou \cite{DO-1993}, via Hilbert space variational methods, obtaining uniform subsonic irrotational flows for the full range of Mach numbers below a critical value. Employing the compensated compactness framework, the existence of weak solutions for subsonic–sonic irrotational flows was established in \cite{CDSW-2007,HWW-2011}.
	For transonic potential flows, the seminal work of Morawetz \cite{M1956,M1957,M1958} demonstrates that, in general, smooth transonic flows past a profile are unstable under small perturbations of the profile. Further developments concerning the Morawetz problem can be found in \cite{CDSW-2008,CGS}. 
	
	Another classical problem concerns subsonic flows in infinitely long nozzles, first posed by Bers in his survey \cite{B1} and subsequently extensively investigated. Xie and Xin \cite{XX-2007,XX-2010-1,XX-2010-2} established the existence of two-dimensional potential flows, isentropic Euler flows with small vorticity, and three-dimensional axisymmetric potential flows in such nozzles.  The case of two-dimensional Euler flows with large vorticity was later settled in \cite{DXX-2014}; see also \cite{DX-2014} for Euler flows with stagnation points in nonsmooth nozzles. For three-dimensional axisymmetric nozzles, Du and Duan \cite{DD-2011,DD-2012,DD-2016} studied both irrotational flows and rotational flows with small or large vorticity. For general multidimensional nozzles without symmetry assumptions, the existence and uniqueness of global subsonic potential flows were obtained in \cite{DXY-2011}. These results have been further extended to non-isentropic Euler flows with or without swirl  \cite{CDX-2012,DWX-2018,DL-2013,DL-2015,DW-2018,Z-2022}, as well as to subsonic flows in finitely long or periodic nozzles \cite{CX-2012,CX-2014,DWX-2014}.
	
	Regarding rotational Euler flows in exterior domains, the two-dimensional theory has been developed for half-plane configurations \cite{C-2011}, and for flows past a wall or a symmetric body \cite{CDXX-2016}. The present paper considers the well-posedness of subsonic Euler flows with large vorticity past an axisymmetric obstacle. The main results are stated as follows.

	\begin{theorem}\label{thm1}
		Suppose that the upstream axial velocity $u_{\infty}(r)$ in \eqref{1.3-2} satisfies 
		\begin{align}\label{1.3-3}
			u_{\infty}\in C^{2,\alpha}([0,\infty)),\quad u_{\infty}(r)>0,\quad u_{\infty}'(0)=0,\quad\lim\limits_{r\to \infty}u_{\infty}(r)=\bar{u}>0,
		\end{align}
		and the following structural condition:
		\begin{align}\label{1.3-4}
			u_{\infty}''(r)r\geq u_{\infty}'(r)
		\end{align}
		for $r\ge 0$. There exists a critical value $\rho_{cr}>\rho_{\infty}^{*}>0$, such that if the upstream density $\rho_{\infty}$ in \eqref{1.3-2} is larger than $\rho_{cr}$, 
		the Euler system \eqref{1.3} with the boundary conditions \eqref{1.3-1}--\eqref{1.3-2} admits a subsonic solution $(\rho,u,v)\in (C^{1,\alpha}(\Omega)\cap C^{\alpha}(\bar{\Omega}))^3$ for some $\alpha\in (0,\nu)$.
		Moreover,
		\begin{itemize}
			\item [(1)] the flow is uniformly subsonic:
			\begin{equation}\label{1.3-5}
				\sup_{(x,r)\in \overline{\Omega}}(u^2+v^2-c^2(\rho))<0.
			\end{equation}
			And the axial velocity is positive:
			\begin{align}\label{1.3-6}
				u>0\quad \text{in }\Omega \cup \{(x,r)\,|\,0<x<1, r=f(x)\}.
			\end{align}
			\item[(2)] the flow satisfies
			\begin{align}\label{1.3-7}
				\|r^{\frac{1}{2}}(\rho u-\rho_{\infty}u_{\infty},\rho v)\|_{L^2(\Omega)}\leq C
			\end{align}
			for some constant $C>0$ and has the following asymptotic behaviors in far fields:
			\begin{align}\label{1.3-8}
				\quad\,\,(\rho,u,v)(x,r)\to (\rho_{\infty},u_{\infty}(r),0),\,\, (\nabla\rho,\nabla v)(x,r)\to (0,0,0,0),\,\, \nabla u(x,r)\to (0,u_{\infty}'(r)),
			\end{align}
			as $|x|\to \infty$ uniformly for $r$ in any compact set $K\subseteq (0,\infty)$, and
			\begin{align}\label{1.3-9}
				u(x,r)\to \bar{u},\quad v(x,r)\to 0,\quad \rho(x,r)\to \rho_{\infty},
			\end{align}
			as $r\to \infty$ uniformly for $x$ in any compact set $K'\subseteq \mathbb{R}$.	
			\item[(3)] the subsonic flow satisfying the Euler system \eqref{1.3}, boundary conditions \eqref{1.3-1}--\eqref{1.3-2}, \eqref{1.3-6}--\eqref{1.3-7} and the asymptotic behavior \eqref{1.3-8}--\eqref{1.3-9} is unique.
			\item[(4)] $\rho_{cr}$ is the critical upstream density for the existence of subsonic flow past an axisymmetric obstacle in the following sense: either
			\begin{align}\label{1.3-10}
				\sup_{(x,r)\in \overline{\Omega}}\frac{\sqrt{u^2+v^2}}{c(\rho)}\to 1\text{ as }\rho_{\infty}\downarrow \rho_{cr},
			\end{align}
			or there is no constant $\sigma>0$, such that for all $\rho_{\infty}\in (\rho_{cr}-\sigma,\rho_{cr})$ there are Euler flows satisfying \eqref{1.3}, subsonic condition \eqref{1.3-5} and asymptotic behavior \eqref{1.3-7}--\eqref{1.3-9}, and
			$$
			\sup_{\rho_{\infty}\in (\rho_{cr}-\sigma,\rho_{cr})}\sup_{(x,r)\in\overline{\Omega}}\frac{\sqrt{u^2+v^2}}{c(\rho)}<1.
			$$
		\end{itemize}
	\end{theorem}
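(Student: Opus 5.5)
We follow the stream-function framework of Du–Duan and Chen–Du–Xie–Xin, adapted to the exterior axisymmetric geometry.

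\textbf{Reformulation.} Since $\partial_x(r\rho u)+\partial_r(r\rho v)=0$, we introduce a stream function $\psi$ with $\psi_r=r\rho u$, $\psi_x=-r\rho v$, normalized by $\psi=0$ on the axis together with $\Gamma$; the boundary condition \eqref{1.3-1} is exactly the statement that $\Gamma$ is a streamline. Upstream, $\psi_\infty(r)=\rho_\infty\int_0^r s u_\infty(s)\,ds$, which is strictly increasing because $u_\infty>0$. The Bernoulli function $B=\frac{q^2}{2}+\frac{\rho^{\gamma-1}}{\gamma-1}$ is transported along streamlines, so $B=\mathcal B(\psi)$ with $\mathcal B$ determined by the upstream data through $\psi_\infty^{-1}$. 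In the subsonic branch, $\rho=H\big(\frac{|\nabla\psi|^2}{r^2},\mathcal B(\psi)\big)$. The vorticity equation then becomes the quasilinear equation
\[
\operatorname{div}\Big(\frac{\nabla\psi}{rH}\Big)=r H\,\mathcal B'(\psi),
\]
which is elliptic as long as the flow is subsonic. We rewrite this for $\phi=\psi-\psi_\infty(r)$, with the background flow subtracted. A direct computation expresses the source term through $\omega_\infty = -u_\infty'$ as $\frac{d}{d\psi}\big(\frac{u_\infty'}{r\rho_\infty}\big)$-type quantities. The structural condition \eqref{1.3-4}, $(u_\infty'/r)'\ge 0$, is precisely what makes the zeroth-order coefficient in the linearization of the right-hand side in $\psi$ have the favorable sign. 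This gives a maximum/comparison principle and an energy estimate without any smallness assumption on the vorticity; it is where large vorticity is allowed, in contrast to 2D.

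\textbf{Existence.} We truncate the nonlinearity in $|\nabla\psi|^2/r^2$ so that the equation is uniformly elliptic, and truncate $\psi$ outside $[0,\infty)$. We then solve on bounded domains $\Omega_L=\Omega\cap\{|x|<L, r<L\}$ with $\phi=0$ on the artificial boundary, using Schauder fixed point and Leray–Schauder arguments. Next we derive $L$-independent estimates. Testing with $\phi$, using the sign condition and a Hardy-type inequality in the weight $r$, gives $\|r^{-1/2}\nabla\phi\|_{L^2}\le C$, which is \eqref{1.3-7}. Near the axis we use that $\psi/r^2$ solves a 5-dimensional-type elliptic equation, which handles the singularity at $r=0$. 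Local $L^\infty$ and $C^{1,\alpha}$ estimates then follow by De Giorgi–Nash–Moser and Schauder theory, and we let $L\to\infty$. When $\rho_\infty$ is large, the Mach number is small and the gradient bounds show the truncation is inactive, so we obtain a genuine subsonic solution. The critical $\rho_{cr}$ is then defined as the infimum of the $\rho_\infty$ for which the truncation can be removed, and a continuity argument yields the dichotomy (4).

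\textbf{Positivity and asymptotics.} This is the main obstacle: $u>0$ is needed to invert $\psi$ and to make the flow well defined from $\psi$, i.e. $\psi_r>0$. We would apply a continuity argument in a parameter $\tau\in[0,1]$ that deforms the obstacle $f$ into $\tau f$ (or deforms the data). Along the way, $w=\psi_r$ satisfies a linear elliptic equation obtained by differentiating in $r$, again using \eqref{1.3-4}. By the strong maximum principle and the Hopf lemma on $\Gamma$, together with $w\to r\rho_\infty u_\infty>0$ at infinity, the set of $\tau$ where $u>0$ is open and closed. The degeneracy at the points $P_0,P_1$, where $\Gamma$ meets the axis, requires separate care, which is why \eqref{1.3-6} excludes them. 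The asymptotic limits \eqref{1.3-8}--\eqref{1.3-9} come from applying the uniform local $C^{1,\alpha}$ bounds to translates $\phi(\cdot+x_n,\cdot)$, combined with the $L^2$ bound, which forces the translated limits to vanish.

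\textbf{Uniqueness.} For two solutions, the corresponding stream functions satisfy the same equation with the same upstream data. Their difference satisfies a linear elliptic equation whose zeroth-order coefficient has the good sign by \eqref{1.3-4}. The weighted $L^2$ bound \eqref{1.3-7} justifies an energy estimate with cutoffs, which forces the difference to vanish.
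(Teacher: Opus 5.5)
There is a genuine gap in the step you yourself call the main obstacle: the positivity of $w=\psi_r$. Closedness of your continuation set is fine. Since $u_\infty'\le 0$ (which follows from \eqref{1.3-4} and $u_\infty'(r)/r\to 0$), the differentiated equation has the form $a_{ij}\partial_{ij}w+b_i\partial_i w+cw\le 0$. The strong maximum principle for a nonnegative supersolution that attains the value $0$ needs no sign condition on $c$.

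Openness, however, is not supplied by anything you state. The function $w=r\rho u$ vanishes on the axis, and $u$ vanishes at the stagnation points $P_0,P_1$, so $w$ has no positive margin that survives a small perturbation. Near the axis the zeroth-order coefficient also contains the singular, wrongly signed term $+1/r^2$. The model operator $\partial_{rr}-\frac{1}{r}\partial_r+\frac{1}{r^2}$ annihilates $r$, so no local argument distinguishes $w\approx r$ from $w\approx -r$ there. Moreover, deforming $f\mapsto \tau f$ at fixed $\rho_\infty$ presupposes a continuous branch of subsonic solutions, with existence and uniqueness for every $\tau\in[0,1]$. This is not automatic, because the critical density depends on the obstacle.

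The paper avoids all of this by proving non-degeneracy at the regularized level:
\begin{itemize}
\item On the bounded domains $\Omega_{L,N}$, $1/r$ is replaced by $1/(r+k)$. The equation then has bounded coefficients up to the axis, and Hopf's lemma applies there. The property to be propagated is $\partial_r\psi^k_{L,N}>0$ on the whole compact closure, which gives a positive minimum.
\item The base case is the irrotational flow, treated through the velocity potential. Differentiating the potential equation in $x$ does not hit the $r$-dependent coefficients.
\item One then continues in the size of $\|u_{\infty,L}'\|_{C^{0,1}}$. Schauder estimates on the difference, starting from a positive minimum, give openness; the strong maximum principle gives closedness.
\item Only $\partial_r\psi\ge 0$ survives the limits $N\to\infty$, $k\to 0$, $L\to\infty$. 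Strict positivity in $\Omega$ is recovered after each limit from the strong maximum principle, together with far-field positivity for the final solution.
\end{itemize}

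A second gap is the $L$-independent control of $\phi=\psi-\bar\psi$ itself. Your weighted Dirichlet bound plus De Giorgi--Nash--Moser cannot provide it: a bump of height $\sqrt{R}$ and width $R$ centred at $r\sim R$ has $\iint|\nabla\phi|^2/r\,{\rm d}x{\rm d}r$ of order one. The comparison principle alone gives only $0\le\psi\le\bar\psi$, that is, $|\phi|\lesssim\rho_\infty r^2$. The paper obtains $-C\rho_\infty\le\psi_L-\bar\psi_L\le 0$ by also comparing with a second explicit solution, the displaced shear flow $\hat\psi_L$ on $\{J<r<L\}$. It solves the same equation, vanishes at $r=J$, and stays within $C\rho_\infty$ of $\bar\psi_L$ (Propositions~\ref{prop1}--\ref{prop2}, Lemma~\ref{lem1}).

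This pointwise bound is what yields the gradient bound $|\nabla\phi|\le\mathscr{C}\rho_\infty r$, which removes the subsonic cutoff uniformly in $L$. A sup bound on the difference of two solutions is also what makes the cutoff cross term $\iint\eta|\phi||\nabla\eta||\nabla\phi|/r$ tend to zero in the uniqueness argument. The weighted bound \eqref{1.3-7} by itself does not do that.
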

	
	As $\rho_{\infty}\downarrow \rho_{cr}$ in Theorem \ref{thm1}, using the compensated compactness framework developed in \cite{CHW-2016}, we have the following theorem.
	
	\begin{theorem}\label{thm2}
		Assume that $\{(\rho_{n},\mathbf{U}_{n})\}$ is a sequence of subsonic Euler flows established in Theorem \ref{thm1} with $\rho_{\infty}$ replaced by $\rho_{\infty}^{n}$. Suppose that $\rho_{\infty}^{n}\downarrow \rho_{cr}$ as $n\to \infty$, then there exists a limiting flow $(\rho,\mathbf{U})$ satisfying
		$$
		(\rho_{n}, \mathbf{U}_{n})\to (\rho, \mathbf{U})\quad \text{almost everywhere in $\Omega$ as $n\to \infty$}.
		$$
		Furthermore, $(\rho, \mathbf{U})$ satisfies the Euler system \eqref{1.1} in the sense of distribution and the boundary condition \eqref{1.2+} in the sense of normal trace.
	\end{theorem}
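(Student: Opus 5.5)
The plan is to run the compensated compactness scheme of \cite{CHW-2016} for steady Euler flows on the sequence $(\rho_n,\mathbf{U}_n)$. In the limit $\rho_\infty^n\downarrow\rho_{cr}$ one only loses \emph{uniform} subsonicity; we still have $|\mathbf{U}_n|\le c(\rho_n)$ pointwise, and that is all the framework needs.

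\textbf{Step 1: uniform bounds.} Each flow from Theorem \ref{thm1} conserves the Bernoulli quantity $B=\frac{q^2}{2}+\frac{\rho^{\gamma-1}}{\gamma-1}$ along streamlines. All streamlines come from upstream, where $B=\frac12u_\infty^2(r)+\frac{(\rho_\infty^n)^{\gamma-1}}{\gamma-1}$, and this is bounded uniformly in $n$. Subsonicity gives $q<c(\rho)$, so
\[
\frac{\rho_n^{\gamma-1}}{\gamma-1}\le B\le \Big(\frac12+\frac1{\gamma-1}\Big)\rho_n^{\gamma-1}.
\]
Hence $\rho_n$ is bounded above and below by positive constants independent of $n$, and $|\mathbf{U}_n|\le c(\rho_n)$ is also bounded. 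Thus $(\rho_n,\mathbf{U}_n)$ lies in a fixed compact subset of the closed subsonic region, away from vacuum. This bounded-$L^\infty$ setting is exactly what \cite{CHW-2016} requires.

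\textbf{Step 2: compactness of entropy dissipation.} The flows are smooth, so we write the axisymmetric system in the 2-D $(x,r)$ variables with source terms: the $r$-weight contributes lower-order terms such as $\rho v$ and $\rho v^2/r$, which are locally bounded for $r>0$. For the steady Euler system in the form of \cite{CHW-2016}, namely mass, Bernoulli law and vorticity/momentum, we take the standard entropy pairs $(Q_1,Q_2)$, such as the mass flux together with the irrotational or Bernoulli relations. For each $n$, $\partial_xQ_1(\rho_n,\mathbf{U}_n)+\partial_rQ_2(\rho_n,\mathbf{U}_n)$ is an $L^\infty$-bounded term (from the geometric sources and the vorticity, itself controlled by $B'$ and $u_\infty$) plus exact zero. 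Hence it is bounded in $W^{-1,\infty}_{loc}$ and compact in $W^{-1,2}_{loc}$. Murat's lemma then gives $H^{-1}_{loc}$ compactness on compact subsets of $\Omega$, and we take a diagonal exhaustion to cover all of $\Omega$.

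\textbf{Step 3: reduction of the Young measure, the main obstacle.} Reducing the Young measure to a Dirac mass near the sonic state is the hard part. We would invoke the compactness theorem of \cite{CHW-2016}, which treats steady full or isentropic Euler with vorticity and Bernoulli varying across streamlines. There the Bernoulli invariant and the stream function transport are used to pass to a.e. convergence even when the sonic degeneracy appears. Two points need checking: that our Bernoulli data, $u_\infty$ with the upstream densities converging, satisfy its hypotheses, and that the axisymmetric weight is harmless away from $r=0$. Near the axis we use local boundedness of $r\rho\mathbf{U}$, which suffices for distributions in the 3-D sense. This yields $(\rho_n,\mathbf{U}_n)\to(\rho,\mathbf{U})$ a.e. (after a subsequence, which we extend to the whole sequence if the limit is unique; otherwise we state the result along the subsequence).

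\textbf{Step 4: passing to the limit.} We test \eqref{1.1} against $\varphi\in C_c^\infty(\Omega_0)$. Dominated convergence with the uniform $L^\infty$ bounds passes the nonlinear terms to the limit, so the limit satisfies \eqref{1.1} in the sense of distributions. For the boundary condition, each $\rho_n\mathbf{U}_n$ is a bounded divergence-free field with $\rho_n\mathbf{U}_n\cdot\vec n=0$. So for any $\varphi\in C_c^\infty(\mathbb{R}^3)$ we have $\int_{\Omega_0}\rho_n\mathbf{U}_n\cdot\nabla\varphi=0$. This identity passes to the limit and gives $\rho\mathbf{U}\cdot\vec n=0$ in the sense of the normal trace of $L^\infty$ divergence-measure fields, i.e. \eqref{1.2+}.
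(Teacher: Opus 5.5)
Your proposal is correct and follows essentially the paper's route. You verify Mach number $\le 1$, uniform upper and lower bounds on the Bernoulli function (hence on the density), and local compactness of the vorticity, then invoke the compactness framework of \cite{CHW-2016} to get a.e.\ convergence along a subsequence and pass to the limit in the weak formulation and the normal trace. The paper does the same, except that it applies \cite[Theorem 2.2]{CHW-2016} directly to the 3-D field, where $\operatorname{div}(\rho_n\mathbf{U}_n)=0$ exactly and only $\operatorname{curl}\mathbf{U}_n$ needs checking; this makes your entropy-pair and Young-measure framing in Steps 2--3, and your concerns about the axis, unnecessary.
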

	
	\begin{remark}\label{rem3}
		Our results are also valid for global subsonic Euler flows with large vorticity past a bump: $\Omega_{1}=\{(x,y,z)\,|\,x\in \R,\,z\geq 0,\,0\leq \sqrt{y^2+z^2}\leq f(x)\}$ with $f(x)$ given in \eqref{1.2-2}.
	\end{remark}
	
	\begin{remark}\label{rem4}
		In view of \eqref{Ma}, given the upstream axial velocity, the density is greater than a critical value $\rho_{cr}$ if and only if the upstream Mach number $M_{a}^{\infty}$ is less than a critical value $M_{a,cr}^{\infty}$.
	\end{remark}
	
	\begin{remark}\label{rem1}
		The structural condition \eqref{1.3-4} implies that $$\frac{{\rm d}}{{\rm d}r}\big(\frac{u_{\infty}'(r)}{r}\big)\geq 0$$ for $r>0$. This together with \eqref{1.3-3}, which indicates $\lim\limits_{r\to\infty}\frac{u_{\infty}'(r)}{r}=0$, leads to $u_{\infty}'(r)\leq 0$, \textit{i.e.}, the upstream axial velocity is decreasing with respect to $r$. This is crucial for applying the strong maximum principle to show the non-degeneracy of axial velocity; see Lemmas \ref{lem2.3}--\ref{lem2.2} and Subsection 5.3 for details. 
	\end{remark}
	
	\begin{remark}\label{rem5}
		The upstream axial velocity $u_{\infty}(r)$ satisfying \eqref{1.3-3} and \eqref{1.3-4} can have a large vorticity $u_{\infty}'(r)$. For instance, 
		$$
		u_{\infty}(r)=\bar{u}+K(re^{-r}+e^{-r}\big),\quad u_{\infty}'(r)=-Kre^{-r},
		$$
		where $K$ is a large positive constant. Therefore, the Euler flows established in Theorem \ref{thm1} can substantially differ from the potential case \cite{DO-1993}.
	\end{remark}
	
	\begin{remark}\label{rem2}
		In the two-dimensional setting, the study of subsonic Euler flows with large vorticity past a wall \cite[Theorem~1.1]{CDXX-2016} relies crucially on the convexity condition $u_{\infty}''\geq 0$   for the upstream axial velocity. And, for flows past a symmetric body \cite[Corollary~3]{CDXX-2016}, the convexity condition is replaced by the small perturbation condition around a potential flow.
		For three-dimensional axisymmetric flow, our result employs the structural condition \eqref{1.3-4}, which allows the upstream flow to carry large vorticity without being restricted to a small neighborhood of a constant state. Such a structural condition has previously appeared in the study of large vorticity flows in the axisymmetric nozzles \cite{DD-2016}, and also in the non-isentropic case with non-zero swirl \cite{DWX-2018}.
	\end{remark}
	
	We now present an overview of the main components of our strategy and proofs.
	
	(\rmnum{1}) Using the stream function formulation developed in \cite{XX-2010-1}, the steady Euler system is transformed into a single second-order quasilinear equation with source term depending on the stream function, its gradient, and $r$ in the case of subsonic flow, where
	the hyperbolicity of the particle path is already involved. Since $\Omega$ is unbounded in $r$ direction, the stream function may be unbounded as $r\to \infty$, so we first construct the approximated problem in an infinitely long nozzle $\Omega_{L}=\Omega\cap \{(x,r)\,|\,x\in \R,\,\,0\leq r\leq L\}$. As in \cite{DD-2011,DD-2016}, to show the existence of subsonic flow in the nozzle $\Omega_{L}$, not only the subsonic truncation, but also the truncation near the axis of symmetry $r=0$ should be considered to avoid the possible degeneracy at sonic state and singularity at the axis of symmetry. The subsonic truncation can be similarly removed as in \cite{DD-2011,DD-2016} through analyzing the order with respect to $\rho_{\infty}$ of the Mach number.
	To handle the possible singularity, we construct a barrier function near the axis of symmetry. Since the upstream velocity is decreasing, \textit{i.e.}, $u_{\infty}'(r)\leq 0$, and the source term is negative, delicate analysis is needed in constructing the barrier function. Specifically, an auxiliary cubic term in $r$ should be added to the construction of the barrier function; see step 5 in the proof of Lemma \ref{lem2.1} for details.
	
	(\rmnum{2}) To take the limit $L\to \infty$ of the approximate solution in the nozzle $\Omega_{L}$, we need to establish uniform estimates with respect to $L$. As in \cite{CDXX-2016}, we consider the difference between the stream function and the upstream stream function. However, unlike the 2D case \cite{CDXX-2016}, we must take the weight $r$ into account when establishing $L^{\infty}$-estimates, H\"{o}lder gradient estimates, and $L^2$-estimates for this difference. In particular, for the H\"{o}lder gradient estimates, the source terms in the linear elliptic equation for this difference do not have the desired decay rate with respect to $r$ near the axis. Therefore, when $r$ is near the axis, we investigate the stream function itself rather than the difference and apply Moser's iteration to obtain the gradient estimate; see Lemma \ref{lem2} for details. Similar considerations are also employed in the proof of the uniqueness of subsonic Euler flows; see Subsection 5.3 for details. 
	
	(\rmnum{3}) To show that the stream function formulation is equivalent to the original Euler equation, one of the most important points is to show that the axial velocity is positive away from the axis. The proof of such non-degeneracy is quite different from the 2D case \cite{CDXX-2016}. In two dimensions, the axial velocity can be recovered as $u=\frac{\partial_{x_{2}}\psi}{\rho}$ with no possible singularity, where $\psi$ is the stream function and $\rho$ the density. Thus, the non-degeneracy of the axial velocity can be easily proved by applying the partial derivative $\pa_{x_{2}}$ to the stream function equation and using energy estimates, the strong maximum principle, the structural condition, and the far-field asymptotic behavior. However, in three dimensions under the axisymmetric setting, the axial velocity $u=\frac{\pa_{r}\psi}{r\rho}$, so the non-degeneracy of the axial velocity is not immediately evident from the equation for $\psi_{r}$, since $\psi_{r}=r\rho u$ itself is degenerate at the axis. In fact, the coefficients in the equation for $\psi$ depend on $r$, and the associated term becomes uncontrolled after applying the partial derivative $\partial_{r}$ in the energy estimate. Motivated by \cite{DWX-2018}, to overcome this issue, we first consider the irrotational case with coefficient $\frac{1}{r}$ truncated as $\frac{1}{r+k}$, so that the associated $\pa_{r}\psi$ can be proved non-degenerate everywhere, including the axis, through standard analysis of irrotational flows as in \cite{XX-2010-1}. Then, we consider the upstream flow as a small perturbation of the irrotational case, so that the associated $\partial_{r}\psi$ is non-degenerate everywhere provided the perturbation is sufficiently small. Finally, by applying an elegant continuity argument together with a proof by contraction and the intrinsic strong maximum principle without requiring the sign of the zeroth term, we succeed in extending the case of a small perturbation around the potential flow to the case of any large perturbation. Here, it is also worth pointing out that the decreasing property of the upstream axial velocity is essential for the validity of the strong maximum principle without requiring the sign of the zeroth term for the equation of $\partial_{r}\psi$; see Lemma \ref{lem2.3} and Subsection 5.2 for details.
	
	The rest of this paper is organized as follows. We first adapt the stream function formulation to reduce the steady Euler equations to a single second-order quasilinear elliptic equation with a source term depending on the stream function, its gradient, and $r$ in Section 2. The existence of a subsonic solution to the approximated problem in the nozzle $\Omega_{L}$ is established in Section 3. Subsequently, uniform estimates on the difference between the stream function and the upstream stream function with respect to $L$ are shown in Section 4. The existence of a subsonic solution in the exterior domain $\Omega$, along with fine properties, including the asymptotic behaviors in the far fields, the positivity of axial velocity away from the axis, the uniqueness of the solution, and the existence of critical density in the upstream, will be given in Section 5. In Section 6, we will use the compensated compactness framework developed in \cite{CHW-2016} to prove Theorem \ref{thm2}.
	
	{\it Notation}: We denote by $C^{k,\alpha}(\Omega)$  the standard H\"{o}lder space on $\Omega$, and $L^{p}(\Omega)$ with $1\leq p\leq \infty$ the standard $L^p$ space on $\Omega$. The constant $C$ denotes a constant independent of $L$
	and the elliptic coefficients. The constant $\mathscr{C}$ denotes a constant which is independent of $L$ but depends on the elliptic coefficients, and $\mathcal{C}$ is a constant depending on both $L$ and
	the elliptic coefficients. The value of these constants may vary from line to line, but they keep the same property. $A\sim O(\rho_{\infty})$ means $\frac{1}{C}\rho_{\infty}\leq A\leq C\rho_{\infty}$ for some constant $C$ independent of $L$ and the elliptic coefficients.

	\section{Stream Function Formulation}
	
	In this section, we will adapt the stream function formulation to the steady Euler system \eqref{1.3}. It follows from $\eqref{1.3}_{2}$--$\eqref{1.3}_{3}$ that
	\begin{align}\label{1.4-0}
		\rho u\partial_{x}B+\rho v\partial_{r}B=0,
	\end{align}
	where
	\begin{align}\label{1.4}
		B=\frac{1}{2}q^2+h(\rho)=:\frac{1}{2}(u^2+v^2)+\frac{1}{\gamma-1}\rho^{\gamma-1}
	\end{align}
	is the Bernoulli function, and $h(\rho)=\frac{1}{\gamma-1}\rho^{\gamma-1}$ is the enthalpy. Furthermore, denoting the vorticity $\omega=\partial_{x}v-\partial_{r}u$,  it follows from direct calculations and $\eqref{1.3}$ that
	\begin{align}\label{1.4-1}
		u\partial_{x}\Big(\frac{\omega}{\rho r}\Big)+v\partial_{r}\Big(\frac{w}{\rho r}\Big)=0.
	\end{align}
	Using similar arguments as in \cite[Proposition 2.1]{CDXX-2016}, we have the following proposition. 
	\begin{proposition}\label{Equivlence}
		Suppose that $(\rho,u,v)\in (C^{1,\alpha}(\Omega)\cap C^{\alpha}(\bar{\Omega}))^3$ satisfies the boundary condition \eqref{1.3-1}, and
		\begin{align}\label{E1}
			\sup\limits_{(x,r)\in \Omega}(\rho+|u|+|v|+|\nabla u|+|\nabla v|)<\infty,
			\quad 
			\rho>0,\text{ and }u>0\text{ in $\Omega$}.
		\end{align}
		Furthermore, assume that
		\begin{align}\label{E2}
			\lim\limits_{x\to \pm\infty}(v,|\nabla v|,\partial_{r}\rho)=0, 
			\quad u\geq \delta\text{ if $r>0$, and $\sqrt{x^2+r^2}\geq R$},
		\end{align}
		for some positive constants $\delta$ and $R$. Then $(\rho,u,v)$ is a solution of the Euler system \eqref{1.3} if and only if $(\rho,u,v)$ solves the continuity equation $\eqref{1.3}_{1}$, \eqref{1.4-0} and \eqref{1.4-1}.
	\end{proposition}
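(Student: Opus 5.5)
The forward direction is a direct computation. Suppose $(\rho,u,v)$ solves \eqref{1.3}. Expanding $\eqref{1.3}_2$--$\eqref{1.3}_3$ with the help of $\eqref{1.3}_1$ gives the nonconservative momentum equations
\begin{align*}
\rho(u\partial_x u+v\partial_r u)+\partial_x P=0,\qquad \rho(u\partial_x v+v\partial_r v)+\partial_r P=0 .
\end{align*}
Taking $u$ times the first plus $v$ times the second, and using $\nabla P=\rho\nabla h$, yields \eqref{1.4-0}. The equations can also be written as
\begin{align*}
\partial_x B - v\,\omega=0,\qquad \partial_r B+u\,\omega=0 .
\end{align*}
Cross-differentiating these and inserting $\eqref{1.3}_1$, in the form $\partial_x(\rho u)+\partial_r(\rho v)=-\rho v/r$, gives $\operatorname{div}(\mathbf{u}\,\omega)=0$ for $\mathbf{u}=(u,v)$. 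Combining this with the continuity equation produces the transport equation \eqref{1.4-1} for $\omega/(\rho r)$. This is a routine identity; I would only verify the factor $r$ carefully.

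The converse is the substantive direction. Assume $\eqref{1.3}_1$, \eqref{1.4-0} and \eqref{1.4-1} hold, and set
\begin{align*}
E_1:=\partial_x B - v\omega,\qquad E_2:=\partial_r B+u\omega .
\end{align*}
Since $\rho>0$, the momentum equations are equivalent to $E_1=E_2=0$. From \eqref{1.4-0} we have $uE_1+vE_2=0$. Because $u>0$ in $\Omega$, this gives $E_1=-vE_2/u$, so it suffices to show $E_2\equiv0$.

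The key step is to derive a transport equation for $E_2$. Using $\eqref{1.3}_1$ and \eqref{1.4-1} (equivalently $\operatorname{div}(\mathbf{u}\omega)=0$ after unweighting), I would compute $\partial_x E_2-\partial_r E_1$. Substituting $E_1=-vE_2/u$ should give a first-order linear equation of the form
\begin{align*}
\partial_x E_2+\partial_r\Big(\frac{v}{u}E_2\Big)=0,
\end{align*}
possibly with an extra zero-order term proportional to $E_2/r$. After multiplying by a suitable weight $r^k\rho u$, I expect this to become
\begin{align*}
(\rho u\,\partial_x+\rho v\,\partial_r)\Big(\frac{E_2}{\rho u r^{k}}\Big)=0
\end{align*}
for a suitable power $k$. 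In other words, a weighted version of $E_2$ is constant along the particle paths. This matches the 2D argument of \cite[Proposition 2.1]{CDXX-2016}, with $r$-weights added. Getting the exact weight is a bookkeeping matter.

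Next I would introduce the stream function $\psi$, with $\partial_r\psi=r\rho u$ and $\partial_x\psi=-r\rho v$. It is well defined because $\Omega$ is simply connected in the $(x,r)$ half-plane and because of $\eqref{1.3}_1$ together with the boundary condition \eqref{1.3-1}, which makes $\Gamma$ a level set. Since $u>0$, $\psi$ is strictly increasing in $r$ for $r>0$. Hence every streamline $\{\psi=\text{const}\}$ is a graph $r=g(x)$ defined for all $x\in\R$, and it extends to $x\to-\infty$. This step uses \eqref{E2}: the bound $u\ge\delta$ far out, together with the global gradient bounds in \eqref{E1}, keeps the slope $v/u$ bounded, so streamlines neither escape to $r=\infty$ nor hit the axis in finite $x$.

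On each streamline the weighted $E_2$ is constant, so it suffices to show it tends to $0$ as $x\to-\infty$. In that limit, \eqref{E2} gives $v\to0$, $\nabla v\to0$ and $\partial_r\rho\to0$. Then
\begin{align*}
\partial_r B+u\omega=u\partial_r u+v\partial_r v+h'(\rho)\partial_r\rho+u(\partial_x v-\partial_r u)\to0,
\end{align*}
because the $u\partial_r u$ terms cancel. The weight $1/(\rho u r^k)$ remains bounded along the streamline since $u\ge\delta$, $\rho$ is bounded below, and $r$ stays in a compact subset of $(0,\infty)$ along a fixed streamline by the gradient bounds. I would also check $\rho\ge c>0$ from \eqref{E1} or the Bernoulli bound. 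It follows that $E_2\equiv0$, hence $E_1\equiv0$, and the Euler system holds.

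I expect the main obstacle to be the geometric part of the argument. Every point of $\Omega$ must lie on a streamline reaching $x=-\infty$ with the limits \eqref{E2} holding uniformly along it. Near the axis $r=0$ the weight $r^{-k}$ and the vanishing of $\partial_r\psi$ could cause trouble. I would handle $r>0$ first and then obtain the equations on the axis by continuity, since $(\rho,u,v)\in C^{1,\alpha}$.
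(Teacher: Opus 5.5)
Your argument is correct and is, up to packaging, the streamline argument the paper invokes via \cite[Proposition~2.1]{CDXX-2016} and the discussion following Proposition~\ref{Equivlence}. There one writes $B=\mathcal{B}(\psi)$ and $\omega/(r\rho)=\mathcal{W}(\psi)$ on streamlines issuing from $x=-\infty$, so that $E_2=r\rho u\,(\mathcal{B}'+\mathcal{W})(\psi)$, and the upstream limit forces $\mathcal{B}'+\mathcal{W}=0$. This is exactly your transported quantity with $k=1$: since $\partial_xE_2-\partial_rE_1=\partial_x(u\omega)+\partial_r(v\omega)=0$, you get $\partial_xE_2+\partial_r(vE_2/u)=0$ with no extra zero-order term, i.e.\ $(u\partial_x+v\partial_r)\big(E_2/(r\rho u)\big)=0$.

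Two small caveats. The stream-function form never differentiates $E_1,E_2$, which are only $C^{\alpha}$ here, so your cross-differentiation should be read in $\mathcal{D}'$. Also, both versions need $\inf\rho>0$ along streamlines far upstream. \eqref{E1} as stated does not give this, but it holds in the application since $\rho\to\rho_\infty>0$.
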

	
	\smallskip
	
	In view of the continuity equation $\eqref{1.3}_{1}$, we can introduce the stream function $\psi(x,r)$, which is determined by
	$$
	\partial_{x}\psi=-r\rho v,\quad \partial_{r}\psi=r\rho u.
	$$
	It follows from the far-field condition \eqref{1.2} that the stream function in the upstream can be described as
	\begin{align}\label{1.6}
		\psi_{\infty}(r)=:\psi(-\infty,r)=\rho_{\infty}\int_{0}^{r}u_{\infty}(s)s\,{\rm d}s\quad \text{for $r\geq 0$}.
	\end{align}
	Similar to \cite{CDXX-2016}, it is easy to check the stream line passing through the point $(x,r)$ with $r>f(x)$ will never touch the boundary $\Gamma$ and go to infinity in the $r-$direction with finite $x$ provided \eqref{E1}--\eqref{E2} hold. Therefore, for any point $(x,r)$ with $r>f(x)$, along the stream line, we can find a position $\kappa>0$ at the upstream such that
	$$
	\psi(x,r)=\psi_{\infty}(\kappa)=\rho_{\infty}\int_{0}^{\kappa}u_{\infty}(s)s\,{\rm d}s,
	$$
	which, together with the fact that $\rho_{\infty}u_{\infty}>0$, yields that $\psi$ is a increasing function of $\kappa$. Thus, we can represent $\kappa=\kappa(\psi;\rho_{\infty})$ as a function $\psi$, and one has
	\begin{align}\label{1.6-1}
		\psi=\rho_{\infty}\int_{0}^{\kappa(\psi;\rho_{\infty})}u_{\infty}(s)s\,{\rm d}s.
	\end{align}
	See Figure \ref{fig3}.
	
	\begin{figure}[htbp]
		\centering
		\begin{tikzpicture}[scale=2.5]
			\draw[dashed] (-1,0) -- (0,0);
			\draw[dashed] (1,0) -- (2,0);
			
			\draw[thick,black,domain=0.001:0.999,samples=200]
			plot (\x,{8*exp(-1/((\x+0.07)*(1.07-\x)))});
			\draw[thick,black,domain=0.001:0.999,samples=200]
			plot (\x,{-8*exp(-1/((\x+0.07)*(1.07-\x)))});
			
			\foreach \x in {0.5}
			{
				\pgfmathsetmacro{\rad}{-8*exp(-1/((\x+0.07)*(1.07-\x)))}
				\pgfmathsetmacro{\xr}{0.05}
				
				\begin{scope}
					\clip (\x-\xr,-\rad) rectangle (\x, \rad);
					\draw[dashed]
					(\x,0) ellipse[x radius=\xr, y radius=\rad];
				\end{scope}
				
				\begin{scope}
					\clip (\x, -\rad) rectangle (\x+\xr, \rad);
					\draw[thick]
					(\x,0) ellipse[x radius=\xr, y radius=\rad];
				\end{scope}
			}
			
			
			\draw[->,thick] (-1,0.4) to[out=20, in=188] (2,0.5);
			
			\fill (-1,0.4) circle (0.5pt);
			
			\fill (2,0.5) circle (0.5pt);
			
			\node[above] at (-1,0.45) {$(-\infty,\kappa(\psi;\rho_{\infty}))$};
			
			\node[above] at (2,0.5) {$(x,r)$}; 
			
		\end{tikzpicture}
		\caption{The schematic of topological structure of streamline}
		\label{fig3}
	\end{figure}
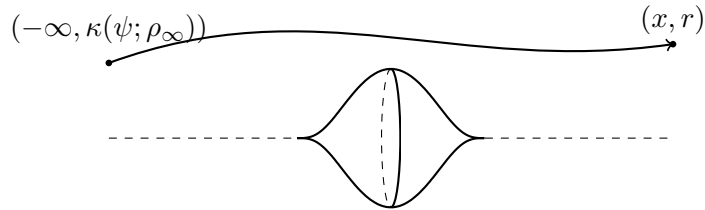
	
	Moreover, one has from \eqref{1.4-0} and \eqref{1.4-1} that $B$ and $\omega$ are both constants along the streamline, that is, the Bernoulli function $B$ in \eqref{1.4} can be determined by
	\begin{align}\label{1.7}
		\frac{1}{2}q^2+h(\rho)=\frac{1}{2}\frac{|\nabla \psi|^2}{\rho^2 r^2}+h(\rho)=\frac{1}{2}u_{\infty}^2(\kappa(\psi;\rho_{\infty}))+h(\rho_{\infty})=B(\psi;\rho_{\infty})
	\end{align}
	and $\omega$ can be determined by
	\begin{align}\label{1.9}
		\frac{\omega}{r\rho}=\frac{-u_{\infty}'(\kappa(\psi;\rho_{0}))}{\rho_{\infty}\kappa(\psi;\rho_{0})}\Rightarrow \omega=-\frac{r\rho u_{\infty}'(\kappa(\psi;\rho_{\infty}))}{\rho_{\infty}\kappa(\psi;\rho_{\infty})}.
	\end{align}
	Set $\mathcal{M}(\psi)=\frac{|\nabla \psi|^2}{r^2}$, then the relationship \eqref{1.7} becomes
	\begin{align}\label{1.8}
		\frac{\mathcal{M}}{2\rho^2}+h(\rho)=B(\psi;\rho_{\infty}).
	\end{align}
	
	\smallskip
	
	For any given $\mathfrak{s}$, it is clear that there exist unique $\rho_{*}(\mathfrak{s})$ and $\rho^{*}(\mathfrak{s})$ such that
	\begin{align}\label{1.8-1}
		\frac{1}{2}\rho_{*}^{\gamma-1}(\mathfrak{s})+h(\rho_{*}(\mathfrak{s}))=\mathfrak{s}\quad \text{ and }\quad h(\rho^{*}(\mathfrak{s}))=\mathfrak{s}.
	\end{align}
	Define
	\begin{align}\label{1.8-2}
		\Sigma(\mathfrak{s})=\rho_{*}^{\gamma+1}(\mathfrak{s}),
	\end{align}
	which is the maximum value of $\mathcal{M}$ associated with sonic state in the subsonic branch. In fact, it is easy to check from 
	\begin{align}\label{1.8-3}
		\frac{\mathcal{M}}{2\rho^2}+h(\rho)=\mathfrak{s}
	\end{align}
	that for any given $\mathfrak{s}$, $\mathcal{M}$ is a strictly decreasing function of $\rho$ for $\rho\in [\rho_{*}(\mathfrak{s}),\rho^{*}(\mathfrak{s})]$. Therefore, for given $\mathfrak{s}$ and $\mathcal{M}$, one can find a unique $\rho\in [\rho_{*}(\mathfrak{s}),\rho^{*}(\mathfrak{s})]$ satisfying \eqref{1.8-3}, see Figure \ref{fig2}.
	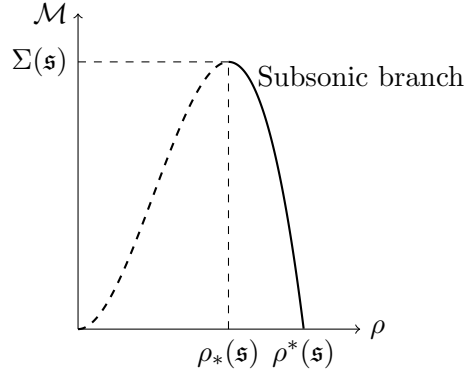
\begin{figure}[htbp]
		\centering
		\begin{tikzpicture}[scale=1.5]
			
			\draw[->] (0,0) -- (2.5,0) node[right] {$\rho$};
			\draw[->] (0,0) -- (0,2.8) node[left] {$\mathcal{M}$};
			
			\def\xcrit{4/3}
			
			\draw[thick,dashed,domain=0:\xcrit,samples=100]
			plot (\x,{4*(\x)^2 - 2*(\x)^3});
			
			\draw[thick,domain=\xcrit:2,samples=100]
			plot (\x,{4*(\x)^2 - 2*(\x)^3});
			
			\draw[dashed] (\xcrit,0) -- (\xcrit,{4*(\xcrit)^2 - 2*(\xcrit)^3});
			\draw[dashed] (0,{4*(\xcrit)^2 - 2*(\xcrit)^3}) -- (\xcrit,{4*(\xcrit)^2 - 2*(\xcrit)^3});

			\node[below] at (4/3,0) {$\rho_{*}(\mathfrak{s})$};
			
			\node[below] at (2,0) {$\rho^{*}(\mathfrak{s})$};
			
			\node[left] at (0,64/27) {$\Sigma(\mathfrak{s})$};
			
			\node[right] at (3/2,{4*(3/2)^2-2*(3/2)^3}) {\text{Subsonic branch}};
			
		\end{tikzpicture}
		\caption{The relationship between $\cal{M}$ and $\rho$}
		\label{fig2}
	\end{figure}
	
	In particular, it follows from \eqref{1.8} that the density is a function of $\mathcal{M}$ and $\psi$, denoted as
	$$
	\rho=H(\mathcal{M},B(\psi;\rho_{\infty}))=:H(\mathcal{M},\psi;\rho_{\infty}).
	$$
	Then, we obtain from \eqref{1.9} that
	\begin{align}\label{1.9-1}
		\omega=-\frac{rH(\mathcal{M},\psi;\rho_{\infty})u_{\infty}'(\kappa(\psi;\rho_{\infty}))}{\rho_{\infty}\kappa(\psi;\rho_{\infty})}.
	\end{align}
	Noting the identity
	$$
	\operatorname{div}\big(\frac{\nabla\psi}{\rho r}\big)=-\omega,
	$$
	we have
	\begin{align}\label{1.10}
		\operatorname{div}\Big(\frac{\nabla\psi}{rH(\mathcal{M},\psi;\rho_{\infty})}\Big)=\frac{rH(\mathcal{M},\psi;\rho_{\infty}) u_{\infty}'(\kappa(\psi;\rho_{\infty}))}{\rho_{\infty}\kappa(\psi;\rho_{\infty})}.
	\end{align}
	A direct calculation from \eqref{1.6-1} shows that
	$$
	\frac{d}{d\psi}\kappa(\psi;\rho_{\infty})=\frac{1}{\rho_{\infty}\kappa(\psi;\rho_{\infty})u_{\infty}(\kappa(\psi;\rho_{\infty}))}.
	$$
	Denoting $\Theta(\psi;\rho_{\infty})=u_{\infty}(\kappa(\psi;\rho_{\infty}))$, we can rewrite \eqref{1.10} as
	\begin{align}\label{1.11}
		\operatorname{div}\Big(\frac{\nabla\psi}{rH(\mathcal{M},\psi;\rho_{\infty})}\Big)=rH(\mathcal{M},\psi;\rho_{\infty})\Theta(\psi;\rho_{\infty})\Theta'(\psi;\rho_{\infty}).
	\end{align}
	Here $\Theta'(\psi;\rho_{\infty})=\frac{d}{d\psi}\Theta(\psi;\rho_{\infty})$.
	
	The boundary condition \eqref{1.3-1} implies that $\psi$ is constant on the boundary $\Gamma$. Noting  $\psi_{\infty}(0)=0$, we impose the following Dirichlet boundary condition for \eqref{1.11}:
	\begin{align}\label{1.12}
		\psi=0\qquad \text{on $\Gamma$}.
	\end{align}
	By direct calculations, one has
	\begin{align}\label{1.12-1}
		H_{1}=:\frac{\partial H(\mathcal{M},\psi;\rho_{\infty})}{\partial \mathcal{M}}=-\frac{H}{2(H^{\gamma+1}-\mathcal{M})},\quad H_{2}=:\frac{\partial H(\mathcal{M},\psi;\rho_{\infty})}{\partial \psi}=\frac{\Theta\Theta'H^3}{H^{\gamma+1}-\mathcal{M}}.	
	\end{align}
	Then we can rewrite \eqref{1.11}--\eqref{1.12} in the following non-divergence form:
	\begin{align}\label{1.13}
		\left\{
		\begin{aligned}
			&\Big(\Big(1-\frac{\mathcal{M}}{H^{\gamma+1}}\Big)\delta_{ij}+\frac{1}{H^{\gamma+1}}\frac{\psi_{i}}{r}\frac{\psi_{j}}{r}\Big)\partial_{ij}\psi-\frac{\psi_{2}}{r}=r^2\Theta\Theta'H^{2},\\
			&\psi=0\qquad \text{on }\Gamma=\{(x,r)\,|\,r=f(x)\},
		\end{aligned}
		\right.
	\end{align}
	where we have denoted $(\psi_{1},\psi_{2})=(\partial_{1}\psi,\partial_{2}\psi)=:(\psi_{x},\psi_{r})$ and have used the Einstein summation convention. Let
	$$
	A_{ij}=\Big(1-\frac{\mathcal{M}}{H^{\gamma+1}}\Big)\delta_{ij}+\frac{1}{H^{\gamma+1}}\frac{\psi_{i}}{r}\frac{\psi_{j}}{r}.
	$$
	Then \eqref{1.13} becomes
	\begin{align}\label{1.13-1}
		A_{ij}\partial_{ij}\psi=r^2\Theta\Theta'H^2+\frac{\psi_{2}}{r}.
	\end{align}
	A straightforward computation gives that the eigenvalues of the matrix $A_{ij}$ are
	$$
	\lambda=\frac{H^{\gamma+1}-\mathcal{M}}{H^{\gamma+1}},\quad \Lambda=1.
	$$
	Thus, the ratio $\frac{\Lambda}{\lambda}=\frac{H^{\gamma+1}}{H^{\gamma+1}-\mathcal{M}}$ is not uniformly bounded as the flow reaches the sonic state, which implies the elliptic equation \eqref{1.13-1} will degenerate at the sonic state. 
	
	\section{Existence of Subsonic Solution of \eqref{1.13} in a Nozzle}
	The problem \eqref{1.13} for subsonic flows is a Dirichlet problem for a quasilinear elliptic equation in a domain $\Omega$. Since $\Omega$ is unbounded in both $x$ and $r$ directions, the stream function may become an unbounded function, which is one of the main differences from the problem of subsonic flows in infinitely long axisymmetric nozzles \cite{DD-2011,DD-2016}. Motivated by \cite{CDXX-2016}, to overcome this difficulty, we first establish the approximated problems in some infinitely long nozzles and then obtain the existence of the subsonic solution in $\Omega$ by some uniform estimates for the approximated solutions.
	
	Let $J=\sup f(x)$. For any given $L\in \mathbb{N}$ satisfying $L>J$, we denote
	$$
	\Omega_{L}=\{(x,r)\,|\,x\in \R,\,f(x)<r<L\}\text{ and }\partial\Omega_{L}=\Gamma\cup \Gamma_{L},
	$$
	with
	$$
	\Gamma_{L}=\{(x,r)\,|\,r=L\}.
	$$
	Noting that $u_{\infty}'(r)$ may not vanish on $\Gamma_{L}$, we need to make some truncation for $u_{\infty}(r)$. Let
	\begin{align}\label{2.1}
		g_{L}(r)=\left\{
		\begin{aligned}
			&u_{\infty}'(r),\qquad\qquad\quad\,\,\,\,\,\,\text{if }r\leq L-1,\\
			&(L-r)u_{\infty}'(L-1),\quad \text{if }L-1<r\leq L,
		\end{aligned}
		\right.
	\end{align}
	and 
	$$
	u_{\infty,L}(r)=u_{\infty}(0)+\int_{0}^{r}g_{L}(s)\,{\rm d}s.
	$$ 
	It is easy to check that for $r\in [0,L]$,
	$$
	u_{\infty,L}(r)\geq u_{\infty}(L-1)+\frac{u_{\infty}'(L-1)}{2}.
	$$
	If $L$ is sufficiently large, we obtain from $\lim\limits_{r\to \infty}u_{\infty}(r)=\bar{u}$ that $u_{\infty,L}\geq \frac{\bar{u}}{2}>0$ for all $r\in [0,L]$. Furthermore, $u_{\infty,L}(r)$ satisfies $u_{\infty,L}'(L)=g_{L}(L)=0$ and
	\begin{align}\label{2.3}
		u_{\infty,L}''(r)=g_{L}'(r)=\left\{
		\begin{aligned}
			&u_{\infty}''(r),\qquad\quad\quad\, \text{if }r\leq L-1,\\
			&-u_{\infty}'(L-1),\quad \text{if }L-1\leq r\leq L.
		\end{aligned}
		\right.
	\end{align}
	Noting $\eqref{1.3-4}$ and $u_{\infty}'(r)\leq 0$, we have $u_{\infty,L}''(r)r\geq u_{\infty,L}'(r)$ for $r\in [0,L]$.
	
	Let $\kappa_{L}(\psi;\rho_{\infty})$ be determined by
	\begin{align}\label{2.4}
		\psi=\rho_{\infty}\int_{0}^{\kappa_{L}(\psi;\rho_{\infty})}u_{\infty,L}(s)s\,{\rm d}s.
	\end{align}
	Denote
	\begin{align*}
		&m_{L}=\rho_{\infty}\int_{0}^{L}u_{\infty,L}(s)s\,{\rm d}s,\quad \theta_{L}(\psi;\rho_{\infty})=u_{\infty,L}(\kappa_{L}(\psi;\rho_{\infty})),
	\end{align*}
	and
	\begin{align}\label{2.6}
		&B_{L}(\psi;\rho_{\infty})=h(\rho_{\infty})+\frac{1}{2}\theta_{L}^2(\psi;\rho_{\infty}),\,\,H_{L}(\mathcal{M},\psi;\rho_{\infty})=H(\mathcal{M},B_{L}(\psi;\rho_{\infty})).
	\end{align}
	Then we consider the problem
	\begin{align}\label{2.7}
		\left\{\begin{aligned}
			&\operatorname{div}\Big(\frac{\nabla \psi}{rH_{L}(\mathcal{M},\psi;\rho_{\infty})}\Big)=rH_{L}(\mathcal{M},\psi;\rho_{\infty})\theta_{L}(\psi;\rho_{\infty})\theta_{L}'(\psi;\rho_{\infty}),\\
			&\psi=0\quad \text{on }\Gamma,\quad \psi=m_{L}\quad \text{on }\Gamma_{L}.
		\end{aligned}
		\right.
	\end{align}
	
	By direct calculations, we can rewrite \eqref{2.9} in the non-divergence form:
	\begin{align}\label{2.9}
		\left\{
		\begin{aligned}
			&\Big(\Big(1-\frac{\mathcal{M}}{H_{L}^{\gamma+1}}\Big)\delta_{ij}+\frac{1}{H_{L}^{\gamma+1}}\frac{\psi_{i}}{r}\frac{\psi_{j}}{r}\Big)\partial_{ij}\psi-\frac{\psi_{2}}{r}=r^2\theta_{L}\theta_{L}'{H}_{L}^{2},\\
			&\psi=0\quad \text{on }\Gamma,\quad \psi=m_{L}\quad \text{on }\Gamma_{L}.
		\end{aligned}
		\right.
	\end{align}
	To establish the existence of problem \eqref{2.9}, we need to construct a sequence of auxiliary regular problems and establish some corresponding uniform estimates to approximate \eqref{2.9}.
	
	\smallskip
	
	\textbf{Step 1. Extension of $\theta_{L}$ and $H_{L}$.} First note that the function $H_{L}(\mathcal{M},\psi;\rho_{\infty})$ is not well defined when $\mathcal{M}$ and $\psi$ are larger than some values, we need to make some extensions on $\psi$.
	
	We first notice that
	$$
	\theta_{L}'(\psi;\rho_{\infty})=\frac{u_{\infty,L}'(\kappa_{L}(\psi;\rho_{\infty}))}{\rho_{\infty}\kappa_{L}(\psi;\rho_{\infty})u_{\infty,L}(\kappa_{L}(\psi;\rho_{\infty}))},
	$$
	which, together with the fact $u_{\infty}'(r)\leq 0$, implies that
	\begin{align}\label{2.10}
		\theta_{L}'(\psi;\rho_{\infty})\leq 0,\quad \theta_{L}'(0;\rho_{\infty})\leq 0\quad \text{ and }\quad \theta_{L}'(m_{L};\rho_{\infty})=0.
	\end{align}
	Then we extend $\theta_{L}$ to $F_{L}$ as follows:
	\begin{align}\label{2.11}
		F_{L}(s;\rho_{\infty})=\left\{
		\begin{aligned}
			&\theta_{L}(s;\rho_{\infty}),\qquad\qquad \qquad \qquad\qquad  \text{if }s\in [0,m_{L}],\\
			&\theta_{L}(m_{L};\rho_{\infty}),\qquad\qquad \qquad \qquad\quad \text{if }s>m_{L},\\
			&\theta_{L}(0;\rho_{\infty})+\theta_{L}'(0;\rho_{\infty})\big(s+\frac{s^2}{2}\big),\quad \text{if }-1\leq s<0,\\
			&\theta_{L}(0;\rho_{\infty})-\frac{\theta_{L}'(0;\rho_{\infty})}{2},\qquad\qquad\,\,\,\,  \text{if }s<-1.
		\end{aligned}
		\right.	
	\end{align}
	It is easy to check that $F_{L}$ is a $C^2$ function on $\R$. Denote
	\begin{align}\label{2.12}
		\check{B}_{L}(\psi;\rho_{\infty})=h(\rho_{\infty})+\frac{1}{2}F_{L}^2(\psi;\rho_{\infty}).
	\end{align}
	
	\textbf{Step 2. Subsonic truncation.} In order to deal with the possible degeneracy at sonic state in \eqref{2.9}, we need to truncate the associated equation near the sonic state. For any $\varepsilon_{0}\in (0,\frac{1}{4})$, let $\chi_{0}(s)$ be a smooth increasing function satisfying
	\begin{align}\label{2.13}
		\chi_{0}(s)=\left\{
		\begin{aligned}
			&s,\qquad\quad\,\,\,\,\,\text{if }s\leq 1-2\varepsilon_{0},\\
			&1-\frac{3}{2}\varepsilon_{0},\quad \text{if }s\geq 1-\varepsilon_{0}.
		\end{aligned}
		\right.
	\end{align}
	Define
	$$
	\check{\mathcal{M}}(\mathcal{M},\psi;\rho_{\infty})=\chi_{0}^2\Big(\frac{\sqrt{\mathcal{M}}}{H^{\frac{\gamma+1}{2}}(\mathcal{M},\check{B}_{L}(\psi;\rho_{\infty}))}\Big)H^{\gamma+1}(\mathcal{M},\check{B}_{L}(\psi;\rho_{\infty})).
	$$
	The corresponding truncated density $\check{H}_{L}(\mathcal{M},\psi;\rho_{\infty})$ is determined by 
	$$
	\frac{\check{\mathcal{M}}}{2\check{H}_{L}^2(\mathcal{M},\psi;\rho_{\infty})}+\frac{1}{\gamma-1}\check{H}_{L}^{\gamma-1}(\mathcal{M},\psi;\rho_{\infty})=\check{B}_{L}(\psi;\rho_{\infty}),
	$$
	that is,
	$$
	\check{H}_{L}(\mathcal{M},\psi;\rho_{\infty})=H(\check{\mathcal{M}},\check{B}_{L}(\psi;\rho_{\infty})).
	$$
	
	{\textbf{Step 3. Truncation of singularity on the axis}}. Due to the singularity at $r=0$, we consider the following approximate truncated problem
	\begin{align}\label{2.9-1}
		\left\{
		\begin{aligned}
			&\left(\left(1-\chi_{0}^2\left(\frac{|\nabla\psi|}{(r+k)(\check{H}_{L}^{k})^{\frac{\gamma+1}{2}}}\right)\right)\delta_{ij}+\chi_{0}\left(\frac{\partial_{i}\psi}{(r+k)(\check{H}_{L}^{k})^{\frac{\gamma+1}{2}}}\right)\chi_{0}\left(\frac{\partial_{j}\psi}{(r+k)(\check{H}_{L}^{k})^{\frac{\gamma+1}{2}}}\right)\right)\partial_{ij}\psi\\
			&\qquad =(r+k)^2F_{L}F_{L}'\big(\check{H}_{L}^{k}\big)^2+\frac{\psi_{2}}{r+k},\\
			&\psi=0\quad \text{on }\Gamma,\quad \psi=m_{L}\quad \text{on }\Gamma_{L},
		\end{aligned}
		\right.
	\end{align}
	for any $k>0$. Here
	$$
	\check{H}_{L}^{k}(\mathcal{M}_{k},\psi;\rho_{\infty})=H(\check{\mathcal{M}}_{k},\check{B}_{L}(\psi;\rho_{\infty})),
	$$
	with
	$$
	\check{\mathcal{M}}_{k}(\mathcal{M}_{k},\psi;\rho_{\infty})=\chi_{0}^2\Big(\frac{\sqrt{\mathcal{M}}_{k}}{H^{\frac{\gamma+1}{2}}(\mathcal{M}_{k},\check{B}_{L}(\psi;\rho_{\infty}))}\Big)H^{\frac{\gamma+1}{2}}(\mathcal{M}_{k},\check{B}_{L}(\psi;\rho_{\infty})),\,\,\text{and }\mathcal{M}_{k}=\frac{|\nabla\psi|^2}{(r+k)^2}.
	$$ 
	We rewrite \eqref{2.9-1} as 
	\begin{align}\label{2.9-2}
		\check{A}_{ij}^{k}\partial_{ij}\psi=\check{F}^{k}+\check{G}^{k},
	\end{align}
	where
	\begin{align*}
		\check{A}_{ij}^{k}=\left(\left(1-\chi_{0}^2\left(\frac{|\nabla\psi|}{(r+k)(\check{H}_{L}^{k})^{\frac{\gamma+1}{2}}}\right)\right)\delta_{ij}+\chi_{0}\left(\frac{\partial_{i}\psi}{(r+k)(\check{H}_{L}^{k})^{\frac{\gamma+1}{2}}}\right)\chi_{0}\left(\frac{\partial_{j}\psi}{(r+k)(\check{H}_{L}^{k})^{\frac{\gamma+1}{2}}}\right)\right),
	\end{align*}
	and
	\begin{align*}
		\check{F}^{k}=(r+k)^2F_{L}F_{L}'\big(\check{H}_{L}^{k}\big)^2,\quad \check{G}^{k}=\frac{\psi_{2}}{r+k}.
	\end{align*}
	It is easy to check \eqref{2.9-2} is a uniformly elliptic equation for any $(x,r)\in \Omega_{L}$, and $\check{F}^{k}+\check{G}^{k}$ satisfy the structural conditions in \cite[Theorem 12.5]{GT}.

\begin{lemma}[Existence of \eqref{2.9-1}]\label{lem2.1}
For any $\rho_{\infty}>\rho_{\infty}^{*}$, there exists a solution $\psi_{L}^{k}\in C^{2,\beta}(\Omega_{L})\cap C^{1,\beta}(\bar{\Omega}_{L})$ for some $\beta<\nu$ to the problem \eqref{2.9-1} satisfying
\begin{align}\label{2.14}
	0\leq \psi_{L}^{k}\leq m_{L}\quad \text{in }\overline{\Omega}_{L}.
\end{align}
Furthermore, there exists a constant $\rho_{\infty,L}^{*}\in (\rho_{\infty}^{*},\infty)$ independent of $k$ such that if $\rho_{\infty}>{\rho}_{\infty,L}^{*}$, it holds that
\begin{align}\label{2.15}
	\sup_{(x,r)\in \overline{\Omega}_{L}}\frac{|\nabla\psi_{L}^{k}|}{(r+k)(\check{H}_{L}^{k})^{\frac{\gamma+1}{2}}(\mathcal{M},\psi_{L}^{k};\rho_{\infty})}< 1-2\varepsilon_{0}.
\end{align}
	\end{lemma}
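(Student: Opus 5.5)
We would prove Lemma \ref{lem2.1} in three stages: existence on bounded domains, passage to the infinitely long nozzle, and a bound on the Mach number that uses the size of $\rho_\infty$.

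\textbf{Stage 1: existence on truncated domains.} Fix $k>0$. Since the equation \eqref{2.9-2} is uniformly elliptic with bounded coefficients (because of the truncation $\chi_0$ and the shift $r+k$), and $\check F^k+\check G^k$ satisfies the structure conditions of \cite[Theorem 12.5]{GT}, we work first on the bounded domains $\Omega_{L,N}=\Omega_L\cap\{|x|<N\}$. On these we pose boundary data interpolating between $0$ on $\Gamma$ and $m_L$ on $\Gamma_L$; on the lateral sides $x=\pm N$ we take the upstream profile $\psi_{\infty,L}$, which is $C^{2,\nu}$ away from the corners. The Leray--Schauder fixed point theorem (\cite[Theorem 11.4]{GT}) then gives a solution $\psi_{L,N}^k\in C^{2,\beta}\cap C^{1,\beta}$, provided we have the usual a priori bounds. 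For the gradient bound in the two-dimensional quasilinear setting we use \cite[Theorem 12.5]{GT}, together with boundary barriers and Schauder estimates. These estimates are uniform in $N$, so interior and boundary Schauder estimates on unit boxes let us pass $N\to\infty$ by Arzel\`a--Ascoli. This yields $\psi_L^k\in C^{2,\beta}(\Omega_L)\cap C^{1,\beta}(\bar\Omega_L)$ with $\beta<\nu$, the restriction on $\beta$ coming from the $C^{2,\nu}$ regularity of $f$.

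\textbf{Stage 2: the bound \eqref{2.14}.} This follows from the maximum principle. Since $F_L$ is constant for $s>m_L$, we have $F_L'=0$ there. In any region where $\psi>m_L$, the equation therefore reduces to $\check A^k_{ij}\partial_{ij}\psi-\frac{\psi_2}{r+k}=0$. This has no zeroth-order term, so the strong maximum principle excludes an interior maximum above $m_L$.

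The lower bound is the delicate part. For $s<0$ we have $F_L'(s)=\theta_L'(0)(1+s)\le0$ when $-1\le s<0$, and $F_L'=0$ when $s<-1$. Hence the source $(r+k)^2F_LF_L'(\check H_L^k)^2$ is $\le 0$ wherever $\psi<0$. This sign pushes $\psi$ downward, so a direct comparison fails. Our approach is to compare with a subsolution $\underline\psi$ vanishing on $\Gamma$, with $\underline\psi\ge0$ nearby. As the overview indicates (step 5), we would try
\[
\underline\psi=a\big((r+k)^2-(f+k)^2\big)-b\big((r+k)^3-(f+k)^3\big),
\]
localized near $\Gamma$ and near the axis, and capped by $\min\{\cdot,m_L\}$ away from there. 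The parameters $a,b$ are chosen so that $\check A^k_{ij}\partial_{ij}\underline\psi-\frac{\underline\psi_2}{r+k}\ge \check F^k$. Here the cubic term is what absorbs the negative source $\sim r^2F_LF_L'$, which is of order $(r+k)^2$. The term $-\underline\psi_2/(r+k)$ essentially cancels the $\delta_{ij}$ part of the quadratic term, and this is exactly why the cubic correction is needed. The same construction should also give $\psi\ge0$. Constructing this barrier, uniformly in $k$ and consistent with the truncated coefficients, is the step we expect to be the main obstacle.

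\textbf{Stage 3: the subsonic bound \eqref{2.15}.} We follow \cite{DD-2011,DD-2016} and track the order in $\rho_\infty$. By \eqref{2.14} and the gradient estimate, $|\nabla\psi_L^k|\le C_L\rho_\infty(r+k)$, with $C_L$ independent of $k$ (the constant may depend on $L$). For this we rescale $\tilde\psi=\psi/\rho_\infty$; the rescaled equation has coefficients whose ellipticity is uniform, and its source is $O(1)$ in $\rho_\infty$, because $F_LF_L'(\check H_L^k)^2\sim \rho_\infty^2\cdot\rho_\infty^{-1}$ after scaling. On the other hand, the truncated density satisfies $\check H_L^k\ge c\rho_\infty$. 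Therefore
\[
\frac{|\nabla\psi|}{(r+k)(\check H_L^k)^{\frac{\gamma+1}{2}}}\le C_L\,\rho_\infty^{1-\frac{\gamma+1}{2}}=C_L\,\rho_\infty^{-\frac{\gamma-1}{2}},
\]
which is smaller than $1-2\varepsilon_0$ once $\rho_\infty>\rho^*_{\infty,L}$ for a suitable $\rho^*_{\infty,L}$. This threshold is independent of $k$.
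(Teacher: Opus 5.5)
\textbf{Genuine gap: Stage 3.} The bound $|\nabla\psi_L^k|\le C_L\rho_\infty(r+k)$ with $C_L$ independent of $k$ is the entire content of the $k$-independence of $\rho^*_{\infty,L}$, and dividing by $\rho_\infty$ does not produce it. That normalization fixes the size of the source. But the first-order coefficient $1/(r+k)$ in \eqref{2.9-2} is still unbounded near the axis as $k\to0$, so global gradient estimates depend on $k$. Moreover, even a $k$-uniform bound on $|\nabla\psi_L^k|$ would not control $|\nabla\psi_L^k|/(r+k)$ near $r=0$. Knowing only $0\le\psi\le m_L$, you get $|\nabla\psi|/(r+k)\le C_{L,k}\rho_\infty$, hence a threshold depending on $k$; this is exactly the paper's Step 4.

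Two ingredients are missing:
\begin{itemize}
\item[(a)] a quadratic upper bound $\psi_{L,N}^k\le C_L\rho_\infty(r+k)^2$ near the axis;
\item[(b)] a rescaling $\psi_0=\xi^{-2}\psi_{L,N}^k(x_0+\xi x,\cdot)$ at a scale $\xi$ comparable to $r_0+k$ around each point near the axis. This turns $1/(r+k)$ into a bounded coefficient like $1/(2+r)$, keeps the source $O(\rho_\infty)$, and by (a) gives $\psi_0\le C_L\rho_\infty$. A gradient estimate for $\psi_0$ then yields $|\nabla\psi|/(r+k)\le C_L\rho_\infty$ uniformly in $k$.
\end{itemize}
The paper carries out (b) by Moser iteration on the divergence form \eqref{2.29-0}. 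That form is only available once the $\chi_0$-truncation is inactive. This is why the paper first proves subsonicity for $\rho_\infty$ large depending on $k$, and then runs a continuity argument in $\rho_\infty$ (Step 6) to bring the threshold down to a $k$-independent value.

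\textbf{Your cubic correction belongs in (a), but Stage 2 aims it at the wrong inequality.} Write $\mathcal{L}\psi=\check A^k_{ij}\partial_{ij}\psi-\frac{\psi_2}{r+k}$, so the equation is $\mathcal{L}\psi=\check F^k$. Since $\check F^k\le 0$ on $\{\psi<0\}$, $\psi$ is a supersolution there. The weak minimum principle on that open set, on whose boundary $\psi=0$, shows it is empty. A nonpositive source pushes $\psi$ up, not down, so the lower bound in \eqref{2.14} is as immediate as the upper one; this is the paper's Step 2.

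For the same reason, if you want a subsolution $\mathcal{L}\underline\psi\ge\check F^k$, the term $-b(r+k)^3$ contributes $-3b(r+k)<0$ to the radial part of $\mathcal{L}\underline\psi$ and only hurts. The negative source is an obstruction for a \emph{supersolution} instead: since $\mathcal{L}[(r+k)^2]=0$, a pure quadratic cannot lie below a negative right-hand side.

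The paper therefore takes $\tilde\psi=\rho_\infty(\mathcal{A}_L-\mathcal{B}_L r)(r+k)^2$. At points where $\nabla\psi=\nabla\tilde\psi$, the operator reduces to $\tilde\psi_{rr}-\tilde\psi_r/(r+k)$. This is negative of order $\rho_\infty\mathcal{B}_L(r+k)$ and beats $|\check F^k|\le C_L\rho_\infty(r+k)$ once $\mathcal{B}_L$ is large, while a large $\mathcal{A}_L$ handles the boundary data. Comparison gives $\psi_{L,N}^k\le\tilde\psi\le C_L\rho_\infty(r+k)^2$, which is exactly the input your Stage 3 needs.
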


	\noindent\textbf{Proof}: Motivated by \cite{CDXX-2016,DD-2011,DD-2016}, to deal with the unboundedness of the domain $\Omega_{L}$ in $x$-direction, we need to truncate the domain and establish some uniform estimates to approximate \eqref{2.9-1}. We divide the proof into several steps.
	
	\textbf{Step 1}. For any given integer $N>0$, using similar arguments as in \cite[Appendix]{XX-2007}, we can construct a domain  $\Omega_{L,N}$ satisfying
	\begin{itemize}
\item [(\rmnum{1})] $\{(x,r)\,|\,(x,r)\in \Omega_{L}, -N<x<N\}\subset \Omega_{L,N}\subset \{(x,r)\,|\,(x,r)\in \Omega_{L},-4N<x<4N\}$;
\item [(\rmnum{2})] $\Omega_{L,N}\in C^{2,\lambda}$ for some constant $0<\lambda<\nu$ satisfies the uniform interior sphere condition with uniform radius $\mathfrak{r}_{0}$ for all $N>N_{0}$ with some integer $N_{0}$ sufficiently large.
\end{itemize}
Noting that $\check{F}^{k}+\check{G}^{k}$ satisfies the structural conditions required in \cite[Theorem 12.5]{GT}, then it follows from \cite[Theorem 12.5]{GT} that there is a solution $\psi_{L,N}^{k}\in C^{2,\mu}(\Omega_{L,N})\cap C^{1,\mu}(\overline{\Omega}_{L,N})$ with $0<\mu<\lambda$ for the following truncated problem:
\begin{align}\label{2.22}
\left\{
\begin{aligned}
	&\check{A}_{ij}^{k}\partial_{ij}\psi=\check{F}^{k}+\check{G}^{k},\quad \text{in }\Omega_{L,N},\\
	&\psi=0\,\, \text{on }\Gamma\cap \partial\Omega_{L,N},\,\, \psi=\frac{m_{L}}{\int_{0}^{L}(s+k)u_{\infty,L}(s)\,{\rm d}s}\int_{0}^{r}(s+k)u_{\infty,L}(s)\,{\rm d}s\,\, \text{on }\partial\Omega_{L,N}\backslash\Gamma.
\end{aligned}
\right.
\end{align}

\textbf{Step 2}. It follows from \eqref{2.10}--\eqref{2.11} that $\check{F}^{k}\leq 0$ if $\psi\leq 0$, and $\check{F}^{k}=0$ if $\psi\geq m_{L}$. Then the maximum principle implies that
$$
0\leq \psi_{L,N}^{k}\leq m_{L}\quad \text{in }\overline{\Omega}_{L,N}.
$$

\textbf{Step 3}. By using similar arguments as in \cite[Section 3.5]{DD-2011} and \cite[Section 2.2]{DX-2014}, we have the following uniform estimates for fixed $L>0$ and $k>0$,
\begin{align}\label{2.22-0}
\|\psi_{L,K}^{k}\|_{1,\mu;\Omega_{L,N}}\leq \mathcal{C}(\frac{\Lambda_{k}}{\lambda_{k}},f,L)\Big(1+m_{L}+\frac{1}{\lambda_{k}}|\check{F}^{k}|_{0}\Big),
\end{align}
and
\begin{align}\label{2.22-01}
\|\psi_{L,K}^{k}\|_{2,\mu,\Omega_{L,N}}^{-1-\mu}\leq \mathcal{C}\Big(\frac{\Lambda_{k}}{\lambda_{k}},f,L,m_{L},\frac{1}{\lambda_{k}}|\check{F}^{k}|_{0}\Big),
\end{align}
for any $K\geq 4N$, where $\lambda_{k}$ and $\Lambda_{k}$ are the eigenvalues of the matrix $[\check{A}_{ij}^{k}]_{2\times 2}$. 

\textbf{Step 4}. We first claim that there exists a constant $\bar{\rho}_{\infty,L,k}^{*}>\rho_{\infty}^{*}$ depending on $k$ and $L$ such that if $\rho_{\infty}\geq \bar{\rho}_{\infty,L,k}^{*}$, one has
\begin{align}\label{2.15-01}
\sup_{(x,r)\in \overline{\Omega}_{L,N}}\frac{|\nabla \psi_{L,N}^{k}|}{(r+k)(\check{H}_{L}^{k})^{\frac{\gamma+1}{2}}\Big(\frac{|\nabla\psi_{L,N}^{k}|^2}{(r+k)^2},\psi_{L,N}^{k};\rho_{\infty}\Big)}< 1-2\varepsilon_{0}.
\end{align}
In fact, we first notice from \eqref{1.8-1} and \eqref{2.12} that $$\rho_{*}(\check{B}_{L}(\psi_{L,N}^{k};\rho_{\infty}))\sim O(\rho_{\infty}) \;\; \text{ and } \;\; \rho^{*}(\check{B}_{L}(\psi_{L,N}^{k};\rho_{\infty}))\sim O(\rho_{\infty})$$ for any $\rho_{\infty}>\rho_{\infty}^{*}$, which, together with the fact $H(\mathcal{M},\psi;\rho_{\infty})\in (\rho_{*},\rho^{*})$ for any $\mathcal{M}$ and $\psi$, implies that
\begin{align}\label{2.22-2}
\check{H}_{L}^{k}\Big(\frac{|\nabla\psi_{L,N}^{k}|^2}{(r+k)^2},\psi_{L,N}^{k};\rho_{\infty}\Big)=H\Big(\frac{|\nabla\psi_{L,N}^{k}|^2}{(r+k)^2},\check{B}_{L}(\psi_{L,N}^{k};\rho_{\infty})\Big)\sim O(\rho_{\infty}).
\end{align}
Therefore, one has
\begin{align*}
\check{F}^{k}=(r+k)^2F_{L}F_{L}'(\check{H}_{L}^{k})^2=(r+k)^2\frac{u_{\infty}'(\kappa(\psi_{L,N}^{k};\rho_{\infty}))}{\rho_{\infty}\kappa(\psi_{L,N}^{k};\rho_{\infty})}(\check{H}_{L}^{k})^2\leq C_{L}\rho_{\infty},
\end{align*}
which yields
\begin{align*}
\frac{|\check{F}^{k}|}{\lambda_{k}}\leq C_{L}\rho_{\infty},
\end{align*}
due to the fact $\lambda_{k}\geq C_{\varepsilon_{0}}>0$ for some positive constant $C_{\varepsilon_{0}}$ depending only on $\varepsilon_{0}>0$.
Moreover, a direct computation gives that for any fixed $\varepsilon_{0}$,
$$
\frac{\Lambda_{k}}{\lambda_{k}}\sim O(1),\qquad m_{L}=\rho_{\infty}\int_{0}^{L}su_{\infty,L}(s)\,{\rm d}s\leq C_{L}\rho_{\infty}.
$$
Therefore, we obtain from \eqref{2.22-0} and \eqref{2.22-2} that
\begin{align*}
\frac{|\nabla \psi_{L,N}^{k}|}{(r+k)\check{H}_{L}^{\frac{\gamma+1}{2}}\Big(\frac{|\nabla\psi_{L,N}^{k}|^2}{(r+k)^2},\psi_{L}^{k};\rho_{\infty}\Big)}\leq \frac{|\nabla \psi_{L,N}^{k}|}{k\check{H}_{L}^{\frac{\gamma+1}{2}}\Big(\frac{|\nabla\psi_{L,N}^{k}|^2}{(r+k)^2},\psi_{L}^{k};\rho_{\infty}\Big)}\leq C_{L,k}\rho_{\infty}^{\frac{1-\gamma}{2}},
\end{align*}
where $C_{L,k}$ depends on $L$ and $k$. So, taking $\bar{\rho}_{\infty,L,k}^{*}$ large enough such that $C_{L,k}\bar{\rho}_{\infty,L,k}^{*}< 1-2\varepsilon_{0}$, we conclude \eqref{2.15-01} for $\rho_{\infty}\geq \bar{\rho}_{\infty,L,k}^{*}$.

\textbf{Step 5}. We are going to prove that the constant $\bar{\rho}_{\infty,L,k}^{*}$ in Step 4 can be chosen independent of $k$. To achieve this, we first need to establish uniform estimates with respect to $k$ near the axis. Different from the ones in \cite{DD-2011,DD-2016}, $\check{F}^{k}\leq 0$ due to $\theta_{L}'\leq 0$ by our assumption on $u_{\infty}(r)$, which results in a slightly more complicated construction on the barrier function. Indeed, we set
$$
\tilde{\psi}(r)=\rho_{\infty}(\mathcal{A}_{L}-\mathfrak{B}_{L}r)(r+k)^2,
$$
where $\mathcal{A}_{L}$ and $\mathcal{B}_{L}$ are some suitably large constants depending on $L$, which will be chosen later. A straightforward computation shows that
\begin{align*}
\tilde{\psi}_{x}=0,\quad \tilde{\psi}_{r}=2\rho_{\infty}(\mathcal{A}_{L}-\mathcal{B}_{L}r)(r+k)-\rho_{\infty}\mathcal{B}_{L}(r+k)^2.
\end{align*}
Then, we have
\begin{align*}
\frac{|\nabla \tilde{\psi}|^2}{(r+k)^2}=\rho_{\infty}^2\big[2(\mathcal{A}_{L}-\mathcal{B}_{L}r)-\mathcal{B}_{L}(r+k)\big]^2\leq C_{L}\rho_{\infty}^2,\quad \frac{|\nabla \tilde{\psi}|^2}{(r+k)^2\check{H}_{L}(|\frac{\tilde{\psi}}{r+k}|^2,\tilde{\psi};\rho_{\infty})}\leq C_{L}\rho_{\infty}^{1-\gamma},
\end{align*}
which implies that
$$
\sup_{(x,r)\in \overline{\Omega}_{L}}\frac{|\nabla \tilde{\psi}|}{(r+k)(\check{H}_{L}^{k})^{\frac{\gamma+1}{2}}(|\frac{\nabla \tilde{\psi}}{r+k}|^2,\tilde{\psi};\rho_{\infty})}< 1-2\varepsilon_{0},
$$
provided that $\rho_{\infty}>\bar{\rho}_{\infty,L}^{*}$ for some large constant $\bar{\rho}_{\infty,L}^{*}$ depending only $L$ and $\varepsilon_{0}$. Then at the point where $\nabla \psi_{L,N}^{k}=\nabla\tilde{\psi}$, we have
$$
\frac{\nabla \psi_{L,N}^{k}}{(r+k)(\check{H}_{L}^{k})^{\frac{\gamma+1}{2}}\Big(\frac{|\nabla\psi_{L,N}^{k}|^2}{(r+k)^2},\psi_{L};\rho_{\infty}\Big)}< 1-2\varepsilon_{0}.
$$
Then, by direct calculations, one has that, at the point where $\nabla\psi_{L,N}^{k}=\nabla\tilde{\psi}$,
\begin{align}\label{2.23}
\check{A}_{ij}^{k}\partial_{ij}\tilde{\psi}-\frac{\tilde{\psi}_{r}}{r+k}=\tilde{\psi}_{rr}-\frac{\tilde{\psi}_{r}}{r+k}=-\rho_{\infty}\mathcal{B}_{L}(3r+k).
\end{align}
For the solution $\psi_{L,N}^{k}$ of the problem \eqref{2.22}, a direct calculation shows that
\begin{align*}
&(r+k)^2F_{L}(\psi_{L,N}^{k};\rho_{\infty})F_{L}'(\psi_{L,N}^{k};\rho_{\infty})(\check{H}_{L}^{k})^2\Big(\frac{|\nabla\psi_{L,N}^{k}|^2}{(r+k)^2},\psi_{L,N}^{k};\rho_{\infty}\Big)\nonumber\\
&\leq \frac{(r+k)^2u_{\infty}'(\kappa_{L}(\psi_{L,N}^{k};\rho_{\infty}))(\check{H}_{L}^{k})^2\big(\frac{|\nabla\psi_{L,N}^{k}|^2}{(r+k)^2},\psi_{L,N}^{k};\rho_{\infty}\big)}{\rho_{\infty}\kappa_{L}(\psi_{L,N}^{k};\rho_{\infty})}\leq C_{L}\rho_{\infty}(r+k).
\end{align*}
So, taking $\mathcal{B}_{L}$ large enough, we obtain from \eqref{2.22} and \eqref{2.23} that, at the point where $\nabla\psi_{L,N}^{k}=\nabla \tilde{\psi}$,
$$
\check{A}_{ij}^{k}\partial_{ij}(\psi_{L,N}^{k}-\tilde{\psi})-\frac{\partial_{r}(\psi_{L,N}^{k}-\tilde{\psi})}{r+k}\geq 0.
$$
On the other hand, taking $\mathcal{A}_{L}$ large enough such that $\tilde{\psi}(r)\geq \psi_{L,N}^{k}$ on $\partial\Omega_{L,N}$, we obtain from the comparison principle that
\begin{align}\label{2.27}
\psi_{L,N}^{k}\leq \tilde{\psi}\leq C_{L}\rho_{\infty}(r+k)^2\quad \text{in }\overline{\Omega}_{L,N},
\end{align}
provided $\rho_{\infty}>\bar{\rho}_{\infty,L}^{*}$.

For any fixed point $(x_{0},r_{0})\in \R\times (0,\delta_{0})$ for some small $\delta_{0}>0$, we set
$$
\psi_{0}(x,r)=\frac{1}{\xi^2}\psi_{L,N}^{k}(x_{0}+x\xi,r_{0}+r\xi+k),\quad \text{with $\xi=\frac{r_{0}+2k}{2}$},
$$
which is well defined in $B_{1}(0,0)$. Moreover, a direct calculation yields that
$$
\frac{\nabla \psi_{L,N}^{k}(x_{0}+x\xi,r_{0}+r\xi+k)}{r_{0}+r\xi+2k}=\frac{\nabla \psi_{0}}{2+r}.
$$
Moreover, we obtain from the Step 2 and Step 4 that $\psi_{L,N}^{k}$ satisfies
\begin{align}\label{2.29-0}
&\operatorname{div}\Bigg(\frac{\nabla \psi_{L,N}^{k}}{(r+k)H_{L}\Big(\frac{|\nabla\psi_{L,N}^{k}|^2}{(r+k)^2},\psi_{L,N}^{k};\rho_{\infty}\Big)}\Bigg)\nonumber\\
&=(r+k)\theta_{L}(\psi_{L,N}^{k})\theta_{L}'(\psi_{L,N}^{k})H_{L}\Big(\frac{|\nabla\psi_{L,N}^{k}|^2}{(r+k)^2},\psi_{L,N}^{k};\rho_{\infty}\Big).
\end{align}
provided that $\rho_{\infty}>\max\{\bar{\rho}_{\infty,L,k}^{*},\bar{\rho}_{\infty,L}^{*}\}$. So, $\psi_{0}$ satisfies
\begin{align}\label{2.29}
&\operatorname{div}\Bigg(\frac{\nabla\psi_{0}}{(2+r)H_{L}\Big(|\frac{\nabla \psi_{0}}{2+r}|^2,\xi^2\psi_{0};\rho_{\infty}\Big)}\Bigg)\nonumber\\
&=(r_{0}+r\xi+k)\theta_{L}(\xi^2\psi_{0})\theta_{L}'(\xi^2\psi_{0})H_{L}\Big(\big\vert\frac{\nabla \psi_{0}}{2+r}\big\vert^2,\xi^2\psi_{0};\rho_{\infty}\Big).
\end{align}
Due to \eqref{2.27}, for $\rho_{\infty}\geq\max\{\bar{\rho}_{\infty,L,k}^{*},\bar{\rho}_{\infty,L}^{*}\}$, we have
\begin{align*}
\psi_{0}(x,r)=\frac{4}{(r_{0}+2k)^2}\psi_{L,N}^{k}(x_{0}+x\xi,r_{0}+\xi r+k)\leq C_{L}\rho_{\infty}.
\end{align*}
Then, applying Moser's iteration \cite[Theorem 8.35]{GT} to \eqref{2.29}, we have
$$
|\nabla \psi_{0}(0,0)|\leq C_{L}\rho_{\infty}.
$$
In particular,
$$
\Big\vert\frac{\nabla \psi_{L,N}^{k}}{r+k}\Big\vert\leq |\nabla \psi_{0}(0,0)|\leq C\rho_{\infty}\quad \text{for $r<\delta_{0}$}.
$$
On the other hand, for any $r>\delta_{0}$, we obtain from \eqref{2.22-0} that
$$
\frac{|\nabla \psi_{L,N}^{k}|}{r+k}\leq C_{L,\delta_{0}}\rho_{\infty}.
$$
In conclusion, for $\rho_{\infty}\geq \max\{\bar{\rho}_{\infty,L,K}^{*},\bar{\rho}_{\infty,L}^{*}\}$, we have
\begin{align}\label{2.29-2}
\frac{|\nabla \psi_{L,N}^{k}|}{r+k}\leq C_{L}\rho_{\infty}\quad \text{in $\overline{\Omega}_{L,N}$},
\end{align}
where $C_{L}$ depends only on $L$ and $\varepsilon_{0}$. 

\textbf{Step 6}. Now, given $\rho_{\infty}\in (\rho_{\infty}^{*},\infty)$, let $\mathcal{S}_{L}^{k}(\rho_{\infty})$ be the set of all solutions $\psi_{L,N}^{k}$ of the problem \eqref{2.22} associated with $\rho_{\infty}$. Define
\begin{align}\label{2.29-3}
\mathcal{Q}_{L}^{k}(\rho_{\infty})=\sup_{\psi_{L,N}^{k}\in \mathcal{S}_{L}^{k}(\rho_{\infty})}\sup_{(x,r)\in \overline{\Omega}_{L,N}}\frac{|\nabla \psi_{L,N}^{k}|}{(r+k)(\check{H}_{L}^{k})^{\frac{\gamma+1}{2}}\Big(\frac{|\nabla\psi_{L,N}^{k}|^2}{(r+k)^2},\psi_{L,N}^{k},\rho_{\infty}\Big)}.
\end{align}
We denote
\begin{align}\label{2.29-4}
\rho_{\infty,L,k}^{*}=\operatorname{inf}\{s\,|\,\mathcal{Q}_{L}^{k}<1-2\varepsilon_{0}\text{ for any $\rho_{\infty}>s$}\}.
\end{align}
It is clear that $\rho_{\infty,L,k}^{*}\leq \max\{\bar{\rho}_{\infty,L}^{*},\bar{\rho}_{\infty,L,k}^{*}\}$.

We claim that if $\rho_{\infty,L,k}^{*}\geq 2\bar{\rho}_{\infty,L}^{*}$, then $\mathcal{Q}_{L}^{k}(\rho_{\infty,L,k}^{*})=1-2\varepsilon_{0}$. Indeed, first, it follows from the continuous dependence on the parameter $\rho_{\infty}$ for the solution of uniformly elliptic equations \eqref{2.22} that $\mathcal{Q}_{L}^{k}(\rho_{\infty,L,k}^{*})\leq 1-2\varepsilon_{0}$. Second, if $\mathcal{Q}_{L}^{k}(\rho_{\infty,L,k}^{*})< 1-2\varepsilon_{0}$ and $\rho_{\infty,L,k}^{*}\geq 2\bar{\rho}_{\infty,L}^{*}$, we obtain from \eqref{2.29-2} that $\mathcal{Q}_{L}^{k}(\rho_{\infty,L,k}^{*})\leq C_{L}\rho_{\infty}^{\frac{1-\gamma}{2}}$. Therefore, there exists a $\delta>0$ such that $\mathcal{Q}_{L}^{k}(s)\leq 1-2\varepsilon_{0}$ for any $s\in (\rho_{\infty,L,k}^{*}-\delta,\rho_{\infty,L,k}^{*})$, which contradicts with the definition of $\rho_{\infty,L,k}^{*}$. 

Thus, if $\rho_{\infty,L,k}^{*}\geq 2\bar{\rho}_{\infty,L}^{*}$, there exists a solution $\psi_{L,N}^{k}$ of \eqref{2.22} associated with $\rho_{\infty}=\rho_{\infty,L,k}^{*}$ satisfying 
\begin{align*}
\frac{|\nabla \psi_{L,N}^{k}|}{(r+k)(\check{H}_{L}^{k})^{\frac{\gamma+1}{2}}\Big(\frac{|\nabla\psi_{L,N}^{k}|^2}{(r+k)^2},\psi_{L,N}^{k};\rho_{\infty}\Big)}=1-2\varepsilon_{0}.
\end{align*}
Noting \eqref{2.29-2}, we have
\begin{align*}
1-2\varepsilon_{0}&= \frac{|\nabla \psi_{L,N}^{k}|}{(r+k)(\check{H}_{L}^{k})^{\frac{\gamma+1}{2}}\Big(\frac{|\nabla\psi_{L,N}^{k}|^2}{(r+k)^2},\psi_{L,N}^{k};\rho_{\infty}\Big)}\nonumber\\
&\leq \frac{\sup_{(x,r)\in \overline{\Omega}_{L,N}} \frac{|\nabla \psi_{L}^{k}|}{r+k}}{\inf_{(x,r)\in \overline{\Omega}_{L,N}}(\check{H}_{L}^{k})^{\frac{\gamma+1}{2}}\Big(\frac{|\nabla\psi_{L,N}^{k}|^2}{(r+k)^2},\psi_{L,N}^{k};\rho_{\infty}\Big)}\leq C_{L}(\rho_{\infty,L,k}^{*})^{\frac{1-\gamma}{2}},
\end{align*}
which implies that
$$
\rho_{\infty,L,k}^{*}\leq C_{L,\varepsilon_{0}},
$$
where $C_{L,\varepsilon_{0}}$ depends only on $L$ and $\varepsilon_{0}$ and is independent of $k$. Then choosing
$$
\rho_{\infty,L}^{*}=\max\{2\bar{\rho}_{\infty,L}^{*},C_{L,\varepsilon_{0}}\}.
$$
It is clear that $\rho_{\infty,L}^{*}$ is independent of $k$, and if $\rho_{\infty}>\rho_{\infty,L}^{*}$, there exists a solution $\psi_{L,N}^{k}$ to \eqref{2.22} satisfying \eqref{2.15-01}. In particular, $\psi_{L,N}^{k}$ will solve \eqref{2.29-0}.

\textbf{Step 7}. In view of \eqref{2.22-0}--\eqref{2.22-01}, it follows from the Arzela-Ascoli lemma and diagonal procedure that there exists a sub-sequence $\{\psi_{L,K_{l}}^{k}\}_{l=1}^{\infty}$ for some fixed $k$ and $l$, such that $\psi_{L,K_{l}}^{k}\to \psi_{L}^{k}$ in $C^{2,\beta}(\Omega_{L})\cap C^{1,\beta}(\bar{\Omega}_{L})$ with $\beta<\mu<\nu$. And $\psi_{L}^{k}$ solves \eqref{2.9-1} and satisfies \eqref{2.14} and
\begin{align*}
\|\psi_{L}^{k}\|_{1,\beta;\Omega_{L}}\leq C(\frac{\Lambda_{k}}{\lambda_{k}},L,f)\Big(1+m_{L}+\frac{1}{\lambda_{k}}|\check{F}^{k}|_{0}\Big).
\end{align*}
Moreover, taking $N\to \infty$, we obtain from \eqref{2.27} and \eqref{2.15-01} that for $\rho_{\infty}>\rho_{\infty,L}^{*}$,
\begin{align*}
\psi_{L}^{k}\leq C_{L}\rho_{\infty}(r+k)^2.
\end{align*}
and $\psi_{L}^{k}$ satisfies \eqref{2.15}. Hence, the proof of Lemma \ref{lem2.1} is complete. $\hfill\square$

\begin{lemma}\label{lem2.3}
Let $\rho_{\infty,L}^{*}$ be the constant found in Lemma \ref{lem2.1}, then it holds that
\begin{align}\label{2.15-0}
	\partial_{r}\psi_{L}^{k}>0\quad \text{for all $(x,r)\in \overline{\Omega}_{L}$},
\end{align}
which means the subsonic flow $\psi_{L}^{k}$ is never degenerate in $\overline{\Omega}_{L}$.
\end{lemma}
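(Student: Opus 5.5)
Following the strategy sketched in (iii) of the introduction, we use a continuity argument in a parameter that deforms the upstream profile, combined with a strong maximum principle for the equation satisfied by $w=\partial_r\psi_L^k$. Set $u_{\infty,L}^{t}(r)=u_{\infty}(0)+t\int_0^r g_L(s)\,{\rm d}s$ for $t\in[0,1]$, and let $\psi^{t}$ be the corresponding solution of \eqref{2.9-1} given by Lemma \ref{lem2.1}. The constant $\rho_{\infty,L}^{*}$ there depends only on bounds that are uniform in $t$, since $|g_L|$ only shrinks. The structural condition $r\,(u^t_{\infty,L})''\ge (u^t_{\infty,L})'$ and monotonicity $(u^t_{\infty,L})'\le 0$ are preserved for every $t$. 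Define
\[
\mathcal{T}=\{t\in[0,1]:\ \partial_r\psi^{t}>0\ \text{in }\overline{\Omega}_L\}.
\]
We aim to show that $\mathcal{T}$ is nonempty, open and closed.

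\textbf{Nonemptiness.} At $t=0$ the flow is irrotational, because $F_L'\equiv0$ on $[0,m_L]$. With the truncated weight $\frac{1}{r+k}$ the equation is then a uniformly elliptic equation with no source term. Differentiating \eqref{2.9-1} in $r$ gives a linear elliptic equation $a_{ij}\partial_{ij}w+b_i\partial_i w+c\,w=0$ for $w=\partial_r\psi^0$, where $c=\frac{1}{(r+k)^2}$ comes from $\partial_r\big(\frac{\psi_2}{r+k}\big)$ plus terms from the $r$-dependence of the coefficients. Since the coefficients depend on $\psi$, the natural route, as in \cite{XX-2010-1}, is different: compare $\psi^0(x,\cdot)$ with its translates, using uniqueness of the irrotational problem and the monotone boundary data, to obtain $\psi^0(x,r+h)\ge\psi^0(x,r)$. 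This yields $w\ge0$.

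The boundary values of $w$ are handled as follows. On $\Gamma_L$, Hopf's lemma applied to $m_L-\psi$ gives $w>0$. On $\Gamma$ the same argument applied to $\psi\ge0$ gives $\partial_\nu\psi>0$, and together with $\psi=0$ on $\Gamma$ this yields $\partial_r\psi>0$ wherever the normal has a positive $r$-component. That holds everywhere on $\Gamma$ since $\Gamma$ is a graph $r=f(x)$. As $|x|\to\infty$, $\psi^0$ converges to the one-dimensional upstream profile, which has positive $r$-derivative.

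\textbf{Openness and closedness.} Openness follows from continuous dependence of $\psi^t$ in $C^{1,\beta}$ on $t$. This requires uniform positivity of $w$ on compact sets together with the far-field positivity: at $x=\pm\infty$ the solution tends to the $r$-profile $\psi^t_\infty(r)$, whose derivative $\rho_\infty(r+k)u^t_{\infty,L}$ is bounded below by $\rho_\infty k\bar u/2$. Closedness is the key step. If $t_n\to t_*$ with $t_n\in\mathcal{T}$, then $w_*=\partial_r\psi^{t_*}\ge0$, and we must exclude an interior zero.

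Here we differentiate the equation and write it as $L w:=\check A_{ij}\partial_{ij}w+b_i\partial_iw+c\,w=0$. The source contributes $\partial_r\big[(r+k)^2F_LF_L'\check H^2\big]$, whose $\psi$-derivative part multiplies $w$. Its $r$-explicit part is $2(r+k)F_LF_L'\check H^2\le0$, by monotonicity $F_L'\le0$. This term, and the structural condition (which controls the sign of $(F_LF_L')'$ times $(r+k)^2$), will be used to show that the inhomogeneous remainder is $\le0$, i.e.\ $-Lw\ge0$ with $w\ge0$.

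The strong maximum principle for nonnegative supersolutions holds without a sign condition on $c$: write $c=c^+-c^-$ and use $Lw-c^+w\le -c^+w\le0$. A zero of $w$ at an interior point would then force $w\equiv0$, which contradicts the boundary positivity. A zero on the boundary is excluded by Hopf's lemma applied to $\psi$ itself, as in the nonemptiness step.

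The main obstacle I expect is precisely this sign bookkeeping: verifying that after differentiating in $r$, all terms not proportional to $w$ or its derivatives are nonpositive. This is exactly where $u_\infty'\le0$ and \eqref{1.3-4} must enter. If it fails directly, I would instead run the continuity argument on the quotient $w/(r+k)$, which corresponds to $\rho u$. For that quantity the structural condition is tailored to give the right sign, since $\frac{d}{dr}\big(u'_\infty/r\big)\ge0$ by Remark \ref{rem1}.
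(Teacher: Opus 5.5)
Your overall structure is the paper's: an irrotational base case, a deformation of the upstream profile, and closedness via the strong maximum principle for $w=\partial_r\psi$ with no sign condition on $c$, using only $F_L'\le 0$. Your sign bookkeeping does close. After applying $\partial_r$, the only terms not proportional to $w$ or $\nabla w$ are $2(r+k)F_LF_L'H^2$ and $-4F_LF_L'HH_1\psi_x^2/(r+k)$. The latter comes from the explicit $r$ in $\mathcal{M}_k=|\nabla\psi|^2/(r+k)^2$. Both are $\le 0$ because $F_LF_L'\le 0$ and $H_1<0$. The $(F_LF_L')'$ term multiplies $w$ and is absorbed into $c$, so \eqref{1.3-4} is not needed at this point.

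The genuine gap is the base case $t=0$. Sliding $\psi^0$ in $r$ does not work, because even the irrotational equation depends explicitly on $r$ through the weight $r+k$. Already for constant $H$, the shift $\psi_h(x,r)=\psi^0(x,r+h)$ satisfies
\[
\Delta\psi_h-\frac{\partial_r\psi_h}{r+k}=-\frac{h\,\partial_r\psi_h}{(r+k)(r+h+k)}.
\]
So $\psi_h$ is a supersolution of the original equation only if you already know $\partial_r\psi^0\ge 0$, and the argument is circular. Uniqueness by itself gives no ordering. You also cannot fall back on the $w$-equation at $t=0$. There $c$ contains $+1/(r+k)^2$ and has no sign, so no weak maximum principle is available, and the sign-free strong maximum principle presupposes $w\ge 0$. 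Hence your continuity argument has no starting point.

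The missing idea, which is what the paper uses, is to exploit irrotationality through the velocity potential. Set $u=\partial_r\psi/((r+k)\rho)$ and $v=-\partial_x\psi/((r+k)\rho)$. The equation gives $u_r=v_x$, so $u=\partial_x\Phi$ and $v=\partial_r\Phi$, where $\mathfrak{a}_{ij}(\nabla\Phi)\partial_{ij}\Phi+\partial_r\Phi/(r+k)=0$. These coefficients are independent of $x$. Differentiating in the translation-invariant direction $x$ therefore gives an equation for $u$ with no zeroth-order term. The weak maximum principle together with $u>0$ on the boundary then yields $u>0$, i.e.\ $\partial_r\psi^0>0$.

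A secondary point concerns openness. The paper runs the whole argument on the bounded truncations $\Omega_{L,N}$. There $\partial_r\psi>0$ on the lateral boundary is explicit from the prescribed data, and compactness gives a positive minimum, so openness is immediate. The paper then lets $N\to\infty$ to get $\partial_r\psi_L^k\ge 0$ and upgrades to strict positivity by the strong maximum principle and Hopf's lemma. Working on the unbounded $\Omega_L$, your openness step needs a far-field lower bound for $\partial_r\psi^t$, uniform in $t$, which you have not justified.
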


\noindent\textbf{Proof}. The non-degenerate condition \eqref{2.15-0} is more complicated to prove for axisymmetric Euler flows compared with the ones for 2D Euler flows \cite{CDXX-2016,XX-2010-2}. Indeed, different from \cite{CDXX-2016,XX-2010-2}, the coefficients in the equation of stream function depend on $r$, and the associated terms are out of control after applying $\partial_{r}$ on the coefficients and then multiplying by $\partial_{r}\psi_{L}^{k}$. So, we are not able to prove \eqref{2.15-0} by considering directly the equation of $\partial_{r}\psi_{L}^{k}$ and using the energy estimates as in \cite{CDXX-2016,XX-2010-2}. Motivated by \cite{DWX-2018}, to overcome this difficulty, we start from an observation on a potential flow case with coefficients truncated near the axis, and then use a continuity argument and maximum principle to achieve \eqref{2.15-0}. We divide the proof into four steps.

\textbf{Step 1}. We first consider the irrotational case. For the potential flow, the Bernoulli's function is a constant $B_{L}(\psi)\equiv B_{L}(0)=:B_{\infty}$, \textit{i.e.}, $u_{\infty,L}(r)\equiv u_{\infty}(0)$ is a constant. Then we consider the solution of the equation:
\begin{align}\label{P1}
\left\{
\begin{aligned}
	&\operatorname{div}\Big(\frac{\nabla \tilde{\psi}_{L,N}^{k}}{(r+k)H_{L}\big(\big\vert\frac{\nabla \tilde{\psi}_{L,N}^{k}}{r+k}\big\vert^2,\tilde{\psi}_{L,N}^{k};\rho_{\infty}\big)}\Big)=0,\\
	&\tilde{\psi}_{L,N}^{k}=0\,\,\text{on }\Gamma,\quad \tilde{\psi}_{L,N}^{k}=\frac{\rho_{\infty}u_{\infty}(0)\int_{0}^{L}s\,{\rm d}s}{\int_{0}^{L}(s+k)\,{\rm d}s}\int_{0}^{r}(s+k)\,{\rm d}s\quad \text{on }\partial\Omega_{L,N}\backslash \Gamma.
\end{aligned}
\right.
\end{align}
The existence of a subsonic solution of \eqref{P1} for $N$ large enough when $\rho_{\infty}\geq \rho_{\infty,L}^{*}$ can be proved by using the same arguments as in the proof of Lemma \ref{lem2.1}. Moreover, it holds that
$$
0\leq \tilde{\psi}_{L,N}^{k}\leq \frac{1}{2}\rho_{\infty}u_{\infty}(0)L^2.
$$
Noting that $\tilde{\psi}_{L,N}^{k}$ attains the minimum on the lower boundary $\Gamma$, and the maximum on the upper boundary $\Gamma_{L}$, we obtain from the Hopf point lemma that
$$
\partial_{r}\tilde{\psi}_{L,N}^{k}>0\quad \text{on $\Gamma$ and $\Gamma_{L}$}.
$$
Furthermore, on the left and right boundaries, we obtain from the boundary conditions in \eqref{P1}
that
$$
\partial_{r}\tilde{\psi}_{L,N}^{k}(x,r)\vert_{\partial\Omega_{L,N}\backslash (\Gamma\cup \Gamma_{L})}=\rho_{\infty}u_{\infty}(0)(r+k)\frac{\int_{0}^{L}s\,{\rm d}s}{\int_{0}^{L}(s+k)\,{\rm d}s}>0.
$$
So, in conclusion, we have
\begin{align}\label{E1-1}
\partial_{r}\tilde{\psi}_{L,N}^{k}>0\quad \text{on }\partial\Omega_{L,N}.
\end{align}
For the potential flow \eqref{P1}, to show that $\partial_{r}\tilde{\psi}_{L,N}^{k}>0$ in $\Omega_{L,N}$, we shall consider the associated potential function rather than the stream function $\tilde{\psi}_{L,N}^{k}$. The advantage is that we only need to take $\partial_{x}$ on both sides of the equation of the potential function to avoid the difficulty that the coefficients depend on $r$. To this end, we denote
$$
u_{N}^{k}=\frac{\partial_{r}\tilde{\psi}_{L,N}^{k}}{(r+k)H_{L}\big(\big\vert\frac{\nabla \tilde{\psi}_{L,N}^{k}}{r+k}\big\vert^2,\tilde{\psi}_{L,N}^{k};\rho_{\infty}\big)},\quad v_{N}^{k}=-\frac{\partial_{x}\tilde{\psi}_{L,N}^{k}}{(r+k)H_{L}\big(\big\vert\frac{\nabla \tilde{\psi}_{L,N}^{k}}{r+k}\big\vert^2,\tilde{\psi}_{L,N}^{k};\rho_{\infty}\big)},
$$
and
$$
\rho_{N}^{k}=H_{L}\big(\big\vert\frac{\nabla \tilde{\psi}_{L,N}^{k}}{r+k}\big\vert^2,\tilde{\psi}_{L,N}^{k};\rho_{\infty}\big).
$$
Thus, it is clear that $(\rho^{k},u_{N}^{k},v_{N}^{k})$ satisfies following Euler equations:
\begin{align}\label{P2}
\left\{
\begin{aligned}
	&((r+k)\rho^{k}u_{N}^{k})_{x}+((r+k)\rho^{k}v_{N}^{k})_{r}=0,\\
	&(u_{N}^{k})_{r}-(v_{N}^{k})_{x}=0,
\end{aligned}
\right.
\end{align}
with the Bernoulli law:
\begin{align}\label{E3}
\frac{1}{2}[(u_{N}^{k})^2+(v_{N}^{k})^2]+\frac{1}{\gamma-1}(\rho_{N}^{k})^{\gamma-1}=\frac{1}{2}(u_{\infty}(0))^2+\frac{1}{\gamma-1}\rho_{\infty}^{\gamma-1}=:B_{\infty}.
\end{align}
Now, we introduce the potential function $\Phi_{N}^{k}$, which satisfies
$$
\partial_{x}\Psi_{N}^{k}=u_{N}^{k},\quad \partial_{r}\Psi_{N}^{k}=v_{N}^{k}.
$$
Then it follows from \eqref{E3} that the density $\rho_{N}^{k}$ can be written as a function of $\Phi_{N}^{k}$:
$$
\rho_{N}^{k}=H_{L}(|\nabla \Phi_{N}^{k}|^2,B_{\infty}).
$$

We further rewrite $\eqref{P2}_{1}$ as
\begin{align}\label{E4}
\operatorname{div}\big(g(|\nabla \Phi_{N}^{k}|^2,r+k,B_{\infty})\nabla \Phi_{N}^{k}\big)=0,
\end{align}
where
$$
g(|\nabla \Phi_{N}^{k}|^2,r+k,B_{\infty})=(r+k)\rho_{N}^{k}=(r+k)H_{L}(|\nabla \Phi_{N}^{k}|^2,B_{\infty}).
$$
By direct calculations, we can rewrite \eqref{E4} in the following non-divergence form:
\begin{align}\label{E5}
\mathfrak{a}_{ij}(\nabla \Phi_{N}^{k},B_{\infty})\partial_{ij}\Phi_{N}^{k}+\frac{1}{r+k}\partial_{r}\Phi_{N}^{k}=0,
\end{align}
where
$$
\mathfrak{a}_{ij}=\delta_{ij}-\frac{\partial_{i}\Phi_{N}^{k}\partial_{j}\Phi_{N}^{k}}{H_{L}^{\gamma-1}(|\nabla \Phi_{N}^{k}|^2,B_{\infty})}.
$$
Applying $\partial_{x}$ to \eqref{E5} to get $u_{N}^{k}=\partial_{x}\Phi_{N}^{k}$ satisfies
\begin{align}\label{E4-1}
\mathfrak{a}_{ij}\partial_{ij}u_{N}^{k}+D_{p_{m}}\mathfrak{a}_{ij}\partial_{ij}\Phi_{N}^{k}\partial_{m}u_{N}^{k}+\frac{1}{r+k}\partial_{r}u_{N}^{k}=0,
\end{align}
where $D_{p_{m}}\mathfrak{a}_{ij}$ means the partial derivative of $\mathfrak{a}_{ij}(\nabla \Phi_{N}^{k},B_{\infty})$ with respect to $\partial_{m}\Phi_{N}^{k}$
for $m=1,2$. We obtain from \eqref{E1-1} that
$$
u_{N}^{k}=\Phi_{x}>0\quad \text{on }\partial\Omega_{L,N}.
$$
Applying the maximum principle to \eqref{E4-1}, we have
$$
u_{N}^{k}>0\quad \text{in $\overline{\Omega}_{L,N}$},
$$
which yields
\begin{align}\label{E7}
\partial_{r}\tilde{\psi}_{L,N}^{k}>0\quad \text{in $\overline{\Omega}_{L,N}$}.
\end{align}
That is, $\partial_{r}\tilde{\psi}_{L,N}^{k}$ will never degenerate including the axis. That is why we need to introduce the truncated coefficient $(r+k)$ near the axis even for the irrotational case. In particular, 
\begin{align}\label{E7-1}
\min_{(x,r)\in \overline{\Omega}_{L,N}}\partial_{r}\tilde{\psi}_{L,N}^{k}>0.
\end{align}

\textbf{Step 2}. Next, let us consider the case of Euler flow when the upstream flow is a small perturbation of the potential flow. For potential flow, we denote by $\rho=H_{L}\big(\big\vert\frac{\nabla \tilde{\psi}_{L,N}^{k}}{r}\big\vert^2,B_{\infty};\rho_{\infty}\big)$ the density, which is determined by
\begin{align}\label{B1}
\frac{|\nabla\tilde{\psi}_{L,N}^{k}|^2}{2(r+k)^2\rho^2}+\frac{1}{\gamma-1}\rho^{\gamma-1}=\frac{1}{2}(u_{\infty}(0))^2+\frac{1}{\gamma-1}\rho_{\infty}^{\gamma-1}=B_{\infty}.
\end{align}

For the Euler flow, we consider the subsonic solution $\psi_{L,N}^{k}$ to \eqref{2.29-0}, that is, $\psi_{L,N}^{k}$ satisfies
\begin{align}\label{2.22-00}
\left\{
\begin{aligned}
	&\operatorname{div}\left(\frac{\nabla \psi_{L,N}^{k}}{(r+k)H_{L}\big(\frac{|\nabla \psi_{L,N}^{k}|^2}{(r+k)^2},B_{L}(\psi_{L,N}^{k});\rho_{\infty}\big)}\right)\\
	&\quad =(r+k)\theta_{L}(\psi_{L,N}^{k})\theta_{L}'(\psi_{L,N}^{k})H_{L}\big(\frac{|\nabla \psi_{L,N}^{k}|^2}{(r+k)^2},B_{L}(\psi_{L,N}^{k});\rho_{\infty}\big),\\
	&\psi_{L,N}^{k}=0\quad \text{on }\Gamma\cap \partial\Omega_{L,N},\\
	&\psi_{L,N}^{k}=\frac{m_{L}}{\int_{0}^{L}u_{\infty,L}(s)(s+k)\,{\rm d}s}\int_{0}^{r}u_{\infty,L}(s)(s+k)\,{\rm d}s\quad \text{on }\partial\Omega_{L,N}\backslash \Gamma,
\end{aligned}
\right.
\end{align}
and we denote the corresponding density by $\rho={H}_{L}\big(\big\vert\frac{\nabla \psi_{L,N}^{k}}{r}\big\vert^2,B_{L}(\psi_{L,N}^{k});\rho_{\infty}\big)$, which is determined by
\begin{align}\label{B2}
\frac{|\nabla \psi_{L,N}^{k}|^2}{2(r+k)^2\rho^2}+\frac{1}{\gamma-1}\rho^{\gamma-1}=\frac{1}{2}\big(\theta_{L}(\kappa_{L}(\psi_{L,N}^{k}))\big)^2+\frac{1}{\gamma-1}\rho_{\infty}^{\gamma-1}:=B_{L}(\psi_{L,N}^{k}).
\end{align}

Let $\phi=\psi_{L,N}^{k}-\tilde{\psi}_{L,N}^{k}$, taking the difference between \eqref{P1} and \eqref{2.22-00} to get
\begin{align}\label{Q3}
&\operatorname{div}\Big(\frac{\nabla\psi_{L,N}^{k}}{(r+k)H_{L}\big(\big\vert\frac{\nabla\psi_{L,N}^{k}}{r+k}\big\vert^2,B_{L}(\psi_{L,N}^{k});\rho_{\infty}\big)}-\frac{\nabla\tilde{\psi}_{L,N}^{k}}{(r+k)H_{L}\big(\big\vert\frac{\nabla\tilde{\psi}_{L,N}^{k}}{r+k}\big\vert^2,B_{\infty};\rho_{\infty}\big)}\Big)\nonumber\\
&=(r+k)\theta_{L}(\psi_{L,N}^{k})\theta_{L}'(\psi_{L,N}^{k})H_{L}\big(\big\vert\frac{\nabla\psi_{L,N}^{k}}{r+k}\big\vert^2,B_{L}(\psi_{L,N}^{k});\rho_{\infty}\big).	
\end{align}
We split the terms in parentheses on the left-hand side as follows
\begin{align}\label{Q4}
&\frac{\nabla\psi_{L,N}^{k}}{(r+k)H_{L}\big(\big\vert\frac{\nabla\psi_{L,N}^{k}}{r+k}\big\vert^2,B_{L}(\psi_{L,N}^{k});\rho_{\infty}\big)}-\frac{\nabla\tilde{\psi}_{L,N}^{k}}{(r+k)H_{L}\big(\big\vert\frac{\nabla\tilde{\psi}_{L,N}^{k}}{r+k}\big\vert^2,B_{\infty};\rho_{\infty}\big)}\nonumber\\
&=\frac{\nabla\psi_{L,N}^{k}}{(r+k)H_{L}\big(\big\vert\frac{\nabla\psi_{L,N}^{k}}{r+k}\big\vert^2,B_{L}(\psi_{L,N}^{k});\rho_{\infty}\big)}-\frac{\nabla\tilde{\psi}_{L,N}^{k}}{(r+k)H_{L}\big(\big\vert\frac{\nabla\tilde{\psi}_{L,N}^{k}}{r+k}\big\vert^2,B_{L}(\tilde{\psi}_{L,N}^{k});\rho_{\infty}\big)}\nonumber\\
&\quad +\frac{\nabla\tilde{\psi}_{L,N}^{k}}{(r+k)H_{L}\big(\big\vert\frac{\nabla\tilde{\psi}_{L,N}^{k}}{r+k}\big\vert^2,B_{L}(\tilde{\psi}_{L,N}^{k});\rho_{\infty}\big)}-\frac{\nabla\tilde{\psi}_{L,N}^{k}}{(r+k)H_{L}\big(\big\vert\frac{\nabla\tilde{\psi}_{L,N}^{k}}{r+k}\big\vert^2,B_{\infty};\rho_{\infty}\big)}\nonumber\\
&=\sum\limits_{i=1}^2J_{i}.
\end{align}
For $J_{1}$, denoting $\psi_{L,N,t}^{k}=t\psi_{L,N}^{k}+(1-t)\tilde{\psi}_{L,N}^{k}$, $\mathcal{M}_{L,N,t}=\frac{|\nabla \psi_{L,N,t}^{k}|^2}{(r+k)^2}$ and using the fact
$$
f(\nabla \psi_{L,N},\psi_{L,N}^{k})-f(\nabla \tilde{\psi}_{L,N}^{k},\tilde{\psi}_{L,N}^{k})=\int_{0}^{1}
\frac{d}{dt}f(\nabla\psi_{L,N,t}^{k} ,\psi_{L,N,t}^{k})\,{\rm d}t,
$$
we can rewrite $J_{1}$ as
\begin{align}\label{Q4-1}
J_{1}=a_{ij}(\nabla \psi_{L,N,t}^{k},\psi_{L,N,t}^{k})\partial_{j}\phi+b_{i}(\nabla \psi_{L,N,t}^{k},\psi_{L,N,t}^{k})\phi,
\end{align}
where
\begin{align*}
a_{ij}(\nabla \psi_{L,N,t}^{k},\psi_{L,N,t}^{k})&=\int_{0}^{1}\frac{1}{(r+k)H_{L}(\mathcal{M}_{L,N,t}^{k},\psi_{L,N,t}^{k})}\delta_{ij}\,{\rm d}t\\
&\quad -\int_{0}^{1}\frac{1}{(r+k)H_{L}^2(\mathcal{M}_{L,N,t}^{k},\psi_{L,N,t}^{k})}\frac{\partial H_{L}}{\partial\mathcal{M}}\frac{2\partial_{i}\psi_{L,N,t}^{k}\partial_{j}\psi_{L,N,t}^{k}}{(r+k)^2}\,{\rm d}t,\\
b_{i}(\nabla \psi_{L,N,t}^{k},\psi_{L,N,t}^{k})&=-\int_{0}^{1}\frac{\partial H_{L}}{\partial\psi}\frac{\partial_{i}\psi_{L,N,t}^{k}}{(r+k)H_{L}^2(\mathcal{M}_{L,N,t}^{k},\psi_{L,N,t}^{k})}\,{\rm d}t,
\end{align*}
with
$$
\frac{\partial H_{L}}{\partial\mathcal{M}}=-\frac{H_{L}(\mathcal{M}_{L,N,t}^{k},\psi_{L,N,t}^{k})}{2(H_{L}^{\gamma+1}(\mathcal{M}_{L,N,t}^{k},\psi_{L,N,t}^{k})-\mathcal{M}_{L,N,t}^{k})},\quad \frac{\partial H_{L}}{\partial \psi}=\frac{\theta_{L}(\psi_{L,N,t}^{k})\theta_{L}'(\psi_{L,N,t}^{k})H_{L}^3(\mathcal{M}_{L,N,t}^{k},\psi_{L,N,t}^{k})}{H_{L}^{\gamma+1}(\mathcal{M}_{L,N,t}^{k},\psi_{L,N,t}^{k})-\mathcal{M}_{L,N,t}^{k}}.
$$
For $J_{2}$, using the intermediate value theorem, one has
\begin{align}\label{Q5-1}
J_{2}&=\frac{\nabla \tilde{\psi}_{L,N}^{k}}{r+k}\frac{H_{L}(\big\vert\frac{\nabla\tilde{\psi}_{L,N}^{k}}{r+k}\big\vert^2,B_{\infty})-H_{L}(\big\vert\frac{\nabla\tilde{\psi}_{L,N}^{k}}{r+k}\big\vert^2,B_L(\tilde{\psi}_{L,N}^{k})}{H_{L}(\big\vert\frac{\nabla\tilde{\psi}_{L,N}^{k}}{r+k}\big\vert^2,B_{\infty})H_{L}(\big\vert\frac{\nabla\tilde{\psi}_{L,N}^{k}}{r+k}\big\vert^2,B_L(\tilde{\psi}_{L,N}^{k}))}\nonumber\\
&=-\frac{\nabla \tilde{\psi}_{L,N}^{k}}{r+k}\frac{1}{H_{L}(\big\vert\frac{\nabla\tilde{\psi}_{L,N}^{k}}{r+k}\big\vert^2,B_{\infty})H_{L}(\big\vert\frac{\nabla\tilde{\psi}_{L,N}^{k}}{r+k}\big\vert^2,B_L(\tilde{\psi}_{L,N}^{k}))}\frac{\partial H_{L}}{\partial B_{L}}\Big\vert_{(\big\vert\frac{\nabla\tilde{\psi}_{L,N}^{k}}{r+k}\big\vert^2,\xi)}(B_{L}(\tilde{\psi}_{L,N}^{k})-B_{\infty})\nonumber\\
&=:{\bf d}(\nabla \tilde{\psi}_{L,N}^{k},\tilde{\psi}_{L,N}^{k})(B_{L}(\tilde{\psi}_{L,N}^{k})-B_{\infty}).
\end{align}
Substituting \eqref{Q4-1} and \eqref{Q5-1} into \eqref{Q3}, we have
\begin{align}\label{Q3-1}
&\partial_{i}[a_{ij}(\nabla \psi_{L,N,t}^{k},\psi_{L,N,t}^{k})\partial_{j}\phi+b_{i}(\nabla \psi_{L,N,t}^{k},\psi_{L,N,t}^{k})\phi]\nonumber\\
&\quad =-\partial_{i}\big[d_{i}(\nabla \tilde{\psi}_{L,N}^{k},\tilde{\psi}_{L,N}^{k})(B_{L}(\tilde{\psi}_{L,N}^{k})-B_{\infty}\big]\nonumber\\
&\qquad +(r+k)H_{L}(\big\vert\frac{\nabla \psi_{L,N}^{k}}{r+k}\big\vert^2,B_{L}(\psi_{L,N}^{k});\rho_{\infty})B_{L}'(\psi_{L,N}^{k}).
\end{align}

Obviously, \eqref{Q3-1} is a second-order elliptic equation with all coefficients $a_{ij}$, $b_{i}$ and $d_{i}$ being bounded and H\"{o}lder continuous. Moreover, on the boundary, it follows from \eqref{P1} and \eqref{2.22-00} that
$$
|\phi|\leq C\|u_{\infty,L}'\|_{C^{0}[0,L]}.
$$
Thus, using the classical Schauder estimates, we have
\begin{align}\label{E7-2}
\|\phi\|_{C^{2,\alpha}(B_{\mathfrak{r}_{0}}(x,r))}=\|\psi_{L,N}^{k}-\tilde{\psi}_{L,N}^{k}\|_{C^{2,\alpha}(B_{\mathfrak{r}_{0}}(x,r))}\leq C_{L,\mathfrak{r}_{0},k}\|u_{\infty}'\|_{C^{0,1}[0,L]}\leq C_{L,\mathfrak{r}_{0},k}\delta,
\end{align}
provided that $\|u_{\infty,L}'\|_{C^{0,1}[0,L]}\leq \delta$. Here, $\mathfrak{r}_{0}$ is the uniform radius in the uniform interior sphere condition of $\Omega_{L,N}$, and $C_{L,k,\mathfrak{r}_{0}}$ depends only on $L$, $k$ and $\mathfrak{r}_{0}$ and is independent of the position $(x,r)$. Furthermore, it follows from \eqref{E7-1} and \eqref{E7-2} that
\begin{align*}
\partial_{r}\psi_{L,N}^{k}>0\quad \text{in $\overline{\Omega}_{L,N}$}
\end{align*}
provided $\delta<\delta_{*}$ sufficiently small. Here $\delta_{*}$ depends only on $L$, $k$, $\mathfrak{r}_{0}$ and $\min_{(x,r)\in \overline{\Omega}_{L,N}}\partial_{r}\tilde{\psi}_{L,N}^{k}$.

\textbf{Step 3}. In this step, we will show $\partial_{r}\psi_{L,N}^{k}>0$ in $\overline{\Omega}_{L,N}$ for any positive upstream velocity $u_{\infty}(r)$ based modified Bers' method. First, by using similar arguments as in the proof of \eqref{E1-1}, we have
\begin{align}\label{E1-2}
\partial_{r}\psi_{L,N}^{k}>0\quad \text{on $\partial\Omega_{L,N}$}
\end{align}
for any positive upstream velocity $u_{\infty}(r)$. Then, we apply $\partial_{r}$  to $\eqref{2.22-00}_{1}$ to see $U_{L,N}^{k}=\partial_{r}\psi_{L,N}^{k}$ will satisfy
\begin{align}\label{2.42-1}
&a_{ij}\partial_{ij}U_{L,N}^{k}+\partial_{r}\Big(1-\frac{(U_{L,N}^{k})^2}{(r+k)^2H_{L}^{\gamma+1}}\Big)\partial_{11}\psi+\partial_{r}(a_{12}+a_{21})\partial_{x}U_{L,N}^{k}+\partial_{r}\Big(1-\frac{(\pa_{x}\psi_{L,N}^{k})^2}{(r+k)^2H_{L}^{\gamma+1}}\Big)\partial_{2}U_{L,N}^{k}\nonumber\\
&\quad -\Big(\frac{\partial_{r}U_{L,N}^{k}}{r+k}-\frac{U_{L,N}^{k}}{(r+k)^2}\Big)-(r+k)^2\partial_{r}\big(\theta_{L}\theta_{L}'H_{L}^2)=2(r+k)\theta_{L}\theta_{L}'H_{L}^2,
\end{align}
where
$$
a_{ij}=\Big(1-\frac{|\nabla \psi_{L,N}^{k}|^2}{(r+k)^2H_{L}^{\gamma+1}}\Big)\delta_{ij}+\frac{1}{H_{L}^{\gamma+1}}\frac{\pa_{i}\psi_{L,N}^{k}}{r+k}\frac{\pa_{j}\psi_{L,N}^{k}}{r+k}.
$$
A direct calculation shows that \eqref{2.42-1} can be written in following form:
\begin{align}\label{2.43-1}
a_{ij}\partial_{ij}U_{L,N}^{k}+b_{i}\partial_{i}U_{L,N}^{k}+cU_{L,N}^{k}=2(r+k)\theta_{L}\theta_{L}'H_{L}^2\leq 0,
\end{align}
due to the fact $u_{\infty}'(r)\leq 0$.
Applying the strong maximum principle without requiring the sign of zeroth order term $cU_{L,N}^{k}$ to \eqref{2.43-1}, we know if the solution $U_{L,N}^{k}\geq 0$ in $\overline{\Omega}_{L,N}$, then $U_{L,N}^{k}$ cannot attain the minimum value zero in $\Omega_{L,N}$, \textit{i.e.}, $\partial_{r}\psi_{L,N}^{k}>0$ in $\overline{\Omega}_{L,N}$.

Let $\delta_{0}$ be the largest positive number such that if $\delta<\delta_{0}$ and $\|u_{\infty,L}'\|_{C^{0,1}[0,L]}\leq \delta$, then $\partial_{r}\psi_{L,N}^{k}>0$ in $\overline{\Omega}_{L,N}$. Assume there exists a upstream flow $u_{\infty,L}(r)$ satisfying $\|u_{\infty,L}'\|_{C^{0,1}[0,L]}=\delta_{0}$ such that the associated solution $\psi_{L,N}^{k}$ satisfies $\partial_{r}\psi_{L,N}^{k}\leq 0$ at one point $(x_{0},r_{0})$ in $\Omega_{L,N}$. Then, by the definition of $\delta_{0}$, there exists a sequence $u_{\infty,L}^{(n)}\to u_{\infty,L}$ satisfying $\|u_{\infty,L}^{(n)}\|_{C^{1,1}[0,L]}\leq \delta_{0}$, and the associated solution $(\psi_{L,N}^{k,(n)})_{r}>0$ in $\overline{\Omega}_{L,N}$. Then by the continuous dependence on $u_{\infty}$, we know $\psi_{L,N}^{k,(n)}\to \psi_{L,N}^{k}$ in $C^{2,\alpha}(\Omega_{L,N})$ as $n\to \infty$. Therefore $\partial_{r}\psi_{L,N}^{k}\geq 0$ in $\Omega_{L,N}$. Then we obtain from the strong maximum principle and \eqref{E1-2} that $(\psi_{L,N}^{k})_r>0$, which is a contradiction. Hence, \eqref{E1-2} holds for any positive upstream velocity $u_{\infty}\in C^2([0,\infty))$ satisfying $u_{\infty}'(r)\leq 0$.

\textbf{Step 4}. Since Step 7 in the proof of Lemma \ref{lem2.1} shows the uniform boundedness of  $\psi_{L,N}^{k}$ in  $C^{2,\beta}(\Omega_{L})\cap C^{1,\beta}(\overline{\Omega}_{L})$, we know that $\psi_{L,N}^{k}\to \psi_{L}^{k}$ in any compact set $ K\subset\Omega_{L}$ as $N\to \infty$ with
$$
\partial_{r}\psi_{L}^{k}\geq 0\quad \text{for all $(x,r)\in \overline{\Omega}_{L}$}.
$$
Then, applying the maximum principle and the Hopf point lemma on the boundary, we conclude \eqref{2.15-0}. Therefore, the proof of Lemma \ref{lem2.3} is complete. $\hfill\square$

\medskip

As a direct corollary, we have the following existence of a subsonic solution of \eqref{2.9}. 

\begin{lemma}[Existence of subsonic solution to \eqref{2.9}]\label{lem2.2}

There exists a constant $\rho_{\infty,L}^{*}\in (\rho_{\infty}^{*},\infty)$ depending only on $L$ such that if $\rho_{\infty}>\rho_{\infty,L}^{*}$, there exists a solution $\psi_{L}\in C^{2,\beta}(\Omega_{L})\cap C^{1,\beta}(\overline{\Omega}_{L})$ with $\beta\in (0,\nu)$ to the problem \eqref{2.9} satisfying
\begin{align}\label{2.14-1}
	0\leq \psi_{L}\leq m_{L},\quad \text{in }\overline{\Omega}_{L},
\end{align}
and
\begin{align}\label{2.15-1}
	\frac{|\nabla\psi_{L}|}{r{H}^{\frac{\gamma+1}{2}}(\mathcal{M},\psi_{L};\rho_{\infty})}< 1-2\varepsilon_{0}.
\end{align}
Moreover, it holds that
\begin{align}\label{2.14-2}
	\partial_{r}\psi_{L}>0\quad \text{for any $(x,r)\in \overline{\Omega}_{L}\cap \{(x,r)\,|\,r>0\}$.}
\end{align}
\end{lemma}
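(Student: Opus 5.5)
The plan is to pass to the limit $k\to 0$ in the family $\{\psi_{L}^{k}\}$ constructed in Lemma \ref{lem2.1}. Throughout we fix $\rho_{\infty}>\rho_{\infty,L}^{*}$, the threshold of Lemma \ref{lem2.1}, which is independent of $k$.

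\textbf{Uniform bounds in $k$.} From Lemma \ref{lem2.1} we have, uniformly in $k$:
\begin{itemize}
\item $0\leq\psi_{L}^{k}\leq m_{L}$;
\item the barrier bound $\psi_{L}^{k}\leq C_{L}\rho_{\infty}(r+k)^2$ from \eqref{2.27};
\item the gradient bound $|\nabla\psi_{L}^{k}|\leq C_{L}\rho_{\infty}(r+k)$ from \eqref{2.29-2};
\item the subsonic bound \eqref{2.15}, with a constant independent of $k$.
\end{itemize}
The subsonic bound \eqref{2.15} means that the truncation $\chi_{0}$ is inactive. Hence $\psi_{L}^{k}$ solves the untruncated divergence-form equation \eqref{2.29-0}, and this equation is uniformly elliptic with ellipticity ratio depending only on $\varepsilon_{0}$.

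\textbf{Regularity and compactness.} Away from the axis, on $\{r>\delta\}$, the coefficients $1/(r+k)$ are bounded uniformly in $k$. Interior and boundary Schauder estimates, as in Step 3 of Lemma \ref{lem2.1}, together with $f\in C^{2,\nu}$, then give $\|\psi_{L}^{k}\|_{C^{2,\beta}}$ bounds uniform in $k$ on compact subsets of $\overline{\Omega}_{L}\cap\{r>\delta\}$.

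Near the axis we use the rescaling of Step 5 of Lemma \ref{lem2.1}, $\psi_{0}=\xi^{-2}\psi_{L}^{k}(x_{0}+x\xi,r_{0}+r\xi+k)$. This turns \eqref{2.29-0} into a uniformly elliptic equation on $B_{1}$ whose coefficients and right-hand side are controlled independently of $k$ and $r_{0}$, and $\psi_{0}$ is bounded by the barrier. Moser iteration and De Giorgi--Nash and Schauder theory (after the gradient bound, the coefficients are H\"older) give a uniform $C^{1,\beta}$ bound for $\psi_{0}$. Undoing the scaling, this yields uniform H\"older bounds for $\nabla\psi_{L}^{k}/(r+k)$ up to $r=0$.

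By Arzel\`a--Ascoli and a diagonal argument we extract $k_{j}\to0$ with $\psi_{L}^{k_{j}}\to\psi_{L}$ in $C^{2,\beta}_{loc}(\Omega_{L})\cap C^{1,\beta}(\overline{\Omega}_{L})$. Passing to the limit in the equation gives that $\psi_{L}$ solves \eqref{2.7}, equivalently \eqref{2.9}, with the boundary data of \eqref{2.9}. The bounds \eqref{2.14-1} and \eqref{2.15-1} follow from the uniform ones: the quantity $|\nabla\psi_{L}^{k}|/((r+k)H^{(\gamma+1)/2})$ converges pointwise for $r>0$ and stays below $1-2\varepsilon_{0}$. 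To get the strict inequality in \eqref{2.15-1}, we rerun Step 6 of Lemma \ref{lem2.1} with $1-2\varepsilon_{0}$ replaced by a slightly smaller level, adjusting $\rho_{\infty,L}^{*}$ if necessary.

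\textbf{Positivity of $\partial_{r}\psi_{L}$.} This is the main obstacle. Lemma \ref{lem2.3} gives $\partial_{r}\psi_{L}^{k}>0$, so in the limit $\partial_{r}\psi_{L}\geq0$ on $\overline{\Omega}_{L}$. To upgrade to strict positivity on $\{r>0\}$ we proceed as follows.
\begin{itemize}
\item In the interior, $U=\partial_{r}\psi_{L}$ satisfies the analogue of \eqref{2.43-1} with $k=0$. This equation is uniformly elliptic with bounded coefficients on compact subsets of $\{r>0\}$, and its right-hand side is $\leq0$ because $u_{\infty}'\leq0$. The strong maximum principle, which does not need the sign of $c$ since $U\ge0$, shows that $U$ cannot vanish at an interior point unless $U\equiv0$ on a connected component.
\item $U\equiv0$ is excluded because it contradicts the boundary data $\psi_{L}=0$ on $\Gamma$ and $\psi_{L}=m_{L}>0$ on $\Gamma_{L}$.
\item On $\Gamma_{L}$, and on $\Gamma$ where $r=f(x)>0$, the function $\psi_{L}$ attains its maximum, respectively its minimum, over $\overline{\Omega}_{L}$. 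The Hopf lemma applied to the equation for $\psi_{L}$, whose zeroth-order source term we handle via \eqref{2.10} as in Step 2 of Lemma \ref{lem2.1}, gives a strictly positive normal derivative there, hence $\partial_{r}\psi_{L}>0$.
\item Points of $\Gamma$ with $r=0$ are excluded from \eqref{2.14-2}, so nothing is needed there.
\end{itemize}
The delicate point is justifying the $k$-uniform $C^{2,\beta}$ control near the axis so that the limit equation and the limiting sign hold up to $r=0$. Our first attempt is the rescaling/Moser argument described above, relying only on the $k$-independent barrier \eqref{2.27}.
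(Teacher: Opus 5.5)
Your proposal is correct and follows the paper's proof. You pass to the limit $k\to0$ in the solutions $\psi_L^k$ of Lemma \ref{lem2.1}, inherit $\partial_r\psi_L\ge0$ from Lemma \ref{lem2.3}, and upgrade to strict positivity: on $\Gamma_L$ and on $\{r=f(x)>0\}$ by the Hopf lemma, and in $\Omega_L$ by the strong maximum principle (no sign condition on $c$) for the equation of $U=\partial_r\psi_L$, whose right-hand side is $\le0$ because $u_\infty'\le0$. Your extra steps make explicit points the paper passes over: $k$-uniform compactness near the axis via the Step 5 rescaling, rerunning Step 6 at a slightly lower level so that \eqref{2.15-1} stays strict in the limit, and ruling out $U\equiv0$. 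Likewise, your appeal to \eqref{2.10} on $\Gamma_L$ works precisely because $\theta_L'(m_L)=0$ and $\theta_L'$ is Lipschitz, so the source $r^2\theta_L\theta_L'H_L^2\le0$, which has the wrong sign for Hopf at a maximum, can be rewritten as a bounded multiple of $m_L-\psi_L$.
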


\noindent\textbf{Proof}. By taking the limit $k\to 0$, \eqref{2.14-1}--\eqref{2.15-1} follow directly from \eqref{2.14}--\eqref{2.15}. We only need to prove \eqref{2.14-2}. First, taking the limit $k\to 0$ in \eqref{2.15-0}, we have
\begin{align}\label{2.41}
\partial_{r}\psi_{L}^{k}\geq 0\quad \text{for any $(x,r)\in \overline{\Omega}_{L}$}.
\end{align}
Noting $\psi_{L}$ attains the maximum value at upper boundary $\Gamma_{L}$, and attains the minimum value at the lower boundary $\{(x,r)\,|r=f(x)\text{ for }x\in (0,1)\}$ we obtain from the Hopf point lemma that
\begin{align}\label{2.41-1}
\partial_{r}\psi_{L}>0\quad \text{at }\Gamma\cup \{(x,r)\,|\,r=f(x)\text{ for }x\in (0,1)\}.
\end{align}

Now, we denote $U=\partial_{r}\psi_{L}$, and apply $\partial_{r}$ to \eqref{2.9} to see that $U$ satisfies following equation:
\begin{align}\label{2.42}
&a_{ij}\partial_{ij}U+\partial_{r}\Big(1-\frac{U^2}{r^2H_{L}^{\gamma+1}}\Big)\partial_{11}\psi+\partial_{r}(a_{12}+a_{21})\partial_{1}U+\partial_{r}\Big(1-\frac{\psi_{1}^2}{r^2H_{L}^{\gamma+1}}\Big)\partial_{2}U\nonumber\\
&\quad -\Big(\frac{\partial_{2}U}{r}-\frac{U}{r^2}\Big)-r^2\partial_{r}\big(\theta_{L}\theta_{L}'H_{L}^2)=2r\theta_{L}\theta_{L}'H_{L}^2,
\end{align}
where
$$
a_{ij}=\Big(1-\frac{|\nabla\psi_{L}|^2}{rH_{L}^{\gamma+1}}\Big)\delta_{ij}+\frac{1}{H_{L}^{\gamma+1}}\frac{\psi_{i}}{r}\frac{\psi_{j}}{r}.
$$
A direct calculation shows that \eqref{2.42} can be written in the following form:
\begin{align}\label{2.43}
a_{ij}\partial_{ij}U+b_{i}\partial_{i}U+cU=2r\theta_{L}\theta_{L}'H_{L}^2\leq 0
\end{align}
due to the fact $u_{\infty}'(r)\leq 0$.
Then, using \eqref{2.41} and applying the strong maximum principle without requiring the sign of $c$ to \eqref{2.43}, we can know that $U$ cannot attain the minimum value zero in $\Omega_{L}$, \textit{i.e.},
\begin{align}\label{2.44}
U=\partial_{r}\psi_{L}>0\quad \text{in }\Omega_{L},
\end{align}
which, together with \eqref{2.41-1}, yields \eqref{2.14-2}. This concludes the proof of Lemma \ref{lem2.2}. $\hfill\square$

\section{Uniform Estimates for Subsonic Flows in Nozzles}

In this section, we will establish several uniform estimates in $L$ to prove that $\rho_{\infty,L}^{*}$ in Lemma \ref{lem2.2} can be taken independent of $L$. Consequently, we can take the limit $L\to \infty$ to establish the existence of problem \eqref{1.13}. Precisely, we shall prove the following proposition.

\begin{proposition}\label{prop4.1}
There exists a $\bar{\rho}_{\infty}\in (\rho_{\infty}^{*},\infty)$ independent of $L$ such that if $\rho_{\infty}>\bar{\rho}_{\infty}$, the problem \eqref{1.13} has a subsonic solution $\psi$ satisfying
\begin{align}\label{2.33}
	0\leq \psi\leq \bar{\psi}(r),\quad |\psi-\bar{\psi}|_{C^{1}(\overline{\Omega})}\leq \mathscr{C}\rho_{\infty}r,\quad \iint_{\Omega}\frac{|\nabla (\psi-\bar{\psi})|^2}{r}\,{\rm d}x{\rm d}r\leq \mathscr{C}\rho_{\infty},
\end{align}
and
\begin{align}\label{2.34}
	\sup_{(x,r)\in \overline{\Omega}}\frac{|\nabla\psi|}{rH^{\frac{\gamma+1}{2}}(\mathcal{M},\psi;\rho_{\infty})}< 1-2\varepsilon_{0},
\end{align}
where
$$
\bar{\psi}(r)=\rho_{\infty}\int_{0}^{r}u_{\infty}(s)s\,{\rm d}s,
$$
and the constant $\mathscr{C}$ depends only on $\max f(x)$, $\max u_{\infty}(r)$ and $\varepsilon_{0}$.
\end{proposition}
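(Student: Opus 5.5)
We obtain Proposition \ref{prop4.1} by deriving estimates for the nozzle solutions $\psi_L$ of Lemma \ref{lem2.2} that are uniform in $L$. The key is that the critical density $\rho_{\infty,L}^{*}$ can then be replaced by one independent of $L$, after which we let $L\to\infty$. As in \cite{CDXX-2016}, we work with the difference $\phi_L=\psi_L-\bar\psi_L$, where $\bar\psi_L(r)=\rho_\infty\int_0^r u_{\infty,L}(s)s\,{\rm d}s$ is the upstream stream function. It solves the same equation \eqref{2.7} with $\nabla\bar\psi_L=(0,r\rho_\infty u_{\infty,L})$, so $H_L(\mathcal{M}(\bar\psi_L),\bar\psi_L)=\rho_\infty$ identically. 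Subtracting the two divergence-form equations and writing the difference by the mean value formula, exactly as in \eqref{Q4}--\eqref{Q3-1}, gives a linear divergence-form equation
$$\partial_i\big(a_{ij}\partial_j\phi_L+b_i\phi_L\big)+c\,\phi_L=0,$$
with $\phi_L=-\bar\psi_L$ on $\Gamma$ (supported in $0\le x\le 1$, $r\le J$) and $\phi_L=0$ on $\Gamma_L$. The coefficients behave like $a_{ij}\sim (r\rho_\infty)^{-1}$, with ellipticity ratio controlled by $\varepsilon_0$ through the subsonic truncation.

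\textbf{Step 1: $L^\infty$ bound.} From \eqref{2.14-1} and the comparison principle (using $\theta_L'\le0$, so the source has a fixed sign), we get $0\le\psi_L\le\bar\psi_L$. For the weighted bound $|\phi_L|\le\mathscr{C}\rho_\infty r^2$ near the axis and $|\phi_L|\le \mathscr{C}\rho_\infty$ away from it, we compare with barriers of the form $\rho_\infty(\mathcal{A}-\mathcal{B}r)r^2$, as in Step 5 of Lemma \ref{lem2.1}.

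\textbf{Step 2: weighted energy estimate.} We test the equation for $\phi_L$ with $\phi_L-\eta$, where $\eta$ is a fixed cutoff extension of the boundary data supported near the obstacle. The $b_i$ terms are handled as follows: $b_i$ involves $\theta_L\theta_L'$, and the term $\iint b_i\phi_L\partial_i\phi_L$ equals a divergence term plus a zeroth-order term whose sign should come from the structural condition \eqref{1.3-4} (equivalently $(u_{\infty,L}'/r)'\ge0$). This is precisely where \eqref{1.3-4} enters, just as convexity does in 2D. The outcome is $\iint_{\Omega_L}|\nabla\phi_L|^2/r\le\mathscr{C}\rho_\infty$, uniformly in $L$.

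\textbf{Step 3: H\"older gradient estimate.} Interior and boundary Schauder/De Giorgi estimates on unit balls, combined with Step 2, give $|\nabla\phi_L|\le\mathscr{C}\rho_\infty r$ for $r\ge\delta_0$. Near the axis the source terms of the $\phi_L$-equation lack the decay needed. There we instead treat $\psi_L$ itself, using the rescaling $\psi_0=\xi^{-2}\psi_L(x_0+\xi x,r_0+\xi r)$ and Moser iteration as in Step 5 of Lemma \ref{lem2.1}, to get $|\nabla\psi_L|/r\le \mathscr{C}\rho_\infty$.

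\textbf{Step 4: conclusion.} Combining these, $|\nabla\psi_L|/(rH^{\frac{\gamma+1}{2}})\le \mathscr{C}\rho_\infty^{\frac{1-\gamma}{2}}$ with $\mathscr{C}$ independent of $L$. The continuity argument of Step 6 in Lemma \ref{lem2.1} then yields $\bar\rho_\infty$ independent of $L$ such that \eqref{2.34} holds and the truncation is inactive. Finally, Arzel\`a--Ascoli and a diagonal argument in $L$ give a limit $\psi$ solving \eqref{1.13}. The bounds \eqref{2.33} pass to the limit by lower semicontinuity, and $u_{\infty,L}\to u_\infty$ locally.

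The main obstacle is Step 2. Controlling the lower-order term $b_i\phi_L\partial_i\phi_L$ and the $\theta_L\theta_L'$ source with the weight $1/r$, uniformly in $L$ and without any smallness of the vorticity, is the delicate part. I would first try to rewrite that term as $\frac12\partial_i(b_i)\phi_L^2$ modulo a divergence, and then verify that $\partial_i b_i$, together with the zeroth-order coefficient, has a good sign under \eqref{1.3-4}.
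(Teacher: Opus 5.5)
\textbf{There is a genuine gap in your Step 1.} Your plan never bounds $\bar\psi_L-\psi_L$ from above uniformly in $L$ away from the axis, and every later step depends on that bound. A barrier of the form $\rho_\infty(\mathcal A-\mathcal B r)r^2$ cannot supply it:
\begin{itemize}
\item In Step 5 of Lemma~\ref{lem2.1} this function is an \emph{upper} barrier for $\psi$, not a lower one.
\item Its constants necessarily depend on $L$: one needs $\tilde\psi\ge m_L\sim\rho_\infty L^2$ on $\Gamma_L$, while $\mathcal B$ must absorb a source of size $C_L\rho_\infty(r+k)$.
\item It only gives $\psi\le C_L\rho_\infty r^2$, which is weaker than $\psi_L\le\bar\psi_L$.
\end{itemize}

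The paper gets the lower bound from a new comparison function. This is the parallel subsonic flow $\hat\psi_L(r)=\rho_{1,L}\int_J^r u_{1,L}(s)s\,{\rm d}s$ on the strip $J<r<L$ (Proposition~\ref{prop1}), built by matching mass flux and Bernoulli constant streamline by streamline. Solving for $\rho_{1,L}$ is where the conditions $L>\bar L$ and $\rho_\infty\ge(1+\varepsilon)\rho_\infty^*$ come from. The function $\hat\psi_L$ has three properties:
\begin{itemize}
\item it solves the same equation;
\item it satisfies $\hat\psi_L=0\le\psi_L$ at $r=J$ and $\hat\psi_L=m_L$ at $r=L$;
\item it satisfies $0\le\bar\psi_L-\hat\psi_L\le C\rho_\infty$, since $0\le r^2-(\chi^{-1}(r))^2\le J^2$.
\end{itemize}
Comparison then gives $\psi_L\ge\hat\psi_L$ (Proposition~\ref{prop2}), hence Lemma~\ref{lem1}.

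This bound is load-bearing. Through $r^2-\kappa_L^2(\psi_L)\le C$ it yields the source bound $r^2\theta_L\theta_L'H_L^2\le C\rho_\infty r$ in \eqref{4.7}; a priori you only have $C\rho_\infty r^2$. It is also the zeroth-order input of the gradient estimate in Lemma~\ref{lem2}. Without it, your Step 3 does not give $|\nabla\psi_L|/r\le\mathscr C\rho_\infty$ independently of $L$, and your Step 4 does not produce an $L$-independent $\bar\rho_\infty$.

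\textbf{Two further corrections concern your Step 2.}

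First, neither $\psi_L\le\bar\psi_L$ nor $\psi_L\ge\hat\psi_L$ follows from a classical comparison principle, because the principal coefficients depend on $\psi$ through $H_L(\mathcal M,\psi)$. Both come from the energy identity \eqref{4.5}, tested with $(\psi_L-\bar\psi_L)^+$ and $(\hat\psi_L-\psi_L)^+$.

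Second, the linearized equation is $\partial_i(a_{ij}\partial_j\phi+b_i\phi)=c_i\partial_i\phi+d\phi$. You dropped the first-order term, and it is exactly what pairs with $b_i$. Integrating $b_i\phi\,\partial_i\phi$ by parts leads nowhere, since $\partial_ib_i$ contains second derivatives of $\psi_L$ and has no sign. Instead, $b_i=c_i$ by \eqref{4.4-1}, and so
\[
a_{ij}\partial_i\phi\partial_j\phi+2b_i\phi\partial_i\phi+d_2\phi^2=\int_0^1\Big[\frac{|\nabla\phi|^2}{rH_L}+\frac{(r^2H_L^2\theta_L\theta_L'\phi-\nabla\psi_{L,t}\cdot\nabla\phi)^2}{r^3H_L(H_L^{\gamma+1}-\mathcal M_{L,t})}\Big]{\rm d}t,
\]
with everything evaluated along the convex combination $\psi_{L,t}$ of the two solutions. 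The structural condition \eqref{1.3-4} enters only through $d_1\ge0$, that is,
\[
\theta_L\theta_L''+(\theta_L')^2=\frac{u_{\infty,L}''(\kappa_L)\kappa_L-u_{\infty,L}'(\kappa_L)}{\rho_\infty^2\kappa_L^3u_{\infty,L}(\kappa_L)}\ge0.
\]
On the unbounded strip you also need the far-field decay \eqref{4.8} to discard the contributions at $x=\pm K$.

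Your near-axis rescaling with Moser iteration, and your final continuity and compactness steps, agree with the paper.
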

The proof of Proposition \ref{prop4.1} is based on the following propositions and lemmas.

\begin{proposition}\label{prop1}
For any $\varepsilon>0$, there exists and $\bar{L}>0$ such that if $L>\bar{L}$, then for any $\rho_{\infty}\geq (1+\varepsilon)\rho_{\infty}^{*}$, there exists a unique triple $(\chi(s),\rho_{1,L},u_{1,L})$ satisfying that
\begin{itemize}
	\item [(1)] $\rho_{1,L}$ is a constant, $\chi$ is an increasing function mapping $[0,L]$ to $[J,L]$, and $u_{1,L}$ is a function on $[J,L]$, where $J=\sup f(x)$;
	\item [(2)] the triple $(\chi(s),\rho_{1,L},u_{1,L})$ satisfies
	\begin{align}\label{3.1}
		0<\rho_{1,L}<\rho_{\infty},\quad u_{1,L}(r)>0\quad \text{and }\max_{J\leq r\leq L}u_{1,L}(r)<\rho_{1,L}^{\frac{\gamma-1}{2}};
	\end{align}
	\item [(3)] for $s\in [0,L]$, $(\chi(s),\rho_{1,L},u_{1,L}(\chi(s)))$ satisfies
	\begin{align}\label{3.2}
		\frac{u_{\infty,L}^2(s)}{2}+h(\rho_{\infty})=\frac{u_{1,L}^2(\chi(s))}{2}+h(\rho_{1,L}),
	\end{align}
	and
	\begin{align}\label{3.3}
		\rho_{\infty}\int_{0}^{s}u_{\infty,L}(s)s\,{\rm d}s=\rho_{1,L}\int_{J}^{\chi(s)}u_{1,L}(s)s\,{\rm d}s.
	\end{align}
	Furthermore, let
	\begin{align}\label{3.4}
		\bar{\psi}(r)=\rho_{\infty}\int_{0}^{r}u_{\infty,L}(s)s\,{\rm d}s\quad \text{and}\quad \hat{\psi}(r)=\rho_{1,L}\int_{J}^{r}u_{1,L}(s)s\,{\rm d}s.
	\end{align}
	Then for $r\in [J,L]$, it holds that
	\begin{align}\label{3.5}
		0\leq \bar{\psi}(r)-\hat{\psi}(r)\leq C\rho_{\infty},
	\end{align}
	where $C$ is a uniform constant depending only on $J$ and $\max u_{\infty}$.
\end{itemize}
\end{proposition}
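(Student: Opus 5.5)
The plan is to reduce the triple to a one-parameter problem in the constant $\rho_{1,L}$, solve that problem by monotonicity and the intermediate value theorem, and then read \eqref{3.5} off from an explicit formula for $\chi^2$.

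\textbf{Step 1: reduction to a scalar equation.} Fix a trial constant $\rho_1\le\rho_\infty$. The Bernoulli relation \eqref{3.2} then forces
$$U(s;\rho_1):=u_{1,L}(\chi(s))=\sqrt{u_{\infty,L}^2(s)+2\big(h(\rho_\infty)-h(\rho_1)\big)},$$
which is positive and at least $u_{\infty,L}(s)$. Differentiating \eqref{3.3} in $s$ gives $\rho_1U(s;\rho_1)\,\chi\chi'=\rho_\infty u_{\infty,L}(s)s$ with $\chi(0)=J$. Integrating,
$$\chi^2(s)=J^2+2\int_0^s \frac{t\,dt}{R(t;\rho_1)},\qquad R(t;\rho_1):=\frac{\rho_1U(t;\rho_1)}{\rho_\infty u_{\infty,L}(t)}.$$
Here $R$ is the ratio of mass fluxes on the same Bernoulli level. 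This $\chi$ is automatically increasing, and $u_{1,L}=U\circ\chi^{-1}$ is then well defined. So the whole problem reduces to the single scalar equation
$$G(\rho_1):=\chi^2(L)=L^2.$$

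\textbf{Step 2: monotonicity of $G$ and the endpoint values.} Restrict to $\rho_1\in[\rho_s,\rho_\infty]$. Here $\rho_s$ is the value at which $\max_t U(t;\rho_1)=\rho_1^{\frac{\gamma-1}{2}}$, i.e.\ the streamline with the largest $u_{\infty,L}$ becomes sonic first. On this interval every streamline lies on the subsonic branch, where the flux $\rho q$ at fixed Bernoulli constant strictly increases as $\rho$ decreases (Figure \ref{fig2}). Hence $R$ is strictly decreasing in $\rho_1$, so $G$ is strictly increasing in $\rho_1$, which gives uniqueness.

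At $\rho_1=\rho_\infty$ we have $R\equiv1$ and $G=J^2+L^2>L^2$. At $\rho_1=\rho_s$, the assumption $\rho_\infty\ge(1+\varepsilon)\rho_\infty^*$ means the upstream Mach number is at most $(1+\varepsilon)^{-\frac{\gamma-1}{2}}<1$. One then shows $R(t;\rho_s)\ge1+\eta$ for all $t$, with $\eta=\eta(\varepsilon)>0$.
\begin{itemize}
\item On the streamline carrying $\max u_{\infty,L}$, this is the strict flux gain from a subsonic state to the sonic one.
\item On the other streamlines, $\rho_s<\rho_\infty$ and they stay subsonic, so their flux strictly increases as well.
\end{itemize}
Uniformity in $t$ and in $L$ comes from $u_{\infty,L}\in[\bar u/2,\max u_\infty]$ together with compactness. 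Consequently $G(\rho_s)\le J^2+L^2/(1+\eta)<L^2$ once $L>\bar L(\varepsilon)$, and the intermediate value theorem produces $\rho_{1,L}\in(\rho_s,\rho_\infty)$. Since $\rho_{1,L}>\rho_s$, the strict subsonic bound in \eqref{3.1} holds.

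\textbf{Step 3: the estimate \eqref{3.5}.} By construction $\hat\psi(\chi(s))=\bar\psi(s)$.

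Lower bound: consider $\chi^2(s)-s^2=J^2-2\int_0^s t\,(1-R^{-1})\,dt$. Since $R\ge1$, this quantity is nonincreasing in $s$, and it vanishes at $s=L$. Hence $\chi(s)\ge s$ on $[0,L]$. Because $\bar\psi$ is increasing, this gives $\hat\psi(r)=\bar\psi(\chi^{-1}(r))\le\bar\psi(r)$.

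Upper bound: from $R\ge1$ we also get $\chi^2\le J^2+s^2$. Writing $r=\chi(s)$,
$$\bar\psi(r)-\hat\psi(r)\le\rho_\infty\max u_\infty\, r\,(r-s)\le\rho_\infty\max u_\infty\,\frac{r(r^2-s^2)}{r+s}\le\rho_\infty\max u_\infty\,J^2.$$

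\textbf{Main obstacle.} I expect the hardest step to be the uniform gap $R(t;\rho_s)\ge1+\eta$, independent of $L$ and $t$, including the streamlines far from the one that becomes sonic. I would handle it by an explicit monotonicity computation of $\rho q$ along the Bernoulli curve, combined with the bounds $u_{\infty,L}\in[\bar u/2,\max u_\infty]$.
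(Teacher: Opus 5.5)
Your proposal is correct and follows the paper's proof essentially step for step: the same reduction via \eqref{3.2} and the differentiated flux identity to a scalar equation for $\rho_{1,L}$, monotonicity of $G$ from the subsonic monotonicity of $\rho q$ at fixed Bernoulli constant, evaluation at $\rho_\infty$ and at the sonic density $\underline{\varrho}_{1,L}$ followed by the intermediate value theorem, and then monotonicity of $\chi^2(s)-s^2$ to obtain \eqref{3.5}. The uniform gap you flag as the main obstacle is exactly the paper's computation \eqref{3.10}--\eqref{3.11}: with $M_t^2=u_{\infty,L}^2(t)/\rho_\infty^{\gamma-1}\le m=\max u_{\infty,L}^2/\rho_\infty^{\gamma-1}$ one gets $R(t;\rho_s)=\big(\frac{\gamma-1}{\gamma+1}m+\frac{2}{\gamma+1}\big)^{\frac{1}{\gamma-1}}\big(1+\frac{2}{\gamma+1}\frac{1-m}{M_t^2}\big)^{\frac12}\ge\mathcal{G}(m)\ge\mathcal{G}\big((1+\varepsilon)^{1-\gamma}\big)>1$, so the minimum is attained on the fastest streamline and depends only on the upstream Mach number. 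This is what makes $\eta$ uniform in $\rho_\infty$ as well; compactness alone would not give this because the range of $\rho_\infty$ is unbounded, and the bound $u_{\infty,L}\ge\bar u/2$ is not needed.
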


\noindent\textbf{Proof}. The proof is similar to \cite[Proposition 3.3]{CDXX-2016}. For the reader's convenience, we sketch the proof here.

Let $\underline{\varrho}_{1,L}$ and $\overline{\varrho}_{1,L}$ be the constants determined by
\begin{align}\label{3.6}
\frac{1}{2}\underline{\rho}_{1,L}^{\gamma-1}+h(\underline{\rho}_{1,L})=\frac{1}{2}\max u_{0,L}^2(r)+h(\rho_{\infty}),\quad h(\overline{\rho}_{1,L})=\frac{1}{2}\min u_{0,L}^2(r)+h(\rho_{\infty}).	
\end{align}
If $\rho_{\infty}>\rho_{\infty}^{*}$, then it is easy to see that $\underline{\varrho}_{1,L}<\rho_{\infty}<\overline{\varrho}_{1,L}$. Furthermore, denoting 
$$
D(s;\rho)=2(h(\rho_{\infty})-h(\rho))+u_{0,L}^2(s),
$$
then for any $\rho\in (\underline{\rho}_{1,L},\overline{\rho}_{1,L})$, one has
$$
\min_{s\in [0,L]}D(s;\rho)>0\text{ and } \max_{s\in [0,L]}\sqrt{D(s;\rho)}<\rho^{\frac{\gamma-1}{2}}\quad \text{for }s\in [0,1].
$$	
Differentiating \eqref{3.3} with respect to $s$ and using \eqref{3.2} yields that
\begin{align}\label{3.7}
\frac{1}{2}\frac{d\chi^2(s)}{ds}=\frac{\rho_{0,L}u_{0,L}(s)s}{\rho_{1,L}\sqrt{D(s;\rho_{1,L})}}\quad \text{for }s\in [0,L]\quad \text{and }\chi(0)^2=J^2.
\end{align}
Hence, it suffices to show that there exists an $\bar{L}>0$ such that if $L>\bar{L}$, then for any $\rho_{0}\in ((1+\varepsilon)\rho_{0}^{*},\infty)$, there exists a unique $\rho_{1,L}\in (\underline{\rho}_{1,L},\overline{\rho}_{1,L})$ such that
\begin{align}\label{3.8}
\int_{0}^{L}\frac{\rho_{\infty}u_{0,L}(s)s}{\rho_{1,L}\sqrt{D(s;\rho_{1,L})}}\,{\rm d}s=\frac{1}{2}(L^2-J^2).
\end{align}
Once $\rho_{1,L}$ is determined, then existence of  $\chi(s)$ follows from \eqref{3.7} and $u_{1,L}(\chi(s))=\sqrt{D(s;\rho_{1,L})}$.

Let 
$$
G(\rho)=\int_{0}^{L}\frac{\rho_{\infty}u_{0,L}(s)}{\rho\sqrt{D(s;\rho)}}\,{\rm d}s.
$$
A direct computation yields that
\begin{align}\label{3.9}
G'(\rho)=\int_{0}^{L}\frac{\rho_{\infty}u_{0,L}(s)s}{\rho^2D^{\frac{3}{2}}(s;\rho)}\big(\rho^{\gamma-1}-D(s;\rho)\big)\,{\rm d}s,
\end{align}
which is always positive for $\rho\in (\underline{\varrho}_{1,L},\overline{\varrho}_{1,L})$. Thus $G(\rho)$ is a strict increasing function on $(\underline{\varrho}_{1,L},\overline{\varrho}_{1,L})$, which implies the range of $G(\rho)$ is $(G(\underline{\varrho}_{1,L}),G(\overline{\varrho}_{1,L}))$. We shall prove that $\frac{1}{2}(L^2-J^2)$ lies in the interval $(G(\underline{\varrho}_{1,L}),G(\overline{\varrho}_{1,L}))$ if $L$ is sufficiently large.

First, it is clear that $G(\rho_{\infty})=\frac{1}{2}L^2>\frac{1}{2}(L^2-J^2)$. Next, it follows from the definition of $\underline{\varrho}_{1,L}$ in \eqref{3.6} that
\begin{align}\label{3.10}
G(\underline{\varrho}_{1,L})&=\int_{0}^{L}\frac{\rho_{\infty}u_{0,L}(s)s}{\underline{\varrho}_{1,L}\sqrt{\underline{\rho}_{1,L}^{\gamma-1}+u_{0,L}^2(s)-\max u_{0,L}^2(s)}}\,{\rm d}s\nonumber\\
&=\int_{0}^{L}\frac{\rho_{\infty}u_{0,L}(s)s}{\big(\frac{\gamma-1}{\gamma+1}\max u_{0,L}^2(s)+\frac{2}{\gamma+1}\rho_{\infty}^{\gamma-1}\big)^{\frac{1}{\gamma-1}}\sqrt{\frac{2}{\gamma+1}\big(\rho_{\infty}^{\gamma-1}-\max u_{0,L}^2\big)+u_{0,L}^2(s)}}\,{\rm d}s\nonumber\\
&=\int_{0}^{L}\frac{s}{\big(\frac{\gamma-1}{\gamma+1}\frac{\max u_{0,L}^2(s)}{\rho_{\infty}^{\gamma-1}}+\frac{2}{\gamma+1}\big)^{\frac{1}{\gamma-1}}\sqrt{\frac{2}{\gamma+1}\big(\frac{\rho_{\infty}^{\gamma-1}}{\max u_{0,L}^2}-1\big)\frac{\max u_{0,L}^2}{u_{0,L}^2(s)}+1}}\,{\rm d}s.
\end{align}
When $\rho_{\infty}>\rho_{\infty}^{*}=\big(\max u_{0,L}^2\big)^{\frac{1}{\gamma-1}}$, one has
\begin{align}\label{3.11}
G(\underline{\varrho}_{1,L})\leq \int_{0}^{L}\frac{s}{\Big(\frac{\gamma-1}{\gamma+1}\frac{\max u_{0,L}^2(s)}{\rho_{\infty}^{\gamma-1}}+\frac{2}{\gamma+1}\Big)^{\frac{1}{\gamma-1}}\sqrt{\frac{2}{\gamma+1}\Big(\frac{\rho_{\infty}^{\gamma-1}}{\max u_{0,L}^2}-1\Big)+1}}\,{\rm d}s.
\end{align}

Define
\begin{align*}
\mathcal{G}(s)=\Big(\frac{\gamma-1}{\gamma+1}s+\frac{2}{\gamma+1}\Big)^{\frac{1}{\gamma-1}}\sqrt{\frac{2}{\gamma+1}\big(\frac{1}{s}-1\big)+1}.
\end{align*}
Direct calculations give
\begin{align*}
\mathcal{G}'(s)=\Big(\frac{\gamma-1}{\gamma+1}s+\frac{2}{\gamma+1}\Big)^{\frac{1}{\gamma-1}}\sqrt{\frac{2}{\gamma+1}\Big(\frac{1}{s}-1\Big)+1}\frac{s-1}{s(2+s(\gamma-1))},
\end{align*}
so $\mathcal{G}(s)$ is a decreasing function $[0,1]$. Noting $\mathcal{G}(1)=1$, one has that for any $\varepsilon>0$, there exists an $\bar{L}>0$ such that if $L>\bar{L}$, it holds that
$$
\mathcal{G}\Big(\frac{1}{(1+\varepsilon)^{\gamma-1}}\Big)\geq \frac{L^2}{L^2-J^2}.
$$
Therefore, for any $\rho_{\infty}>(1+\varepsilon)\rho_{\infty}^{*}$, there exists a unique $\rho\in (\underline{\varrho}_{1,L},\bar{\varrho}_{1,L})$ satisfying $G(\rho)=\frac{1}{2}(L^2-J^2)$.

Define $\xi(s)=\chi^2(s)-s^2$. It follows from \eqref{3.7} that
\begin{align}\label{3.12}
\xi'(s)=\frac{2(\rho_{\infty}u_{0,L}(s)-\rho_{1,L}u_{1,L}(s))s}{\rho_{1,L}u_{1,L}(\chi(s))}.	
\end{align}
Noting $\rho_{1,L}<\rho_{\infty}$, we have $\rho_{\infty}u_{0,L}(s)<\rho_{1,L}u_{1,L}(\chi(s))$ as the momentum is decreasing with the respect to $\rho$ in the subsonic region.
Therefore, $\xi(s)<0$ for $s>0$, which yields that
$0=\xi(L)\leq \xi(s)=\chi(s)^2-s^2\leq \xi(0)=J^2$. That is, for any $r\in [J,L]$, it holds that
$$
0\leq r^2-(\chi^{-1}(r))^2\leq J^2.
$$
Then, for $r\in [J,L]$, it follows from a straightforward calculation that
\begin{align*}
\bar{\psi}(r)-\hat{\psi}(r)&=\int_{0}^{r}\rho_{\infty}u_{0,L}(s)s\,{\rm d}s-\int_{J}^{r}\rho_{1,L}u_{1,L}(s)s\,{\rm d}s\\
&=\int_{0}^{r}\rho_{\infty}u_{0,L}s\,{\rm d}s-\int_{\chi^{-1}(J)}^{\chi^{-1}(r)}\rho_{1,L}u_{1,L}(\chi(s))\chi'(s)\,{\rm d}s\\
&=\int_{0}^{r}\rho_{\infty}u_{0,L}(s)s\,{\rm d}s-\int_{0}^{\chi^{-1}(r)}\rho_{0,L}u_{0,L}(s)s\,{\rm d}s\\
&=\int_{\chi^{-1}(r)}^{r}\rho_{\infty}u_{0,L}(s)s\,{\rm d}s,
\end{align*}
where \eqref{3.7} has been used in the third equality. Since $0\leq r^2-(\chi^{-1}(r))^2\leq J^2$, one gets
$$
0\leq \bar{\psi}(r)-\hat{\psi}(r)\leq C\rho_{\infty},
$$
where the constant $C$ depends only on $J$ and $\max u_{\infty}$. This completes the proof of Proposition \ref{prop1}. $\hfill\square$

\begin{proposition}\label{prop2}
Let $\hat{\Omega}=\{(x,r)\,|\,r\in (J,L),x\in \R\}$. For any $\varepsilon>0$, there exists an $\bar{L}>0$ such that if $L>\bar{L}$, then for any $\rho_{\infty}\in ((1+\varepsilon)\rho_{0}^{*},\infty)$, if $\psi_{L}$ is a subsonic solution of the problem \eqref{2.9}, then it holds that
\begin{align}\label{4.1}
	\psi_{L}\geq \hat{\psi}_{L}\text{ in }\hat{\Omega}_{L}\text{ and }0\leq \psi_{L}\leq \bar{\psi}_{L}\text{ in }\Omega_{L},
\end{align}
where $\hat{\psi}_{L}$ and $\bar{\psi}_{L}$ are defined in \eqref{3.4}.
\end{proposition}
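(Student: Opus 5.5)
We will prove both inequalities in \eqref{4.1} by comparison arguments for the divergence-form equation \eqref{2.7}. The key observation is that $\bar{\psi}_L$ and $\hat{\psi}_L$, viewed as functions of $r$ only, are exact solutions of \eqref{2.7} in the regions where they are used. The function $\bar{\psi}_L$ is the upstream stream function. Its density is $\rho_\infty$, since $\mathcal{M}=\rho_\infty^2u_{\infty,L}^2$ and $B_L(\bar{\psi}_L)=h(\rho_\infty)+\frac12 u_{\infty,L}^2(r)$. Moreover $\kappa_L(\bar\psi_L(r))=r$. A direct computation gives
\[
\partial_r\big(\bar\psi_r/(r\rho_\infty)\big)=u_{\infty,L}'(r)=r\rho_\infty\theta_L\theta_L',
\]
so $\bar\psi_L$ solves \eqref{2.7} in all of $\Omega_L$.

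For $\hat\psi_L$ we use Proposition \ref{prop1}. Relation \eqref{3.3} shows that the streamline labelled $s$ upstream sits at height $\chi(s)$. Relation \eqref{3.2} shows that the Bernoulli constant is preserved with density $\rho_{1,L}$ and velocity $u_{1,L}(\chi(s))$. Differentiating \eqref{3.2} along $s$ and using \eqref{3.7} gives
\[
\frac{d}{dr}u_{1,L}(r)=r\rho_{1,L}\,\theta_L\theta_L'(\hat\psi_L(r)).
\]
This is exactly \eqref{2.7} for $\hat\psi_L$ on $\hat\Omega_L$, with density $H_L=\rho_{1,L}$ lying on the subsonic branch by \eqref{3.1}.

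\textbf{Lower bound $0\le\psi_L$.} This is already contained in \eqref{2.14-1}, coming from the maximum principle argument of Lemma \ref{lem2.1}, Step 2.

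\textbf{Upper bound $\psi_L\le\bar\psi_L$ in $\Omega_L$.} Set $\phi=\psi_L-\bar\psi_L$. Subtracting the two equations and linearizing as in \eqref{Q4-1} yields
\[
\partial_i(a_{ij}\partial_j\phi+b_i\phi)=c\,\phi,
\]
with $a_{ij}$ uniformly elliptic on compact subsets of $r>0$ because both functions are subsonic. On the boundary, $\phi\le 0$: on $\Gamma$ we have $\psi_L=0\le\bar\psi_L$, and on $\Gamma_L$ we have $\psi_L=m_L=\bar\psi_L(L)$.

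The zeroth-order terms have no favorable sign, because $\theta_L\theta_L'$ is nonincreasing in neither direction. We would therefore not apply the weak maximum principle directly. Instead we would use the monotonicity structure: the map $\psi\mapsto H_L(\mathcal{M},\psi)$ together with the right-hand side $rH_L\theta_L\theta_L'$. We would try the sliding/translation method in $x$, as in \cite{CDXX-2016}. The family $\bar\psi_L(r)$ is $x$-independent, and $\psi_L\to$ (the upstream profile) as $x\to-\infty$ by the construction in Lemma \ref{lem2.1}. Instead of sliding, the cleanest route is to compare along level sets using the positivity $\partial_r\psi_L>0$ from Lemma \ref{lem2.2}. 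With this, both functions can be written in von Mises coordinates $(x,\psi)$, with unknown $r=r(x,\psi)$. The resulting equation for $r$ is elliptic with no zeroth-order term in $r$, because the Bernoulli function and $\theta_L$ depend only on $\psi$. The comparison $r_{\psi_L}(x,\psi)\ge r_{\bar\psi}(\psi)=\kappa_L(\psi)$ then follows from the maximum principle, since $f\ge0$ at the bottom boundary $\psi=0$ and the top values agree at $\psi=m_L$. This is equivalent to $\psi_L\le\bar\psi_L$.

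\textbf{Lower bound $\psi_L\ge\hat\psi_L$ in $\hat\Omega_L$.} We repeat the same argument in von Mises variables with $\hat r(\psi)=\chi(\kappa_L(\psi))$. On $r=J$ (where $f\le J$), $\psi_L\ge0=\hat\psi_L$, so $r_{\psi_L}\le\hat r$. At $\psi=m_L$ both heights equal $L$. The maximum principle for the $r$-equation gives $r_{\psi_L}\le\hat r$, that is, $\psi_L\ge\hat\psi_L$.

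\textbf{Main obstacle.} The main obstacle is carrying out the maximum principle for the transformed equation on an unbounded strip in $x$. We would handle this as in Lemma \ref{lem2.1}: first work on the truncated domains $\Omega_{L,N}$, where the prescribed lateral data lie between $\hat\psi$ and $\bar\psi$ up to $k$-errors, then pass to the limits $N\to\infty$ and $k\to0$. A further point to check is that the von Mises equation keeps uniform ellipticity near $r=0$ in the upper-bound argument. If it does not, we would instead run the comparison for the $k$-regularized problem, where $\partial_r\psi^k_L>0$ up to the axis by Lemma \ref{lem2.3}.
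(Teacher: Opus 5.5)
\textbf{There is a genuine gap in the comparison step.} Your identification of $\bar\psi_L$ and $\hat\psi_L$ as exact $x$-independent solutions of \eqref{2.7} is correct, with densities $\rho_\infty$ and $\rho_{1,L}$ respectively. Your boundary comparisons are also correct.

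The gap is the central claim that in von Mises variables the equation for $r=r(x,\psi)$ has no zeroth-order term. That holds for planar flow, where the problem is invariant under $y\mapsto y+c$. In the axisymmetric case, the weight $1/r$ in $\mathcal M=|\nabla\psi|^2/r^2$ and in $\operatorname{div}(\nabla\psi/(rH_L))$ brings in the undifferentiated unknown. With $u=1/(r\rho r_\psi)$, $v=ur_x$ and $\omega=-r\rho B_L'(\psi)$, the system becomes
\[
\partial_x\Big(\frac{r_x}{r\rho\, r_\psi}\Big)+r\,\partial_\psi P(\rho)=0,\qquad \frac{1+r_x^2}{2r^2\rho^2r_\psi^2}+h(\rho)=B_L(\psi).
\]
So $r$ appears explicitly, and it still does if you pass to $r^2/2$. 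The difference of two solutions therefore satisfies a linear equation whose zeroth-order coefficient has no a priori sign, and the maximum principle you invoke is not available. To repair this route you would have to compute that coefficient along the segment joining $r_{\psi_L}$ and $\kappa_L$ and show it is nonpositive, and some hypothesis on $u_\infty$ must enter there. Apart from the positivity $\partial_r\psi_L>0$ borrowed from Lemma \ref{lem2.2}, the structural condition \eqref{1.3-4} plays no role in your comparison, yet it is exactly what the paper uses at this step. In addition, $r\sim\sqrt{\psi}$ near the axis, so the transformed equation degenerates at $\psi=0$.

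\textbf{The mechanism that works.} You set it aside by asserting that the zeroth-order terms have no favorable sign; in fact they do, once organized correctly. Under \eqref{1.3-4}, which $u_{\infty,L}$ inherits,
\[
(\theta_L\theta_L')'=\theta_L\theta_L''+(\theta_L')^2=\frac{u_{\infty,L}''(\kappa_L)\kappa_L-u_{\infty,L}'(\kappa_L)}{\rho_\infty^2\kappa_L^3u_{\infty,L}(\kappa_L)}\ge0,
\]
so $B_L'$ is nondecreasing in $\psi$.

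The paper stays in the original variables. The function $\hat\Psi_L=\hat\psi_L-\psi_L$ satisfies $\partial_i(a_{ij}\partial_j\hat\Psi_L+b_i\hat\Psi_L)=c_i\partial_i\hat\Psi_L+(d_1+d_2)\hat\Psi_L$, and \eqref{4.4-1} gives the symmetry $b_i=c_i$. Test with $\hat\Psi_L^+$. The mixed terms $2b_i\hat\Psi_L^+\partial_i\hat\Psi_L^+$, the term $d_2|\hat\Psi_L^+|^2$ and the anisotropic part of $a_{ij}$ combine into the perfect square
\[
\frac{\big[r^2H_L^2\theta_L\theta_L'\hat\Psi_L^+-\nabla\hat\psi_{L,t}\cdot\nabla\hat\Psi_L^+\big]^2}{r^3H_L\,(H_L^{\gamma+1}-\mathcal M_{L,t})}.
\]
What remains is $d_1|\hat\Psi_L^+|^2$ with $d_1=r\int_0^1(\theta_L\theta_L''+(\theta_L')^2)H_L\,{\rm d}t\ge0$, which has the good sign.

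Hence $\nabla\hat\Psi_L^+\equiv0$, and the boundary values give $\hat\Psi_L^+\equiv0$, i.e.\ $\psi_L\ge\hat\psi_L$. The same computation with $\bar\psi_L$ in place of $\hat\psi_L$ gives $\psi_L\le\bar\psi_L$.
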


\noindent\textbf{Proof}. Define
\begin{align}\label{4.2}
\Psi_{L}(x,r)=\psi_{L}(x,r)-\bar{\psi}_{L}(r)\,\,\text{for }x\in \Omega\text{ and }\hat{\Psi}_{L}(x,r)=\hat{\psi}_{L}(r)-\psi_{L}(x,r)\quad \text{for }x\in \hat{\Omega}_{L}.
\end{align}
Noting the definition of $\hat{\psi}$ in \eqref{3.4}, one has $\kappa_{L}(\hat{\psi};\rho_{1,L})=r$,
$H_{L}=H\Big(\frac{|\nabla \hat{\psi}|}{r},\hat{\psi};\rho_{1,L}\Big)=\rho_{1,L}$, and
\begin{align}\label{4.3}
\Big(\Big(1-\frac{|\nabla \hat{\psi}_{L}|^2}{r^2H_{L}^{\gamma+1}}\Big)\delta_{ij}+\frac{1}{H_{L}^{\gamma+1}}\frac{\hat{\psi}_{i}}{r}\frac{\hat{\psi}_{j}}{r}\Big)\partial_{ij}\hat{\psi}_{L}-\frac{\partial_{r}\hat{\psi}_{L}}{r}=r\rho_{1,L}u_{1,L}'(r),
\end{align}
that is, $\hat{\psi}_{L}$ satisfies \eqref{2.9} with boundary conditions $\hat{\psi}_{L}(r)=0$ at $r=J$ and $\hat{\psi}_{L}(r)=m_{L}$ at $r=L$. Then $\hat{\Psi}_{L}$ will satisfy
\begin{align}\label{4.4}
\left\{
\begin{aligned}
	&\partial_{i}\Big(a_{ij}(\nabla\psi_{L},\psi;\nabla\hat{\psi}_{L},\hat{\psi}_{L})\partial_{j}\hat{\Psi}_{L}+b_{i}(\nabla\psi_{L},\psi_{L};\nabla\hat{\psi}_{L},\hat{\psi}_{L})\hat{\Psi}_{L}\Big)\\
	&\quad =c_{i}(\nabla\psi_{L},\psi_{L};\nabla\hat{\psi}_{L},\hat{\psi}_{L})\partial_{i}\hat{\Psi}_{L}+d(\nabla\psi_{L},\psi_{L};\nabla\hat{\psi}_{L},\hat{\psi}_{L})\hat{\Psi}_{L},\\
	&\hat{\Psi}_{L}\leq 0\quad \text{at }r=J,\quad \text{and }\hat{\Psi}_{L}=0\quad \text{at }r=L,
\end{aligned}
\right.
\end{align}
where
\begin{align}\label{4.4-2}
a_{ij}(\nabla\psi_{L},\psi_{L};\nabla\hat{\psi}_{L},\hat{\psi}_{L})&=\int_{0}^{1}\frac{1}{rH_{L}^2(\mathcal{M}_{L,t},\hat{\psi}_{L,t})}\Big(H_{L}(\mathcal{M}_{L,t},\hat{\psi}_{L,t})\delta_{ij}-\frac{\partial H_{L}}{\partial \mathcal{M}}\frac{2\partial_{i}\hat{\psi}_{L,t}\partial_{j}\hat{\psi}_{L,t}}{r^2}\Big)\,{\rm d}t,\nonumber\\
b_{i}(\nabla{\psi}_{L},\psi_{L};\nabla \hat{\psi}_{L},\hat{\psi}_{L})&=-\int_{0}^{1}\frac{\partial H_{L}}{\partial \mathcal{\psi}}\frac{\partial_{i}\hat{\psi}_{L,t}}{rH_{L}^2(\mathcal{M}_{L,t},\hat{\psi}_{L,t})}\,{\rm d}t,\nonumber\\
c_{i}(\nabla\psi_{L},\psi_{L};\nabla\hat{\psi}_{L},\hat{\psi}_{L})&=2\int_{0}^{1}\theta_{L}(\hat{\psi}_{L,t})\theta_{L}'(\hat{\psi}_{L,t})\frac{\partial H_{L}}{\partial \mathcal{M}}\frac{\partial_{i}\hat{\psi}_{L,t}}{r}\,{\rm d}t,\nonumber\\
d(\nabla\psi_{L},\psi_{L};\nabla\hat{\psi}_{L},\hat{\psi}_{L})&=r\int_{0}^{1}\big(\theta_{L}(\hat{\psi}_{L,t})\theta_{L}''(\hat{\psi}_{L,t})+(\theta_{L}'(\hat{\psi}_{L,t}))^2\big)H_{L}(\mathcal{M}_{L,t},\hat{\psi}_{L,t})\,{\rm d}t\nonumber\\
&\quad +r\int_{0}^{1}\theta_{L}(\hat{\psi}_{L,t})\theta_{L}'(\hat{\psi}_{L,t})\frac{\partial H_{L}}{\partial \psi}\,{\rm d}t\nonumber\\
&=:d_{1}(\nabla\psi_{L},\psi_{L};\nabla\hat{\psi}_{L},\hat{\psi}_{L})+d_{2}(\nabla\psi_{L},\psi_{L};\nabla\hat{\psi}_{L},\hat{\psi}_{L}),
\end{align}
for $i,j=1,2$, with $\hat{\psi}_{L,t}=t\hat{\psi}_{L}+(1-t)\psi_{L}$ and $\mathcal{M}_{L,t}=\frac{|\nabla \hat{\psi}_{L,t}|^2}{r^2}$ for $t\in [0,1]$. Here and in what follows, we neglect the parameter $\rho_{\infty}$ in the coefficients, unless confusion arises.

Similar to \eqref{1.12-1}, a straightforward calculation shows that
\begin{align}\label{4.4-1}
\frac{\partial H_{L}}{\partial\mathcal{M}}=-\frac{H_{L}(\mathcal{M}_{L,t},\hat{\psi}_{L,t})}{2(H_{L}^{\gamma+1}(\mathcal{M}_{L,t},\hat{\psi}_{L,t})-\mathcal{M}_{L,t})},\quad \frac{\partial H_{L}}{\partial\psi}=\frac{\theta_{L}(\hat{\psi}_{L,t})\theta_{L}'(\hat{\psi}_{L,t})H_{L}^3(\mathcal{M}_{L,t},\hat{\psi}_{L,t})}{(H_{L}^{\gamma+1}(\mathcal{M}_{L,t},\hat{\psi}_{L,t})-\mathcal{M}_{L,t})},
\end{align}
which implies $b_{i}=c_{i}$.

Set $\hat{\Psi}_{L}^{+}=\max\{\hat{\Psi}_{L},0\}$. Multiplying \eqref{4.4} by $\hat{\Psi}_{L}^{+}$ and integrating by parts imply that
\begin{align}\label{4.5}
&\iint_{\hat{\Omega}_{L}}\Bigg[|\nabla\hat{\Psi}_{L}^{+}|^2\int_{0}^{1}\frac{1}{rH_{L}(\mathcal{M}_{L,t},\hat{\psi}_{L,t})}\,{\rm d}t\nonumber\\
&\quad +\int_{0}^{1}\frac{1}{r^3H_{L}(\mathcal{M}_{L,t},\hat{\psi}_{L,t})(H_{L}^{\gamma+1}(\mathcal{M}_{L,t},\hat{\psi}_{L,t}))}\Big[r^2H_{L}^2\theta_{L}\theta_{L}'\hat{\Psi}_{L}^{+}-\nabla\hat{\psi}_{L,t}\cdot \nabla\hat{\Psi}_{L}^{+}\Big]^2\,{\rm d}t\Bigg]\,{\rm d}x{\rm d}r\nonumber\\
&=-\iint_{\hat{\Omega}_{L}}r\Big(\int_{0}^{1}\big(\theta_{L}(\hat{\psi}_{L,t})\theta_{L}''(\hat{\psi}_{L,t})+(\theta_{L}'(\hat{\psi}_{L,t}))^2\big)H_{L}(\mathcal{M}_{L,t},\hat{\psi}_{L,t})\,{\rm d}t\Big)|\hat{\Psi}_{L}^{+}|^2\,{\rm d}x{\rm d}r.
\end{align}
A direct calculation shows that 
\begin{align*}
&\theta_{L}(\hat{\psi}_{L,t})\theta_{L}''(\hat{\psi}_{L,t})+(\theta_{L}'(\hat{\psi}_{L,t}))^2\\
&=\frac{1}{\kappa^3(\hat{\psi}_{L,t})\rho_{\infty}^2u_{\infty}(\kappa(\hat{\psi}_{L,t}))}\big[u_{\infty,L}''(\kappa(\hat{\psi}_{L,t}))\kappa(\hat{\psi}_{L,t})-u_{\infty,L}'(\kappa(\hat{\psi}_{L,t}))\big]\geq 0,
\end{align*}
where we have used 
the structural condition \eqref{1.3-4}. Hence, it follows from \eqref{4.5} that
$$
\nabla\hat{\Psi}_{L}^{+}\equiv 0\quad \text{in }\hat{\Omega}_{L}.
$$
Since $\hat{\Psi}_{L}^{+}=0$ on $\partial\hat{\Omega}_{L}$. Thus, $\hat{\Psi}_{L}^{+}\leq 0$ in $\hat{\Omega}_{L}$, which is equivalent to $\psi_{L}\geq \hat{\psi}_{L}$ in $\hat{\Omega}$. Similarly, one can show that $\psi_{L}\leq \bar{\psi}_{L}$ in $\Omega_{L}$. This completes the proof of Proposition \ref{prop2}. $\hfill\square$.

\begin{lemma}\label{lem1}
For any $\varepsilon>0$, there exists an $\bar{L}>0$ such that if $L>\bar{L}$, then for any $\rho_{\infty}\in ((1+\varepsilon)\rho_{\infty}^{*},\infty)$, if $\psi_{L}$ is a subsonic solution of the problem \eqref{2.9} satisfying $0\leq \psi_{L}\leq m_{L}$, then it holds that
\begin{align}\label{4.6}
	-C\rho_{\infty}\leq \Psi_{L}\leq 0,
\end{align}
where the constant $C$ depends only on $J$ and $\max u_{\infty}$.
\end{lemma}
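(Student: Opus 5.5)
The plan is to combine the comparison functions of Proposition \ref{prop1} with the ordering in Proposition \ref{prop2}, splitting the domain according to whether $r\geq J$ or $r<J$. Recall $\Psi_{L}=\psi_{L}-\bar{\psi}_{L}$. The upper bound $\Psi_{L}\leq 0$ in $\Omega_{L}$ is exactly the second half of \eqref{4.1}, so only the lower bound $\Psi_{L}\geq -C\rho_{\infty}$ needs work. We fix $\varepsilon>0$ and take $\bar{L}$ as the larger of the two thresholds from Propositions \ref{prop1} and \ref{prop2}. Then for $L>\bar{L}$ and $\rho_{\infty}>(1+\varepsilon)\rho_{\infty}^{*}$ both the triple $(\chi,\rho_{1,L},u_{1,L})$ and the comparison \eqref{4.1} are available.

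\textbf{The region $r\geq J$.} On $\hat{\Omega}_{L}=\{J<r<L\}$, Proposition \ref{prop2} gives $\psi_{L}\geq\hat{\psi}_{L}$. Hence
$$
\Psi_{L}=\psi_{L}-\bar{\psi}_{L}\geq \hat{\psi}_{L}(r)-\bar{\psi}_{L}(r)\geq -C\rho_{\infty},
$$
where the last inequality is \eqref{3.5}. The constant $C$ there depends only on $J$ and $\max u_{\infty}$, and since $u_{\infty,L}\leq \max u_{\infty}$ it is independent of $L$. By continuity the same bound holds on the closure, including the line $r=J$.

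\textbf{The region $r<J$.} This is the part of $\Omega_{L}$ lying below the top of the obstacle, where the comparison with $\hat\psi_L$ is not available. Here we use only the nonnegativity $\psi_{L}\geq 0$ from the hypothesis. Since $r<J$ and $u_{\infty,L}\leq \max u_{\infty}$,
$$
\Psi_{L}\geq -\bar{\psi}_{L}(r)=-\rho_{\infty}\int_{0}^{r}u_{\infty,L}(s)s\,{\rm d}s\geq -\tfrac12\rho_{\infty}J^{2}\max u_{\infty}.
$$
This constant again depends only on $J$ and $\max u_{\infty}$. Combining the two regions yields \eqref{4.6}, with $C$ equal to the larger of the two constants.

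\textbf{Main obstacle.} The only delicate point is making sure the hypotheses of Proposition \ref{prop2} are met for the given solution $\psi_{L}$. In particular, one must check that the energy argument there, which uses $\hat{\Psi}_{L}\leq 0$ at $r=J$, holds for our $\psi_{L}$. This follows from $\psi_{L}\geq 0=\hat{\psi}_{L}(J)$, which is exactly where the assumption $0\leq\psi_{L}$ is used. One must also check that the constant from \eqref{3.5} is uniform in $L$ and in $\rho_{\infty}$ after dividing by $\rho_{\infty}$. Once these are in place, the argument is just the pointwise combination above.
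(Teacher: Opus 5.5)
Your proposal is correct and matches the paper's proof. The upper bound $\Psi_L\le 0$ comes from the comparison $\psi_L\le\bar\psi_L$ in \eqref{4.1}. The lower bound splits into two regions: for $r\ge J$, the inequality $\psi_L\ge\hat\psi_L$ from Proposition \ref{prop2} combined with \eqref{3.5} gives $-C\rho_\infty$, and for $r<J$, $\psi_L\ge 0$ gives $\Psi_L\ge-\bar\psi_L(r)\ge-\frac{1}{2}\rho_\infty J^2\max u_\infty$. Your extra check, that $\psi_L\ge 0$ is what ensures $\hat\Psi_L\le 0$ on $r=J$ in Proposition \ref{prop2}, is correct, and the paper leaves it implicit.
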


\noindent\textbf{Proof}. It follows directly from \eqref{4.5} that $\Psi_{L}=\psi_{L}-\bar{\psi}_{L}\leq 0$. For $r\geq J$, we obtain from Propositions \ref{prop1} and \ref{prop2} that
\begin{align*}
\Psi_{L}=\psi_{L}-\bar{\psi}_{L}\geq \hat{\psi}_{L}-\bar{\psi}_{L}\geq -C\rho_{\infty}.
\end{align*}
And for $r\in (0,J)$, then it follows from $\psi_{L}\geq 0$ that
\begin{align*}
\Psi_{L}\geq \psi_{L}-\bar{\psi}_{L}\geq -\bar{\psi}_{L}\geq -C\rho_{\infty}.
\end{align*}
This concludes the proof of Lemma \ref{lem1}. $\hfill\square$

\begin{lemma}\label{lem2}
For any $\varepsilon>0$, there exists an $\bar{L}>0$ such that if $L>\bar{L}$, then for any $\rho_{\infty}\in ((1+\varepsilon)\rho_{\infty}^{*},\infty)$, if $\psi_{L}$ is a subsonic solution of the problem \eqref{2.9} satisfying $0\leq \psi_{L}\leq m_{L}$, then
\begin{align}\label{4.7-5}
	\|\Psi_{L}\|_{C^{1}(\bar{\Omega}_{L})}\leq \mathscr{C}\rho_{\infty}r,\quad \|\Psi_{L}\|_{C^{1,\beta}(\overline{\Omega}_{L})}\leq \mathscr{C}\rho_{\infty}\max\{r,r^{1-\beta}\},
\end{align}
where $\mathscr{C}$ depends only on $J$, $\max u_{\infty}$ and $\varepsilon_{0}$ but is independent of $L$.
\end{lemma}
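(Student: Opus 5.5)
We split $\overline{\Omega}_{L}$ into a region away from the axis, $\{r\geq \delta_{1}\}$ with $\delta_{1}>0$ small (depending only on $J$), and a region near the axis, $\{r<\delta_{1}\}$. Away from the axis we work with the difference $\Psi_{L}=\psi_{L}-\bar{\psi}_{L}$. Near the axis we work with $\psi_{L}$ itself, as announced in the introduction, because the source terms in the equation for $\Psi_{L}$ do not have the right decay there.

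\textbf{Away from the axis.} Subtracting the equation satisfied by $\bar{\psi}_{L}$ from \eqref{2.9}, exactly as in \eqref{4.4}--\eqref{4.4-2}, shows that $\Psi_{L}$ solves a linear divergence-form equation
$$
\partial_{i}\big(a_{ij}\partial_{j}\Psi_{L}+b_{i}\Psi_{L}\big)=c_{i}\partial_{i}\Psi_{L}+d\,\Psi_{L}
$$
with zero boundary data on $\Gamma_{L}$, and data $-\bar{\psi}_{L}=O(\rho_{\infty})$ on $\Gamma$. The subsonic bound \eqref{2.15-1} and $H\sim O(\rho_{\infty})$ (cf. \eqref{2.22-2}) give $\lambda\geq C_{\varepsilon_{0}}>0$.

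Multiplying this equation by $r$, we check the scaling of each coefficient:
\begin{itemize}
\item $r\,a_{ij}\sim \rho_{\infty}^{-1}$ is uniformly elliptic and bounded;
\item $r\,b_{i},\ r\,c_{i}\sim \rho_{\infty}^{-1}\cdot O(\theta_{L}\theta_{L}'\,|\nabla\psi|)$, where $\theta_{L}'=O\big(u_{\infty}'/(\rho_{\infty}\kappa u_{\infty})\big)$;
\item $r\,d=O(r^{2}\theta_{L}'^{2}\rho_{\infty})$.
\end{itemize}
Using $\kappa\sim r$ along streamlines together with $u_{\infty}'(r)\leq 0$ and $u_{\infty}'(r)/r$ bounded (Remark \ref{rem1}), all of these are bounded by $C\rho_{\infty}^{-1}$ independently of $L$. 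After normalising by $\rho_{\infty}$, the lower-order coefficients are therefore uniformly bounded.

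We then apply local Schauder/De Giorgi--Nash $C^{1,\beta}$ estimates (\cite[Theorems 8.32--8.33]{GT}) on balls $B_{\delta_{1}/2}(x_{0},r_{0})$ with $r_{0}\geq\delta_{1}$, and boundary versions near $\Gamma$ (which is $C^{2,\nu}$) and near $\Gamma_{L}$. Combined with $|\Psi_{L}|\leq C\rho_{\infty}$ from Lemma \ref{lem1}, this gives
$$
|\nabla\Psi_{L}|+[\nabla\Psi_{L}]_{\beta}\leq \mathscr{C}\rho_{\infty}
$$
uniformly in $(x_{0},r_{0})$ and $L$. Since $r\geq\delta_{1}$ in this region, the bound $\mathscr{C}\rho_{\infty}\leq \mathscr{C}\rho_{\infty}r$ follows, which is the claimed form there.

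\textbf{Near the axis.} For $r<\delta_{1}<J$ we have $x\notin[0,1]$ only near the obstacle's ends, and the axis is part of $\Gamma$ outside $[0,1]$. There we use the rescaling of Step 5 in Lemma \ref{lem2.1}, now with $k=0$:
$$
\psi_{0}(x,r)=\xi^{-2}\psi_{L}(x_{0}+\xi x,\ r_{0}+\xi r),\qquad \xi=r_{0}/2 .
$$
The rescaled function solves a uniformly elliptic divergence-form equation like \eqref{2.29} on $B_{1}$, with bounded coefficients whose right-hand side is $O(\xi\rho_{\infty})$. A barrier of the type $\tilde{\psi}=\rho_{\infty}(\mathcal{A}-\mathcal{B}r)r^{2}$ with constants independent of $L$ gives $\psi_{L}\leq C\rho_{\infty}r^{2}$, hence $|\psi_{0}|\leq C\rho_{\infty}$ on $B_{1}$. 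Moser iteration and $C^{1,\beta}$ estimates (\cite[Theorem 8.32]{GT}) then give $|\nabla\psi_{0}|+[\nabla\psi_{0}]_{\beta}\leq C\rho_{\infty}$. Scaling back,
$$
|\nabla\psi_{L}|\leq C\rho_{\infty}r,\qquad [\nabla\psi_{L}]_{\beta,B_{r_{0}/2}}\leq C\rho_{\infty}r^{1-\beta}.
$$
The same bounds hold trivially for $\bar{\psi}_{L}$, since $\bar{\psi}_{L}'=\rho_{\infty}u_{\infty,L}r$. Hence
$$
|\nabla\Psi_{L}|\leq \mathscr{C}\rho_{\infty}r,\qquad [\nabla\Psi_{L}]_{\beta}\lesssim \rho_{\infty}r^{1-\beta}.
$$
Similarly $|\Psi_{L}|\leq \psi_{L}+\bar{\psi}_{L}\leq C\rho_{\infty}r^{2}\leq C\rho_{\infty}r$. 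Global Hölder seminorms are assembled from the local ones in the usual way: for two points at distance larger than the local ball radius, use the sup bound. Gluing the two regions gives \eqref{4.7-5}.

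The main obstacle is the near-axis region. We must check that the barrier constants $\mathcal{A},\mathcal{B}$ and the ellipticity constants in the rescaled equation are independent of $L$. This relies on the source $r^{2}\theta_{L}\theta_{L}'H_{L}^{2}$ being $O(\rho_{\infty}r)$ uniformly in $L$, i.e. on the uniform bound for $u_{\infty,L}'(\kappa)/\kappa$ and on $\kappa_{L}(\psi_{L})\sim r$. That equivalence follows from $\hat{\psi}_{L}\leq\psi_{L}\leq\bar{\psi}_{L}$ (Proposition \ref{prop2}) and must be verified carefully near $r=0$.
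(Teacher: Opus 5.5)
\textbf{Gap (away from the axis).} The gradient bound for $r\ge\delta_1$ is the step the whole lemma rests on, and there you apply \cite[Theorems 8.32--8.33]{GT} to the divergence-form linearized equation for $\Psi_L$. Those Schauder-type $C^{1,\beta}$ estimates require uniform $C^{\alpha}$ bounds on the leading coefficients $a_{ij}$ (and on $b_i$).

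Here $a_{ij}$ are built from $\nabla\psi_{L,t}/r$, so their H\"older norms are controlled only by $[\nabla\psi_L]_{\alpha}$. That is exactly the quantity you are trying to bound independently of $L$ and linearly in $\rho_\infty$. The qualitative regularity $\psi_L\in C^{1,\beta}(\overline{\Omega}_L)$ from Lemma \ref{lem2.2} comes with $L$-dependent constants. De Giorgi--Nash by itself only gives H\"older continuity of $\Psi_L$, not of $\nabla\Psi_L$.

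So as written the argument is circular. The only uniform information you actually have on the coefficients is ellipticity, with $\Lambda/\lambda$ controlled by $\varepsilon_0$. The same objection applies near the axis, where you again invoke \cite[Theorem 8.32]{GT} to bound $[\nabla\psi_0]_\beta$ for a quasilinear equation.

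\textbf{Missing idea.} In two dimensions there are H\"older gradient estimates for non-divergence equations $A_{ij}\partial_{ij}w=f$ with merely bounded measurable coefficients, with constants depending only on $\Lambda/\lambda$ \cite[Theorem 12.4]{GT}. This is why the paper works with the non-divergence form \eqref{4.7}:
\begin{itemize}
\item There $A_{ij}$ has eigenvalues $1-\mathcal{M}/H_L^{\gamma+1}$ and $1$, so by \eqref{2.15-1} the ratio $\Lambda/\lambda$ depends only on $\varepsilon_0$.
\item The source $r^2\theta_L\theta_L'H_L^2$ is not linearized. It is bounded directly by $C\rho_\infty r$, using $r^2-\kappa_L^2(\psi_L)\le C$ (a consequence of $0\le\bar\psi_L-\psi_L\le C\rho_\infty$) and the boundedness of $u_\infty'(r)/r$.
\end{itemize}
This route also avoids a second problem in your bookkeeping. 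The zeroth-order coefficient $d$ of your linearization contains $\theta_L\theta_L''$, which your estimate $rd=O(r^2\theta_L'^2\rho_\infty)$ drops. That term involves $(u_{\infty,L}''\kappa-u_{\infty,L}')/\kappa^3$, which is only integrable, not bounded, as $\psi_{L,t}\to 0$, for instance along the obstacle surface.

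\textbf{Near the axis.} Your rescaling with $\xi=r_0/2$ is the same as the paper's. However, get $\psi_L\le\bar\psi_L\le C\rho_\infty r^2$ directly from Proposition \ref{prop2} rather than from the barrier $\rho_\infty(\mathcal{A}-\mathcal{B}r)r^2$. That barrier cannot dominate $\psi_L=m_L$ on $\Gamma_L$ with constants independent of $L$; in Lemma \ref{lem2.1} those constants do depend on $L$.
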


\noindent\textbf{Proof}. Recall the definition of \eqref{3.4}. A direct calculation shows that $\Psi_{L}$ satisfies
\begin{align}\label{4.7}
\left\{
\begin{aligned}
	&\Big(\Big(1-\frac{|\nabla\psi_{L}|^2}{r^2H_{L}^{\gamma+1}}\Big)\delta_{ij}+\frac{\partial_{i}\psi_{L}\partial_{j}\psi_{L}}{r^2H_{L}^{\gamma+1}}\Big)\partial_{ij}\Psi_{L}\\
	&\qquad=r^2\theta_{L}\theta_{L}'H_{L}^2-\rho_{\infty}ru_{\infty}'(r)+\frac{|\partial_{1}\psi_{L}|^2}{r^2H_{L}^{\gamma+1}}\rho_{\infty}(u_{\infty}(r)+ru_{\infty}'(r))+\frac{\partial_{2}\Psi_{L}}{r},\\
	&\Psi_{L}=-\int_{0}^{f(x)}\rho_{\infty}u_{\infty,L}(s)s\,{\rm d}s\quad \text{on } \Gamma,\quad \Psi_{L}=0 \text{ on }\Gamma_{L}.
\end{aligned}
\right.
\end{align}
Noting that $0\leq \bar{\psi}_{L}-\psi_{L}\leq C\rho_{\infty}$ and $\kappa_{L}(\bar{\psi}_{L};\rho_{\infty})=r$, we have
$$
r^2-\kappa_{L}^2(\psi_{L};\rho_{\infty})=\int_{\psi_{L}}^{\bar{\psi}_{L}}2\kappa_{L}(s;\rho_{\infty})\kappa_{L}'(s;\rho_{\infty})\,{\rm d}s=\frac{2}{\rho_{\infty}}\int_{\psi_{L}}^{\bar{\psi}_{L}}\frac{1}{u_{\infty,L}(s)}\,{\rm d}s\leq C,
$$
which implies that
$$
\frac{r}{\kappa_{L}(\psi_{L};\rho_{\infty})}\leq C\Big(1+\frac{1}{\kappa_{L}(\psi_{L};\rho_{\infty})}\Big).
$$
Therefore
$$
r^2\theta_{L}\theta_{L}'H_{L}^2=r\frac{r}{\kappa_{L}(\psi_{L};\rho_{\infty})}\frac{u_{\infty,L}'(\kappa_{L}(\psi_{L};\rho_{\infty}))H_{L}^2(\mathcal{M},\psi_{L};\rho_{\infty})}{\rho_{\infty}}\leq C\rho_{\infty}r,
$$
where we have used the facts $H_{L}(\mathcal{M},\psi_{L};\rho_{\infty})\leq C\rho_{\infty}$ and $\frac{u_{\infty}'(r)}{r}$ is uniformly bounded.

\textbf{Step 1. Estimates away from the axis $\Gamma$}. For any $r>\delta>0$, noting that the source term in \eqref{4.7} is bounded by $C_{\delta}\rho_{\infty}r$, we obtain from the H\"{o}lder gradient estimate \cite[Theorem 12.4]{GT} for linear uniform elliptic equation and \eqref{4.6} that
\begin{align}\label{4.7-1}
\|\Psi\|_{C^{1,\beta}(\Omega)}\leq \mathscr{C}|\Psi|+C_{\delta}\rho_{\infty}r\leq \mathscr{C}_{\delta}\rho_{\infty}r.
\end{align} 

\textbf{Step 2. Estimates near the axis $\Gamma$}. For $r<\delta$ near the axis, one has
\begin{align}\label{4.7-2}
\psi_{L}(r)\leq \bar{\psi}_{L}(r)\leq C\rho_{\infty}r^2.
\end{align}
For any fixed point $(x_{0},r_{0})\in \R\times (0,\delta)$, set
\begin{align*}
\tilde{\psi}_{L}(x,r)=\frac{1}{\xi^2}\psi_{L}(x_{0}+x\xi,r_{0}+r\xi),\quad \xi=\frac{r_{0}}{2},
\end{align*}
which is well defined in $B_{1}(0,0)$. Moreover, direct calculations yield that
\begin{align*}
\frac{\nabla \psi_{L}(x_{0}+x\xi,r_{0}+r\xi)}{r_{0}+r\xi}=\frac{\nabla \tilde{\psi}_{L}}{2+r},
\end{align*}
and $\tilde{\psi}_{L}$ satisfies
\begin{align}\label{4.7-3}
\operatorname{div}\Big(\frac{\nabla\tilde{\psi}_{L}}{(2+r)H_{L}\big(\frac{|\nabla \tilde{\psi}_{L}|^2}{(2+r)^2},\xi^2\tilde{\psi}_{L};\rho_{\infty}\big)}\Big)=(r_{0}+r\xi)\theta\theta'H_{L}\big(\frac{|\nabla \tilde{\psi}_{L}|^2}{(2+r)^2},\xi^2\tilde{\psi}_{L};\rho_{\infty}\big).
\end{align}
Due to \eqref{4.7-2}, one gets
\begin{align*}
0\leq \tilde{\psi}_{L}(x,r)=\frac{4}{r_{0}^2}\psi_{L}(x_{0}+x\xi,r_{0}+r\xi)\leq C\rho_{\infty}.
\end{align*}
Applying Moser's iteration, we get
$$
|\nabla\tilde{\psi}_{L}|\leq \mathscr{C}\rho_{\infty},
$$
which yields that 
\begin{align}\label{4.7-4}
\frac{|\nabla \psi_{L}(x_{0},r_{0})|}{r_{0}}=\frac{1}{2}|\nabla \tilde{\psi}_{L}(0,0)|\leq \mathscr{C}\rho_{\infty}.
\end{align}
Combining \eqref{4.7-1} and \eqref{4.7-4}, we obtain \eqref{4.7-5}. This concludes the proof of Lemma \ref{lem2}. $\hfill\square$.

\medskip

Moreover, we have the following far-field behavior and uniform integral estimate for $\Psi_{L}$.

\begin{lemma}\label{lem3}
If $\psi_{L}$ is a subsonic solution of the problem \eqref{2.9}, then we have
\begin{align}\label{4.8}
	|\psi_{L}-\bar{\psi}_{L}|\to 0\quad \text{uniformly with respect to $r\in [0,L]$, as $|x|\to \infty$},
\end{align}
and
\begin{align}\label{4.9}
	\Big\|\frac{|\nabla(\psi_{L}-\bar{\psi}_{L})|}{r^{\frac{1}{2}}}\Big\|_{L^2(\Omega_{L})}\leq \mathscr{C}\rho_{\infty},
\end{align}
where the constant $\mathscr{C}$ depends only on $J$, $\max u_{\infty}$ and $\varepsilon_{0}$, but is independent of $L$. 
\end{lemma}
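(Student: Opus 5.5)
We first prove the weighted energy estimate \eqref{4.9}, and then obtain the far-field convergence \eqref{4.8} from it using the uniform $C^{1,\beta}$ bounds of Lemma \ref{lem2}. Write $\Psi_L=\psi_L-\bar\psi_L$ as in \eqref{4.2}.

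Since $\bar\psi_L$ is a function of $r$ alone, it solves the $x$-independent version of \eqref{2.9}. Indeed $\kappa_L(\bar\psi_L)=r$, the density equals $\rho_\infty$, and the right-hand side is $r\rho_\infty u_{\infty,L}'(r)$. We therefore subtract the divergence forms of the two equations, exactly as in the proof of Proposition \ref{prop2}, with $\hat\psi_L$ replaced by $\bar\psi_L$. This gives
\begin{equation*}
\partial_i(a_{ij}\partial_j\Psi_L+b_i\Psi_L)=c_i\partial_i\Psi_L+d\,\Psi_L ,
\end{equation*}
with $b_i=c_i$. The coefficients satisfy $a_{ij}\xi_i\xi_j\geq \frac{c}{r\rho_\infty}|\xi|^2$ by subsonicity, namely \eqref{2.15-1}.

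We multiply by $\Psi_L\eta_R(x)$, where $\eta_R$ is a cutoff in $x$ supported in $|x|\le 2R$ with $|\eta_R'|\le C/R$, and integrate over $\Omega_L$. The cutoff is needed because $\Psi_L$ need not be integrable in $x$ a priori. The boundary terms vanish on $\Gamma_L$, since $\Psi_L=0$ there. On $\Gamma$ the boundary term is supported in $0\le x\le 1$, because $\Psi_L=0$ wherever $f=0$; there it is bounded by $C\rho_\infty$ using $|\Psi_L|\le C\rho_\infty$ from Lemma \ref{lem1} and $|\nabla\Psi_L|/r\le \mathscr C\rho_\infty$ from Lemma \ref{lem2}.

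As in \eqref{4.5}, the $b_i,c_i$ terms combine with $d_2$ into a perfect square. The term $d_1$ contributes with the good sign by the structural condition \eqref{1.3-4}. We are left with
\begin{equation*}
\iint \eta_R\frac{|\nabla\Psi_L|^2}{r\rho_\infty}\le C\rho_\infty+\iint |\eta_R'|\,|a\nabla\Psi_L|\,|\Psi_L|.
\end{equation*}

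The cutoff error is the main obstacle, because it must be bounded uniformly in $L$ while the strip $\Omega_L$ has width $L$. A naive bound using $|\Psi_L|\le C\rho_\infty$ and $|\nabla\Psi_L|\le \mathscr C\rho_\infty r$ gives $C\rho_\infty L^2$, which is not uniform. Two remedies are available. The first is to use Cauchy--Schwarz to absorb half of the error into the left-hand side, leaving
\begin{equation*}
R^{-2}\iint_{R<|x|<2R}\frac{|\Psi_L|^2}{r}\,\rho_\infty .
\end{equation*}
One then needs an $L$-independent weighted $L^2$ bound on $\Psi_L$ on dyadic annuli. The alternative, which I would try first, is to choose the test function differently: take $\min\{\Psi_L^-,\cdot\}$ or exploit the sign $\Psi_L\le0$. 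Using Proposition \ref{prop2}, $\Psi_L$ is squeezed between $\hat\psi_L-\bar\psi_L$ and $0$, so we can test with $\Psi_L-(\hat\psi_L-\bar\psi_L)$ in $r>J$, paralleling \cite{CDXX-2016}. That difference vanishes on both $r=J$-type and $r=L$ boundaries in a controlled way, and it lets us trade the cutoff error for $\rho_\infty$-sized terms. Letting $R\to\infty$ then yields $\iint |\nabla\Psi_L|^2/r\le\mathscr C\rho_\infty^2$, which is \eqref{4.9}.

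For \eqref{4.8}, fix $L$ and argue by contradiction. Suppose there are $x_n\to\pm\infty$ and $r_n$ with $|\Psi_L(x_n,r_n)|\ge\epsilon$. By the $C^{1,\beta}$ bound of Lemma \ref{lem2}, the translates $\Psi_L(\cdot+x_n,\cdot)$ converge on compacts to a limit $\Psi_\pm$. The finiteness of \eqref{4.9} forces $\nabla\Psi_\pm\equiv0$. Since $\Psi_\pm=0$ on $r=L$ and the domain is connected, $\Psi_\pm\equiv0$, which is a contradiction. The convergence is uniform in $r\in[0,L]$ because $[0,L]$ is compact and the convergence holds up to the boundary.
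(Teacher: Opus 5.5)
The gap is in the proof of \eqref{4.9}, at exactly the step you flag as the main obstacle: the cutoff error is never actually controlled. You set aside remedy (1), saying it needs an $L$-independent weighted $L^2$ bound on annuli. The alternative you would try first does not work as described. The function $\psi_L-\hat\psi_L=\Psi_L-(\hat\psi_L-\bar\psi_L)$ is defined only for $r>J$. On $r=J$ it equals $\psi_L(x,J)>0$, not $0$. It also has no reason to decay as $|x|\to\infty$: given \eqref{4.8}, it tends to $\bar\psi_L-\hat\psi_L$, which is generally nonzero and of size $C\rho_\infty$. So testing with it reproduces a cutoff error that does not vanish. It also adds cross terms involving $\nabla(\hat\psi_L-\bar\psi_L)\not\equiv 0$ over $\{|x|\le 2R\}$, and these grow with $R$. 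In addition, your displayed inequality drops the cutoff term $2\iint \eta_R\,\partial_i\eta_R\, b_i\Psi_L^2$, which also does not vanish as $R\to\infty$ under the a priori bounds. As written, ``letting $R\to\infty$'' is unjustified.

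The missing observation is that $L$ is fixed while $R\to\infty$. The cutoff error need not be bounded uniformly in $L$; it only has to vanish in the limit, after which no $L$-dependence survives. Remedy (1) already does this. Test with $\eta_R^2\Psi_L$ and absorb the $a$-part into $\eta_R^2|\nabla\Psi_L|^2/(rH)$. Absorb the $b$-part into the perfect square of \eqref{4.5}, using $|b_i\Psi_L|\lesssim (r\rho_\infty)^{-1/2}\big(\text{square}+|\nabla\Psi_L|^2/(r\rho_\infty)\big)^{1/2}$ as in \eqref{7.9-1}. What remains is $C\iint|\eta_R'|^2\Psi_L^2/(r\rho_\infty)\le C\rho_\infty(1+\log L)/R\to 0$. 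This uses $|\Psi_L|\le C\rho_\infty$ from Lemma \ref{lem1} and $|\Psi_L|\le\bar\psi_L\le C\rho_\infty r^2$ near the axis. Only the $\Gamma$-term on $0\le x\le 1$ is left, and it is $L$-independent.

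With this fix your route is a legitimate alternative to the paper's, and it runs in the opposite order. The paper imports \eqref{4.8} from \cite[Section 4]{DD-2011}. It then uses \eqref{4.8} to kill the boundary terms $\mathscr{C}\int_0^L|\Psi_L(\pm K,r)|\,dr$ of a sharp truncation at $x=\pm K$. You instead obtain \eqref{4.9} first and deduce \eqref{4.8} by translation compactness. That step is correct, since it only uses finiteness of the weighted Dirichlet integral for fixed $L$ plus the $C^{1,\beta}$ bound of Lemma \ref{lem2}. This makes the lemma self-contained, but it also means you cannot borrow the paper's truncation argument for \eqref{4.9} without circularity. The cutoff estimate has to close on its own.
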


\noindent\textbf{Proof}. First, the asymptotic behavior \eqref{4.8} follows from \cite[Section 4]{DD-2011}. For \eqref{4.9}, similar to \eqref{4.4}, $\Psi_{L}=\psi_{L}-\bar{\psi}_{L}$ satisfies
\begin{align}\label{4.10}
\left\{
\begin{aligned}
	&\partial_{i}\Big(a_{ij}(\nabla\psi_{L},\psi;\nabla\Psi_{L},\Psi_{L})\partial_{j}\Psi_{L}+b_{i}(\nabla\psi_{L},\psi_{L};\nabla\Psi_{L},\Psi_{L})\Psi_{L}\Big)\\
	&\quad =c_{i}(\nabla\psi_{L},\psi_{L};\nabla\Psi_{L},\Psi_{L})\partial_{i}\Psi_{L}+d(\nabla\psi_{L},\psi_{L};\nabla\Psi_{L},\Psi_{L})\Psi_{L},\\
	&\Psi_{L}=-\bar{\psi}_{L}(r)\quad \text{on }\Gamma,\quad \text{and }\Psi_{L}=0\quad \text{on }\Gamma_{L},
\end{aligned}
\right.
\end{align}
where the coefficients $a_{ij}$, $b_{i}$, $c_{i}$ and $d_{i}$ are similarly defined in \eqref{4.4-2} with $\hat{\psi}_{L}$ replaced by $\bar{\psi}_{L}$.

For any large constant $K>0$, multiplying \eqref{4.10} by $\Psi_{L}$ and integrating over $\Omega_{L,K}=\Omega_{L}\cap \{|x|\leq K\}$ yield that
\begin{align}\label{4.12}
&	\iint_{\Omega_{L,K}}\big(a_{ij}\partial_{i}\Psi_{L}\partial_{j}\Psi_{L}+2b_{i}\Psi_{L}\partial_{i}\Psi_{L}+d_{1}\Psi_{L}\big)\,{\rm d}x{\rm d}r\nonumber\\
&=\int_{\partial\Omega_{L,K}}(a_{ij}\partial_{j}\Psi_{L}+b_{i}\Psi_{L})\Psi_{L}n_{i}\,{\rm d}S-\iint_{\Omega_{L,K}}d_{2}\Psi_{L}^2\,{\rm d}x{\rm d}r\nonumber\\
&=-\int_{r=f(x)}(a_{ij}\partial_{j}\Psi_{L}+b_{i}\Psi_{L})\Psi_{L}n_{i}\,{\rm d}S-\iint_{\Omega_{L,K}}d_{2}\Psi_{L}^2\,{\rm d}x{\rm d}r\nonumber\\
&\quad -\iint_{x=K}(a_{ij}\partial_{j}\Psi_{L}+b_{i}\Psi_{L})\Psi_{L}n_{i}\,{\rm d}S+\iint_{x=-K}(a_{ij}\partial_{j}\Psi_{L}+b_{i}\Psi_{L})\Psi_{L}n_{i}\,{\rm d}S,
\end{align}
where ${\bf n}=(n_{1},n_{2})$ is the unit normal vector at the boundary $\partial\Omega_{L,K}$. Noting that $d_{2}\geq 0$, we have
\begin{align}\label{4.13}
&	\iint_{\Omega_{L,K}}\big(a_{ij}\partial_{i}\Psi_{L}\partial_{j}\Psi_{L}+2b_{i}\Psi_{L}\partial_{i}\Psi_{L}+d_{1}\Psi_{L}\big)\,{\rm d}x{\rm d}r\nonumber\\
&\leq \mathscr{C}\Big(\int_{\R}|\bar{\psi}_{L}(f(x))|\,{\rm d}x+\int_{f(K)}^{L}|\Psi_{L}(K,r)|\,{\rm d}r+\int_{f(-K)}^{L}|\Psi_{L}(-K,r)|\,{\rm d}r\Big),
\end{align}
where we have used the uniform bounds for $\frac{|\nabla \Psi_{L}|}{r}$ and $\Psi_{L}$ established in Lemmas \ref{lem1}--\ref{lem2}.

Using similar calculation to \eqref{4.5}, we have
\begin{align}\label{4.14}
\iint_{\Omega_{L,K}}\frac{|\nabla \Psi_{L}|^2}{r}\,{\rm d}x{\rm d}r\leq \mathscr{C}\Big(\int_{\R}|\bar{\psi}_{L}(f(x))|\,{\rm d}x+\int_{0}^{L}|\Psi_{L}(K,r)|\,{\rm d}r+\int_{0}^{L}|\Psi_{L}(-K,r)|\,{\rm d}r\Big).
\end{align}
Due to the asymptotic behavior \eqref{4.8}, we have $\Psi_{L}(\pm K,r)\to 0$. Then, taking $K\to \infty$, we obtain from \eqref{4.14} that
\begin{align}\label{4.15}
\iint_{\Omega_{L}}\frac{|\nabla\Psi_{L}|^2}{r}\,{\rm d}x{\rm d}r\leq \mathscr{C}\int_{\R}|\bar{\psi}_{L}(f(x))|\,{\rm d}x\leq \mathscr{C}\rho_{\infty}.
\end{align}
This completes the proof of the Lemma \ref{lem3}. $\hfill\square$

\medskip

Similar to the Step 6 in the proof of Lemma \ref{2.1}, we are going to show the constant $\rho_{\infty,L}^{*}$ in Lemma \ref{lem2.2} can be chosen independently of $L$. Given $\rho_{\infty}\in (\rho_{\infty}^{*},\infty)$, let $\mathcal{S}_{L}(\rho_{\infty})$ be the set of all solutions of the problem \eqref{2.9} associated with $\rho_{\infty}$. Define
\begin{align}\label{4.16}
\mathcal{Q}_{L}(\rho_{\infty})=\sup_{\psi_{L}\in \mathcal{S}_{L}(\rho_{\infty})}\sup_{(x,r)\in \overline{\Omega}_{L}}\frac{|\nabla \psi_{L}(x,r)|}{rH_{L}^{\frac{\gamma+1}{2}}\Big(\frac{|\nabla \psi_{L}|^2}{r^2},\psi_{L};\rho_{\infty}\Big)}.
\end{align}
Set
\begin{align}\label{4.17}
\bar{\rho}_{\infty,L}=\operatorname{inf}\Big\{s\,|\,\mathcal{Q}_{L}(\rho_{\infty})<1-2\varepsilon_{0}\text{ for any $\rho_{\infty}>s$}\Big\}.
\end{align}

\begin{lemma}\label{lem4}
If $\bar{\rho}_{\infty,L}>2\rho_{\infty}^{*}$, then $\mathcal{Q}_{L}(\bar{\rho}_{\infty,L})=1-2\varepsilon_{0}$.
\end{lemma}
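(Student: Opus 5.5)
We follow Step 6 in the proof of Lemma \ref{lem2.1}, now using the $L$-uniform estimates of Lemmas \ref{lem1}--\ref{lem3} in place of the $k$-uniform ones. Write $\bar\rho:=\bar{\rho}_{\infty,L}>2\rho_{\infty}^{*}$. The claim $\mathcal{Q}_{L}(\bar\rho)=1-2\varepsilon_{0}$ splits into the two inequalities $\mathcal{Q}_{L}(\bar\rho)\le 1-2\varepsilon_0$ and $\mathcal{Q}_{L}(\bar\rho)\ge 1-2\varepsilon_0$, each proved separately.

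\textbf{Upper bound.} By the definition \eqref{4.17}, there is a sequence $\rho_{\infty}^{(n)}\downarrow\bar\rho$ with $\mathcal{Q}_{L}(\rho_{\infty}^{(n)})<1-2\varepsilon_{0}$. Since $\bar\rho>2\rho_\infty^*\ge(1+\varepsilon)\rho_\infty^*$ with $\varepsilon=1$, Lemmas \ref{lem1}--\ref{lem2} apply uniformly along the sequence. Together with \eqref{2.14-1} and the uniform ellipticity guaranteed by \eqref{2.15-1}, they give $C^{1,\beta}$ bounds on the solutions $\psi_L^{(n)}$ that are uniform in $n$. Interior Schauder estimates then upgrade these to $C^{2,\beta}$ bounds on compact subsets.

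By Arzel\`a--Ascoli, a subsequence converges in $C^{1,\beta'}_{loc}(\overline{\Omega}_L)$ to a solution of \eqref{2.9} with $\rho_\infty=\bar\rho$. Passing to the limit, this solution satisfies the ratio bound $\le 1-2\varepsilon_0$. The far-field behaviour \eqref{4.8} gives the decay in $x$ needed to control the supremum uniformly. Conversely, any solution in $\mathcal{S}_L(\bar\rho)$ can be approximated in the same way by solutions for nearby $\rho_\infty$, using the continuous dependence on the parameter exactly as in Step 6 of Lemma \ref{lem2.1}. This yields $\mathcal{Q}_L(\bar\rho)\le1-2\varepsilon_0$.

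\textbf{Lower bound, by contradiction.} Suppose $\mathcal{Q}_L(\bar\rho)<1-2\varepsilon_0$. Then the truncation $\chi_0$ is inactive and every solution at $\bar\rho$ is a genuine subsonic solution with a uniform gap $1-2\varepsilon_0-\mathcal{Q}_L(\bar\rho)>0$. The goal is to show this gap persists for $\rho_\infty\in(\bar\rho-\delta,\bar\rho]$ for some $\delta>0$, which contradicts the definition of $\bar\rho$ as an infimum.

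To obtain persistence, solve the truncated problem at such $\rho_\infty$ as in Lemma \ref{lem2.1}. The uniform bounds of Lemmas \ref{lem1}--\ref{lem3} hold because $\rho_\infty>2\rho_\infty^*$ still. By compactness, any sequence of solutions with $\rho_\infty\to\bar\rho$ converges to a solution at $\bar\rho$, so the ratio is continuous in $\rho_\infty$. Hence solutions for $\rho_\infty$ near $\bar\rho$ also satisfy the strict bound $<1-2\varepsilon_0$.

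\textbf{Main obstacle.} The difficulty is that $\Omega_L$ is unbounded in $x$, so local convergence alone does not control the supremum in \eqref{4.16}. We would handle this with the uniform far-field convergence to $\bar\psi_L$ from \eqref{4.8}. We would also use the uniform $C^1$ bound \eqref{4.7-5}, which gives control of $|\nabla\psi_L|/r$ near the axis. Together these reduce the supremum to a compact region, where local convergence suffices. Away from the axis and the far field, the ratio is governed by the upstream Mach number, which is $<1-2\varepsilon_0$ for $\rho_\infty$ bounded away from $\rho_\infty^*$ after a suitable choice of $\varepsilon_0$.
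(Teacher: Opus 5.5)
This is essentially the paper's argument. Continuous dependence on $\rho_\infty$ gives $\mathcal{Q}_L(\bar\rho_{\infty,L})\le 1-2\varepsilon_0$, and a strict inequality would persist on some interval $(\bar\rho_{\infty,L}-\delta,\bar\rho_{\infty,L})$ (the paper invokes the bound of Lemma~\ref{lem2} at this point), contradicting the definition of $\bar\rho_{\infty,L}$ as an infimum.

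Your far-field discussion goes beyond what the paper does, with two cautions. First, \eqref{4.8} is a per-solution statement, not uniform in $\rho_\infty$. Second, the far-field ratio at $\bar\rho_{\infty,L}$ is already below $1-2\varepsilon_0$ by the contradiction hypothesis itself, so no re-choice of $\varepsilon_0$ is needed. Nor is one allowed, since $\bar\rho_{\infty,L}$ depends on $\varepsilon_0$.
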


\noindent\textbf{Proof}. First, it follows from the continuous dependence on the parameter $\rho_{\infty}$ for solution of uniformly elliptic equations that $\mathcal{Q}_{L}(\bar{\rho}_{\infty,L})\leq 1-2\varepsilon_{0}$. If $\mathcal{Q}_{L}(\bar{\rho}_{\infty,L})<1-2\varepsilon_{0}$ and $\rho_{\infty}>2\rho_{\infty}^{*}$, noting from Lemma \ref{lem2} that $\mathcal{Q}_{L}(\bar{\rho}_{\infty,L})\leq C\rho_{\infty}^{\frac{1-\gamma}{2}}$, there exists a $\delta>0$ such that $\mathcal{Q}_{L}(s)\leq 1-2\varepsilon_{0}$ for any $s\in (\bar{\rho}_{\infty,L}-\delta,\bar{\rho}_{\infty,L})$, which contradicts with the definition $\bar{\rho}_{\infty,L}$. Therefore, the proof of Lemma \ref{lem4} is complete. $\hfill\square$

\begin{lemma}\label{lem5}
There exists a $\bar{\rho}_{\infty}\in (\rho_{\infty}^{*},\infty)$ independent of $L$ such that if $\rho_{\infty}>\bar{\rho}_{\infty}$, there exists a subsonic solution $\psi_{L}$ of \eqref{2.9} satisfying
\begin{align}\label{4.18}
	\sup_{(x,r)\in \bar{\Omega}_{L}}\frac{|\nabla \psi_{L}|}{rH_{L}^{\frac{\gamma+1}{2}}\Big(\frac{|\nabla \psi_{L}|^2}{r^2},\psi_{L};\rho_{\infty}\Big)}< 1-2\varepsilon_{0}.
\end{align}
\end{lemma}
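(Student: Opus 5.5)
The plan is to mimic Step 6 of the proof of Lemma \ref{lem2.1}, now relying on estimates that are uniform in $L$. Fix $\varepsilon>0$ (say $\varepsilon=1$), and take $\bar L$ from Propositions \ref{prop1}--\ref{prop2} and Lemmas \ref{lem1}--\ref{lem2}, so that for $L>\bar L$ and $\rho_\infty>2\rho_\infty^{*}$ every subsonic solution $\psi_L$ of \eqref{2.9} obeys the uniform bounds \eqref{4.6} and \eqref{4.7-5}. The aim is to show that $\bar\rho_{\infty,L}$, defined in \eqref{4.17}, is bounded above by a constant independent of $L$. Once that is done, set $\bar\rho_\infty=\max\{2\rho_\infty^{*},C_{\varepsilon_0}\}$. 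For $\rho_\infty>\bar\rho_\infty$, the definition of $\bar\rho_{\infty,L}$ gives $\mathcal Q_L(\rho_\infty)<1-2\varepsilon_0$. Combined with the existence part of Lemma \ref{lem2.2}, whose solution is then non-truncated, this yields a solution satisfying \eqref{4.18}. The finitely many $L\le\bar L$ can be handled by enlarging $\bar\rho_\infty$ to exceed $\rho^*_{\infty,L}$ for those $L$, or one can simply restrict to $L>\bar L$, since only large $L$ matters for the later limit.

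\textbf{Key estimate.} Suppose $\bar\rho_{\infty,L}>2\rho_\infty^{*}$. Lemma \ref{lem4} then gives $\mathcal Q_L(\bar\rho_{\infty,L})=1-2\varepsilon_0$. So there exist a solution $\psi_L$ at $\rho_\infty=\bar\rho_{\infty,L}$ and points where the ratio in \eqref{4.16} is arbitrarily close to $1-2\varepsilon_0$; by continuity and compactness in $x$ one can take it equal to $1-2\varepsilon_0$.

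\textbf{Bounding the ratio.} The numerator is handled by writing $\nabla\psi_L=\nabla\Psi_L+\nabla\bar\psi_L$. Here $|\nabla\bar\psi_L|/r=\rho_\infty u_{\infty,L}(r)\le C\rho_\infty$, and by Lemma \ref{lem2}, $|\nabla\Psi_L|/r\le\mathscr C\rho_\infty$, with $\mathscr C$ independent of $L$. For the denominator, $H_L\sim O(\rho_\infty)$ uniformly, as in \eqref{2.22-2}, because $\rho_*(B_L)$ and $\rho^*(B_L)$ are comparable to $\rho_\infty$ with constants depending only on $\max u_\infty$. Hence
$$
1-2\varepsilon_0\le \frac{(C+\mathscr C)\rho_\infty}{c\,\rho_\infty^{\frac{\gamma+1}{2}}}=C'\bar\rho_{\infty,L}^{\frac{1-\gamma}{2}}.
$$
This forces $\bar\rho_{\infty,L}\le C_{\varepsilon_0}:=(C'/(1-2\varepsilon_0))^{\frac{2}{\gamma-1}}$, which is independent of $L$. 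In either case, $\bar\rho_{\infty,L}\le\max\{2\rho_\infty^{*},C_{\varepsilon_0}\}$.

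\textbf{Main obstacle.} The delicate point is that Lemma \ref{lem2} is stated for subsonic solutions, while $\mathcal Q_L$ ranges over solutions of \eqref{2.9}, which could a priori be only those of the truncated problem. I would handle this by observing that at $\rho_\infty=\bar\rho_{\infty,L}$ the ratio is at most $1-2\varepsilon_0$. On that set the truncation $\chi_0$ is inactive, so the solution genuinely solves \eqref{2.9} and is subsonic, and Propositions \ref{prop1}--\ref{prop2} and Lemmas \ref{lem1}--\ref{lem2} apply with $L$-independent constants. A secondary check is that the near-axis Moser iteration constant in Lemma \ref{lem2} does not depend on $L$. This holds because the rescaled equation \eqref{4.7-3} has ellipticity controlled by $\varepsilon_0$ and bounded data $\tilde\psi_L\le C\rho_\infty$.
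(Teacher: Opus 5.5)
Your proposal is correct and is essentially the paper's proof: you assume $\bar\rho_{\infty,L}>2\rho_\infty^{*}$, invoke Lemma \ref{lem4} to get a solution at $\rho_\infty=\bar\rho_{\infty,L}$ whose ratio reaches $1-2\varepsilon_0$, bound that ratio by $\mathscr{C}_1\bar\rho_{\infty,L}^{\frac{1-\gamma}{2}}$ using Lemma \ref{lem2} and $H_L\sim O(\rho_\infty)$, and take $\bar\rho_\infty$ to be the maximum of $2\rho_\infty^{*}$ and the resulting $L$-independent bound. Your exponent $\frac{2}{\gamma-1}$ is the right one (the paper's displayed $\frac{\gamma-1}{2}$ is a slip), and you can drop the appeal to compactness in $x$, which is unavailable on the unbounded $\Omega_L$ and unnecessary because only the value of the supremum enters the estimate.
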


\noindent\textbf{Proof}. If $\bar{\rho}_{\infty,L}>2\rho_{\infty}^{*}$, it follows from Lemma \ref{lem4} that the problem \eqref{2.9} associated with $\rho_{\infty}=\bar{\rho}_{\infty,L}$ admits a solution $\psi_{L}$ satisfying
\begin{align}\label{4.19}
\sup_{(x,r)\in \bar{\Omega}_{L}}\frac{|\nabla\psi_{L}(x,r)|}{rH_{L}^{\frac{\gamma+1}{2}}(\frac{|\nabla \psi_{L}|^2}{r^2},\psi_{L};\rho_{\infty})}=1-2\varepsilon_{0}.
\end{align}
Then it follows from Lemma \ref{lem2} that
\begin{align*}
1-2\varepsilon_{0}= \frac{\sup_{(x,r)\in\Omega_{L}}\frac{|\nabla\psi_{L}|}{r}}{\inf_{(x,r)\in\Omega_{L}}H_{L}^{\frac{\gamma+1}{2}}\Big(\frac{|\nabla \psi_{L}|^2}{r^2},\psi_{L};\rho_{\infty}\Big)}\leq \frac{\mathscr{C}\rho_{\infty}}{C\rho_{\infty}^{\frac{\gamma+1}{2}}}=\mathscr{C}_{1}\rho_{\infty}^{\frac{1-\gamma}{2}},
\end{align*} 
where $\mathscr{C}_{1}$ is a constant independent of $L$. Therefore, we have
$$
\bar{\rho}_{\infty,L}\leq \big(\frac{\mathscr{C}_{1}}{1-2\varepsilon_{0}}\big)^{\frac{\gamma-1}{2}}.
$$
Choose
$$
\bar{\rho}_{\infty}=\max\big\{2\rho_{\infty}^{*},\big(\frac{\mathscr{C}_{1}}{1-2\varepsilon_{0}}\big)^{\frac{\gamma-1}{2}}\big\},
$$
it is easy to see that $\bar{\rho}_{\infty}$ is independent of $L$ and that if $\rho_{\infty}>\bar{\rho}_{\infty}$, the problem \eqref{2.9} admits a solution $\psi_{L}$ satisfying \eqref{4.18}. Therefore, the proof of Lemma \ref{lem5} is complete. $\hfill\square$

\medskip

\noindent\textbf{Proof of Proposition \ref{prop4.1}}: In view of Lemmas \ref{lem1}--\ref{lem5}, if $\rho_{\infty}>\bar{\rho}_{\infty}$,  the problem \eqref{2.9} admits a solution $\psi_{L}$ satisfying
\begin{align*}
&0\leq \psi_{L}\leq \bar{\psi}_{L},\quad |\psi_{L}-\bar{\psi}_{L}|\leq C\rho_{\infty},\quad \frac{|\nabla (\psi_{L}-\bar{\psi}_{L})|}{r}\leq \mathscr{C}\rho_{\infty},\\
&\Big\|\frac{\nabla (\psi_{L}-\bar{\psi}_{L})}{r^{\frac{1}{2}}}\Big\|_{L^2(\Omega_{L})}\leq \mathscr{C}\rho_{\infty},\quad \sup_{(x,r)\in \overline{\Omega}_{L}}\frac{|\nabla\psi_{L}|}{rH_{L}^{\frac{\gamma+1}{2}}\Big(\frac{|\nabla \psi_{L}|^2}{r^2},\psi_{L};\rho_{\infty}\Big)}< 1-2\varepsilon_{0},
\end{align*}
where $\mathscr{C}$ depends on $\varepsilon_{0}$ but is independent of $L$. By uniform estimates of $\psi_L$ in Lemmas 4.4–4.6,  taking $L\to \infty$, there exists a sub-sequence of $\{\psi_{L}\}$ still labeled by $\psi_{L}$ converging to $\psi$ satisfying
\begin{align*}
&0\leq \psi\leq \bar{\psi},\quad |\psi-\bar{\psi}|\leq C\rho_{\infty},\quad \frac{|\nabla (\psi-\bar{\psi})|}{r}\leq \mathscr{C}\rho_{\infty},\\
&\Big\|\frac{\nabla (\psi-\bar{\psi})}{r^{\frac{1}{2}}}\Big\|_{L^2(\Omega)}\leq \mathscr{C}\rho_{\infty},\quad \sup_{(x,r)\in \bar{\Omega}}\frac{|\nabla\psi|}{rH^{\frac{\gamma+1}{2}}(\mathcal{M},\psi;\rho_{\infty})}< 1-2\varepsilon_{0}.	    
\end{align*}
Here $\bar{\psi}(r)=\rho_{\infty}\int_{0}^{r}u_{\infty}(s)s\,{\rm d}s$, and $\psi$ is the solution to \eqref{1.13}. Therefore, the proof of Proposition \ref{prop4.1} is complete. $\hfill\square$

\section{Fine Properties of Subsonic Solutions}

In this section, we study properties of subsonic solutions constructed in Proposition \ref{prop4.1}, such as the asymptotic behaviors in far-fields, positivity of axial velocity, uniqueness of subsonic Euler flow and existence of critical density $\rho_{cr}$. Consequently, the proof of Theorem \ref{thm1} will be given at the end of the section.

\subsection{Asymptotic behavior at far fields} Noting that
$\Psi(x,r)=\psi(x,r)-\bar{\psi}(r)$ satisfies
\begin{align*}
&\Big(\Big(1-\frac{|\nabla\psi|^2}{r^2H^{\gamma+1}}\Big)\delta_{ij}+\frac{\partial_{i}\psi\partial_{j}\psi}{r^2H^{\gamma+1}}\Big)\partial_{ij}\Psi\\
&\quad =r^2\theta_{L}\theta_{L}'H_{L}^2-r\rho_{\infty}u_{\infty}+\frac{|\partial_{1}\psi|^2}{r^2H^{\gamma+1}}\rho_{\infty}(u_{\infty}(r)+ru_{\infty}'(r))+\frac{\partial_{2}\Psi}{r}.
\end{align*}
Since $\|\Psi\|_{L^{\infty}(\bar{\Omega})}\leq C\rho_{\infty}$, similar to the proof of Lemma \ref{lem2}, we have
\begin{align}\label{5.3}
\|\Psi\|_{C^{1}(\bar{\Omega})}\leq \mathscr{C}\rho_{\infty}r.
\end{align}
Therefore, if $r=0$, we directly have
$$
|\nabla \Psi(x,0)|=0\Rightarrow \lim\limits_{|x|\to \infty}|\nabla \Psi(x,0)|=0.
$$
Moreover, we claim that
\begin{align}\label{5.1}
\lim\limits_{\sqrt{x^2+r^2}\to \infty}\frac{|\nabla\Psi(x,r)|}{r^{\frac{1}{2}}}=0.
\end{align}

\small

We shall prove \eqref{5.1} by a contradiction argument. If \eqref{5.1} does not hold, then there exist a constant $\delta_{0}>0$ and a sequence $\{(x^{i},r^{i})\}_{i=1}^{\infty}$ with $\sqrt{|x^{i}|^2+|r^{i}|^2}\to 0$ as $i\to \infty$, such that $\frac{|\nabla\Psi(x_{0},r^{i})|}{(r^{i})^{\frac{1}{2}}}\geq \delta_{0}$. It follows from \eqref{5.3} that $\frac{|\nabla \Psi(x,r)|}{r^{\frac{1}{2}}}$ is uniformly continuous in $(x,r)$. Then there exists a uniform constant $\tau_{0}>0$ such that
$$
\frac{|\nabla\Psi(x_{0},r)|^2}{r}\geq \frac{\delta_{0}^2}{4}\quad \text{for any $(x,r)\in B_{\tau_{0}}(x^{i},r^{i})$}
$$
and
$$
B_{\tau_{0}}(x^{i},r^{i})\cap B_{\tau_{0}}=\emptyset\quad\text{for $i\neq j$}.
$$
Thus, it holds that
\begin{align*}
\iint_{\cup_{i}B_{\tau_{0}}(x^{i},r^{i})}\frac{|\nabla\Psi|^2}{r}\,{\rm d}x{\rm d}r=\sum\limits_{i=1}^{\infty}\iint_{B_{\tau_{0}(x^{i},r^{i})}}\frac{|\nabla \Psi|^2}{r}\,{\rm d}x{\rm d}r=\infty,
\end{align*}
which is a contradiction with \eqref{2.33} in Proposition \ref{prop4.1}.

In particular, it follows directly from \eqref{5.1} that for any $r>0$,
\begin{align}\label{5.1-1}
\lim\limits_{|x|\to \infty}\frac{|\nabla \Psi|}{r}=0\Rightarrow \lim\limits_{|x|\to \infty}|(\rho u-\rho_{\infty}u_{\infty}),\rho v|=0,
\end{align}
and
for any $x\in \R$,
\begin{align}\label{5.1-2}
\lim\limits_{r\to \infty}\frac{|\nabla \Psi|}{r}=0\Rightarrow \lim\limits_{r\to \infty}|(\rho u-\rho_{\infty}u_{\infty}),\rho v|=0.
\end{align}

\subsection{Positivity of axial velocity away from axis}

In view of Proposition \ref{Equivlence}, the flow $(\rho,u,v)=(H,\frac{\partial_{r}\psi}{rH},-\frac{\partial_{x}\psi}{rH})$ is indeed a solution of steady Euler equations \eqref{1.3} with \eqref{1.3-1}--\eqref{1.3-2}, if it satisfies \eqref{E1}--\eqref{E2}. By using the far-field behaviors \eqref{5.1-1}--\eqref{5.1-2} and similar arguments as in \cite[Section 4.2]{CDXX-2016}, \eqref{E1}--\eqref{E2} hold except for the positivity of axial velocity away from the axis. As mentioned previously, we are not able to derive the $U=:\partial_{r}\psi_{r}>0$ by applying $\partial_{r}$ to \eqref{1.13} and using the energy estimates, since the coefficients in \eqref{1.13} depend on $r$ and the associated term after applying $\partial_{r}$ is out of control in the energy estimates.

Therefore, to show $U=:\partial_{r}\psi>0$ away from the axis, we should combine the far-field behaviors \eqref{5.1-1}--\eqref{5.1-2} and the intrinsic strong maximum principle in the equation of $U$. In fact, it follows directly from \eqref{5.1-1}--\eqref{5.1-2} that for any $r>0$,
\begin{align}\label{6.2}
U=\partial_{r}\psi=r\rho u>0\quad \text{for $|x|\geq K$ with $K$ sufficiently large},
\end{align}
and for any $x\in \R$,
\begin{align}\label{6.3}
U=\partial_{r}\psi=r\rho u>0\quad \text{for $r\geq L$ with $L$ sufficiently large}.
\end{align}
On the other hand, noting \eqref{2.14-2}, we have
\begin{align}\label{6.4}
\partial_{r}\psi=\lim\limits_{L\to \infty}\partial_{r}\psi_{L}\geq 0\quad \text{for $(x,r)\in \overline{\Omega}\cap \{(x,r)\,|\,|x|\leq K,\,\,0<r\leq L\}$}.
\end{align}
Using similar calculations as in \eqref{2.43}, $U=\partial_{r}\psi$ satisfies
\begin{align}\label{6.5}
a_{ij}\partial_{ij}U+b_{i}\partial_{i}U+cU
=2r\Theta\Theta'H_{L}^2\leq 0.
\end{align}
Then, using \eqref{6.2}--\eqref{6.4} and applying the strong maximum principle without requiring the sign of $c$ to \eqref{6.5}, we can see that $U$ cannot attain the minimum value zero in $\Omega$, \textit{i.e.},
\begin{align}\label{6.6}
U=\partial_{r}\psi=r\rho u>0\quad \text{in }\Omega\Rightarrow u>0\quad \text{in }\Omega.
\end{align}
Moreover, since $\psi$ attains the minimum value on $\{(x,r)\,|\,r=f(x),\,\,x\in (0,1)\}$, we obtain from the Hopf point lemma that for $x\in (0,1)$,
$$
\partial_{r}\psi\vert_{\{(x,r)\,|r=f(x),x\in (0,1)\}}=(r\rho u)\vert_{\{(x,r)\,|r=f(x),x\in (0,1)\}}>0\Rightarrow u\vert_{\{(x,r)\,|r=f(x),x\in (0,1)\}}>0,
$$
which, together with \eqref{6.6}, implies that the axial velocity $u$ is positive away from the axis.

\subsection{The uniqueness of subsonic Euler flows past an axisymmetric obstacle}

In this subsection, we show the uniqueness of subsonic Euler solutions satisfying the boundary condition \eqref{1.3-1}--\eqref{1.3-2} and properties \eqref{1.3-5}--\eqref{1.3-9}.

Assume that $\psi^{(i)}\in C^{2,\beta}(\Omega)\cap C^{1,\beta}(\overline{\Omega})$ ($i=1,2$) solve the problem \eqref{1.13} and satisfy
\begin{align}\label{7.1}
&\psi^{(i)}-\bar{\psi}\in L^{\infty}(\Omega),\quad \Big\|\frac{\nabla (\psi^{(i)}-\bar{\psi})}{r^{\frac{1}{2}}}\Big\|_{L^2(\Omega)}\leq \mathscr{C}\rho_{\infty},\quad \text{and}\nonumber\\
\,\,&\frac{|\nabla \psi^{(i)}|^2}{r^2}< (1-2\varepsilon_{0})H^{\gamma+1}(\mathcal{M}_{i},\psi^{(i)};\rho_{\infty}),\quad \lim\limits_{|x|\to \infty}|\psi^{(i)}-\bar{\psi}|_{C^{0}(\Omega\cap \{(x,r)\,|\,r\leq L\})}=0,
\end{align}
for any fixed $L>0$. Set $\phi=\psi^{(1)}-\psi^{(2)}$. Then it holds that
\begin{align}\label{7.2}
\|\phi\|_{L^{\infty}(\Omega)}\leq C\rho_{\infty},\quad \Big\|\frac{\nabla \phi}{r^{\frac{1}{2}}}\Big\|_{L^{2}(\Omega)}\leq \mathscr{C}\rho_{\infty},\quad \lim\limits_{|x|\to \infty}|\phi|_{C^{0}(\Omega\cap \{(x,r)\,|\,r\leq L\})}=0.
\end{align}

Similar to \eqref{4.4}, $\phi$ solves
\begin{align}\label{7.3}
\left\{\begin{aligned}
	&\pa_{i}\big(a_{ij}(\nabla \psi^{(1)},\psi^{(1)};\nabla \psi^{(2)},\psi^{(2)})\partial_{j}\phi)+\partial_{i}\big(b_{i}(\nabla \psi^{(1)},\psi^{(1)};\nabla \psi^{(2)},\psi^{(2)})\phi\big)\\
	&\quad =c_{i}(\nabla \psi^{(1)},\psi^{(1)};\nabla \psi^{(2)},\psi^{(2)})\partial_{i}\phi+d(\nabla \psi^{(1)},\psi^{(1)};\nabla \psi^{(2)},\psi^{(2)})\phi\quad \text{in }\Omega,\\
	&\phi=0\quad \text{on }r=f(x),
\end{aligned}
\right.
\end{align}
where $a_{ij}$, $b_{i}$, $c_{i}$ and $d$ are similarly defined in \eqref{4.4-2} with $H_{L}$ and $\theta_{L}$ replaced by $H$ and $\Theta$ respectively. For simplicity of notation, we denote $H_{(t)}=H(\frac{|\nabla \psi_{(t)}|^2}{r^2},\psi_{(t)};\rho_{\infty})$ with $\psi_{(t)}=(1-t)\psi^{(1)}+t\psi^{(2)}$ for $t\in [0,1]$.

For any $R>2\max\{J,1\}$, let $\eta(x,r)$ be a smooth spherically symmetric function defined by
\begin{align}\label{7.5}
\eta(x,r)=\eta(\tilde{r})=\left\{
\begin{aligned}
	&1\qquad \qquad\qquad \text{for }(x,r)\in \{(x,r)\,|\,\tilde{r}\leq R\},\\
	&\frac{2\ln R-\ln \tilde{r}}{\ln R}\quad\,\text{for }(x,r)\in \{(x,r)\,|\,R\leq \tilde{r}\leq R^2\},\\
	&0,\qquad\qquad\quad\,\,\,\, \text{for }(x,r)\{(x,r)\,|\,\tilde{r}\geq R^2\},
\end{aligned}
\right.
\end{align}
where $\tilde{r}=:\sqrt{r^2+x^2}$. It is clearly that $|\nabla\eta|=\frac{1}{\tilde{r}\ln R}$. Denote $\Omega^{R}=\Omega\cap \{(x,r)\,|\,\tilde{r}\leq R\}$. Multiplying the both sides of the equation \eqref{7.3} by $\eta^2(x,r)\phi$ and integrating in $\Omega$ gives that
\begin{align}\label{7.6}
&\iint_{\Omega^{R^2}}\eta^2(x,r)\Bigg[\int_{0}^1\frac{|\nabla\phi|^2}{rH_{(t)}}+\frac{|\nabla\phi\cdot \nabla \psi_{(t)}-r^2H_{(t)}^2\Theta(\psi_{(t)})\Theta'(\psi_{(t)})\phi|^2}{r^3H_{(t)}(H_{(t)}-\mathcal{M}_{(t)})}\,{\rm d}t\Bigg]\,{\rm d}x{\rm d}r\nonumber\\
&\quad +\iint_{\Omega^{R^2}}\eta^{2}(x,r)\Big[\int_{0}^{1}r\phi^2\int_{0}^{1}H_{(t)}[\Theta\Theta''+\big(\Theta'\big)^2]\,{\rm d}t\Big]\,{\rm d}x{\rm d}r\nonumber\\
&=-2\iint_{\Omega^{R^2}}\Big(a_{ij}\eta\eta\pa_{i}\eta\phi\pa_{j}\phi-\eta\pa_{i}\eta\phi^2\int_{0}^{1}\frac{\pa_{i}\psi_{(t)}\Theta\Theta'H_{(t)}}{r(H_{(t)}-\mathcal{M}_{(t)})}\,{\rm d}t\Big)\,{\rm d}x{\rm d}r.
\end{align}
Note that the second term on the left hand side of \eqref{7.6} is non-negative due to the structural condition $u_{\infty}''(r)r\geq u_{\infty}'(r)$. Therefore, we obtain from \eqref{7.6} that
\begin{align}\label{7.7}
&\iint_{\Omega^{R^2}}\eta^2(x,r)\Bigg[\int_{0}^1\frac{|\nabla\phi|^2}{rH_{(t)}}+\frac{|\nabla\phi\cdot \nabla \psi_{(t)}-r^2H_{(t)}^2\Theta(\psi_{(t)})\Theta'(\psi_{(t)})\phi|^2}{r^3H_{(t)}(H_{(t)}-\mathcal{M}_{(t)})}\,{\rm d}t\Bigg]\,{\rm d}x{\rm d}r\nonumber\\
&\leq \mathscr{C}\iint_{\Omega^{R^2}}\Big[\eta|\phi|\frac{|\nabla \eta\cdot \nabla\phi|}{r}+\eta|\phi||\nabla \eta|\int_{0}^1|\Theta\Theta'H_{(t)}|\,{\rm d}t\Big]\,{\rm d}x{\rm d}r\nonumber\\
&=:I_{1}+I_{2},
\end{align}
where we have used the facts that $|a_{ij}|\leq \frac{1}{r}$, and $|\phi|+\frac{|\nabla \psi_{(t)}|}{r}\leq \mathscr{C}\rho_{\infty}$.

For $I_{1}$, we split it into two parts
\begin{align}\label{7.8}
I_{1}&=\mathscr{C}\iint_{\Omega^{R^2}\cap \{r\geq J\}}\eta|\phi|\frac{|\nabla \eta\cdot \phi|}{r}\,{\rm d}r{\rm d}x+\mathscr{C}\iint_{\Omega^{R^2}\cap \{r\leq J\}}\eta|\phi|\frac{|\nabla \eta\cdot \phi|}{r}\,{\rm d}r{\rm d}x=I_{1,1}+I_{1,2}.
\end{align}
For $I_{1,1}$, it follows from the H\"{o}lder inequality that
\begin{align}\label{7.8-1}
I_{1,1}\leq \mathscr{C}\Big(\iint_{\Omega^{R^2}\cap \{r\geq J\}}|\nabla \eta|^2\,{\rm d}r{\rm d}x\Big)^{\frac{1}{2}}\Big(\iint_{\Omega^{R^2}\cap \{r\geq J\}}\eta^2\frac{|\nabla\phi|^2}{r}\,{\rm d}x{\rm d}r\Big)^{\frac{1}{2}}\leq \mathscr{C}\|\nabla\eta\|_{L^2}\leq \frac{\mathscr{C}}{\ln R}.
\end{align}
For $I_{1,2}$, noting that $|x|\in [\frac{R}{2},R^2]$ if $(x,r)\in (\Omega^{R^2}\backslash \Omega^{R})\cap \{r\leq J\}$, one has
\begin{align}\label{7.8-2}
I_{1,2}&\leq \mathscr{C}\int_{R}^{R^2}\int_{f(x)}^{J}|\nabla \eta||\phi|\,{\rm d}r{\rm d}x\leq  \mathscr{C}\int_{R}^{R^2}\int_{f(x)}^{J}\frac{1}{x\ln R}|\phi|\,{\rm d}{r}{\rm d}x\nonumber\\
&\leq \mathscr{C}\|\phi{\bf 1}_{\{R\leq |x|\leq R^2\}}\|_{C^{0}(\Omega\cap \{(x,r)\,|\,r\leq J\})}.
\end{align}
Combining \eqref{7.8-1} and \eqref{7.8-2}, one has
\begin{align}\label{7.8-3}
I_{1}\leq \mathscr{C}\big(\frac{1}{\ln R}+\|\phi{\bf 1}_{\{R\leq |x|\leq R^2\}}\|_{C^{0}(\Omega\cap \{(x,r)\,|\,r\leq J\})}\big).
\end{align}
For $I_{2}$, we also split it into two parts
\begin{align}\label{7.9}
I_{2}&=\iint_{\Omega^{R^2}\cap \{r\geq J\}}\eta|\phi||\nabla \eta|\int_{0}^{1}|\Theta\Theta'H_{(t)}|\,{\rm d}t{\rm d}x{\rm d}r+\iint_{\Omega^{R^2}\cap \{r\leq J\}}\eta\phi|\nabla \eta|\int_{0}^{1}|\Theta\Theta' H_{(t)}|\,{\rm d}t{\rm d}r{\rm d}x\nonumber\\
&=I_{2,1}+I_{2,2}.
\end{align}
For $I_{2,2}$, using similar calculations as in \eqref{7.8-2}, one has
\begin{align}\label{7.9-2}
I_{2,2}\leq \mathscr{C}\|\phi{\bf 1}_{\{R\leq |x|\leq R^2\}}\|_{C^{0}(\Omega\cap \{(x,r)\,|\,r\leq J\})}.
\end{align}
For $I_{2,1}$, a direct calculation shows that
\begin{align}\label{7.9-1}
I_{2,1}&\leq \mathscr{C}\Big(\iint_{\Omega^{R^2}\cap \{r\geq J\}}|\nabla \eta|^2\,{\rm d}r{\rm d}x\Big)^{\frac{1}{2}}\Big(\iint_{\Omega^{R^2}\cap \{r\geq J\}}\eta^2\int_{0}^{1}\frac{r^4|\Theta\Theta'H_{(t)}|^2\phi^2H_{(t)}^{2}}{r^4H_{(t)}(H_{(t)}^{\gamma+1}-\mathcal{M}_{(t)})}\,{\rm d}t\,{\rm d}r{\rm d}x\Big)^{\frac{1}{2}}\nonumber\\
&\leq \mathscr{C}\big(\frac{1}{\ln R}\big)^{\frac{1}{2}}\Big(\iint_{\Omega^{R^2}\cap \{r\geq J\}}\eta^2\int_{0}^{1}\frac{|r^2H_{(t)}^2\Theta\Theta'\phi-\nabla\phi\cdot\nabla \psi_{(t)}|^2}{r^4H_{(t)}(H_{(t)}^{\gamma+1}-\mathcal{M}_{(t)})}\,{\rm d}t\,{\rm d}r{\rm d}x\Big)^{\frac{1}{2}}\nonumber\\
&\quad +\mathscr{C}\big(\frac{1}{\ln R}\big)^{\frac{1}{2}}\Big(\iint_{\Omega^{R^2}\cap \{r\geq J\}}\eta^2\int_{0}^{1}\frac{|\nabla\phi\cdot \nabla \psi_{(t)}|^2}{r^4H_{(t)}(H_{(t)}^{\gamma+1}-\mathcal{M}_{(t)})}\,{\rm d}t\,{\rm d}r{\rm d}x\Big)^{\frac{1}{2}}\nonumber\\
&\leq \mathscr{C}\big(\frac{1}{\ln R}\big)^{\frac{1}{2}}\big(I_{1}^{\frac{1}{2}}+I_{2,1}^{\frac{1}{2}}+I_{2,2}^{\frac{1}{2}}\big)+\mathscr{C}\big(\frac{1}{\ln R}\big)^{\frac{1}{2}},
\end{align}
where we have used \eqref{7.2}, \eqref{7.7} in the last inequality. Then we obtain from \eqref{7.8-3}, and \eqref{7.9-2}--\eqref{7.9-1} that
\begin{align}\label{7.9-3}
I_{2,1}\leq \mathscr{C}\big(\frac{1}{\ln R}+\|\phi{\bf 1}_{\{R\leq |x|\leq R^2\}}\|_{C^{0}(\Omega\cap \{(x,r)\,|\,r\leq J\})}\big).
\end{align}
Substituting \eqref{7.8-3}, \eqref{7.9-2} and \eqref{7.9-3} into \eqref{7.7}, we have
\begin{align}\label{7.9-4}
\iint_{\Omega^{R^2}}\eta^2\int_{0}^{1}\frac{|\nabla\phi|^2}{rH_{(t)}}\,{\rm d}t\leq \mathscr{C}\big(\frac{1}{\ln R}+\|\phi{\bf 1}_{\{R\leq |x|\leq R^2\}}\|_{C^{0}(\Omega\cap \{(x,r)\,|\,r\leq J)\}}\big).
\end{align}
Taking $R\to \infty$ and using \eqref{7.2}, we get from \eqref{7.9-4} that
\begin{align}\label{7.9-5}
|\nabla \phi|^2=0\quad \text{in $\Omega$}.
\end{align}
Since $\phi=0$ on $\Gamma$, one has $\phi\equiv 0$ in $\Omega$. This concludes the uniqueness of subsonic Euler flows satisfying the boundary condition \eqref{1.3-1}--\eqref{1.3-2} and properties \eqref{1.3-5}--\eqref{1.3-9}.

\subsection{Existence of the critical density in the upstream}

Now, we show that there exists a critical density $\rho_{{\rm cr}}$ such that there exists a subsonic solution as long as the density of upstream flows is greater than $\rho_{cr}$. The proof is very similar to \cite[Section 6]{CDXX-2016}, \cite[Section 6]{DD-2011} and \cite[Proposition 6]{XX-2010-2}. For self-containedness, we sketch the proof here.

\begin{proposition}\label{prop3}
There exists a critical value $\rho_{cr}>0$ such that if $\rho_{\infty}>\rho_{cr}$, there exists a unique $\psi$ which solves the following problem
\begin{align}\label{6.1-1}
	\left\{
	\begin{aligned}
		&\operatorname{div}\Big(\frac{\nabla \psi}{rH(\mathcal{M},\psi;\rho_{\infty})}\Big)=r\Theta(\psi;\rho_{\infty})\Theta'(\psi;\rho_{\infty})H(\mathcal{M},\psi;\rho_{\infty})\quad \text{in $\Omega$},\\
		&\psi=0\quad \text{on $\Gamma$},
	\end{aligned}
	\right.
\end{align}
and satisfies
\begin{align*}
	0\leq \psi\leq \bar{\psi},\quad |\psi-\bar{\psi}|\leq C\rho_{\infty}\quad \text{in }\bar{\Omega},\quad \Big\|\frac{|\nabla(\psi-\bar{\psi})|}{r^{1/2}}\Big\|_{L^2(\Omega)}\leq \mathscr{C}\rho_{\infty},
\end{align*}
and
$$
\mathcal{Q}(\rho_{\infty})=\sup_{(x,r)\in \bar{\Omega}}\frac{|\nabla \psi|}{rH^{\frac{\gamma+1}{2}}(\mathcal{M},\psi;\rho_{\infty})}<1.
$$
Moreover, either $\mathcal{Q}(\rho_{\infty})\to 1$ as $\rho_{\infty}\downarrow \rho_{cr}$ or there does not exist a $\sigma>0$ such that the problem \eqref{6.1-1} has a solution for all $\rho_{\infty}\in (\rho_{cr}-\sigma,\rho_{cr})$ and 
$$
\sup_{\rho_{\infty}\in (\rho_{cr}-\sigma,\rho_{cr})}\mathcal{Q}(\rho_{\infty})<1.
$$
\end{proposition}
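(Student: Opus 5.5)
The plan is to run the standard continuity-in-$\varepsilon_0$ argument of \cite[Section 6]{CDXX-2016} and \cite[Section 6]{DD-2011}, with Proposition \ref{prop4.1} and the uniqueness result of Subsection 5.3 as the ingredients.

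\textbf{Step 1: a family of thresholds.} Take a decreasing sequence $\varepsilon_n\downarrow 0$ in $(0,\frac14)$. For each $n$, Proposition \ref{prop4.1} with $\varepsilon_0=\varepsilon_n$ gives a threshold $\bar\rho_{\infty,n}$. For $\rho_\infty>\bar\rho_{\infty,n}$ there is a solution $\psi_n$ of \eqref{6.1-1} with the stated bounds and $\sup\frac{|\nabla\psi_n|}{rH^{\frac{\gamma+1}{2}}}<1-2\varepsilon_n$. Instead of the a priori threshold, I will use the sharp one. Define $\rho_n$ as the infimum of $s$ such that, for every $\rho_\infty>s$, a solution exists with $\mathcal Q(\rho_\infty)<1-2\varepsilon_n$ and with the bounds \eqref{2.33}.

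\textbf{Step 2: the thresholds are consistent and monotone.} By the uniqueness of Subsection 5.3, every solution with these bounds is the unique subsonic solution. So solutions for different $n$ coincide whenever both exist, and $\mathcal Q(\rho_\infty)$ is well defined. A solution with $\mathcal Q<1-2\varepsilon_n$ also has $\mathcal Q<1-2\varepsilon_{n+1}$, so $\rho_n$ is nonincreasing. It is bounded below by $\rho_\infty^*$, since subsonicity upstream forces $\rho_\infty\ge\rho_\infty^*$. Set $\rho_{cr}=\lim_n\rho_n=\inf_n\rho_n$.

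\textbf{Step 3: existence above $\rho_{cr}$.} For $\rho_\infty>\rho_{cr}$, pick $n$ with $\rho_n<\rho_\infty$. This gives a solution with $\mathcal Q<1-2\varepsilon_n<1$. The bounds $0\le\psi\le\bar\psi$, $|\psi-\bar\psi|\le C\rho_\infty$ and $\|r^{-1/2}\nabla(\psi-\bar\psi)\|_{L^2}\le\mathscr C\rho_\infty$ come from Proposition \ref{prop4.1} and Lemmas \ref{lem1}--\ref{lem3}. A subtlety is that $\mathscr C$ depends on $\varepsilon_0$, so the bound on $\rho_\infty>\rho_{cr}$ holds with a constant depending on $\rho_\infty$ through $\varepsilon_n$. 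I will state it that way, as \cite{CDXX-2016} does.

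\textbf{Step 4: the sharpness dichotomy (main obstacle).} Suppose $\mathcal Q(\rho_\infty)\not\to1$ as $\rho_\infty\downarrow\rho_{cr}$. Then there are $\varepsilon>0$ and $\rho_\infty^{(j)}\downarrow\rho_{cr}$ with $\mathcal Q(\rho_\infty^{(j)})\le 1-4\varepsilon$. Also suppose, for contradiction, that some $\sigma>0$ gives solutions on $(\rho_{cr}-\sigma,\rho_{cr})$ with $\sup\mathcal Q<1$. I would first sharpen this to the whole interval $(\rho_{cr}-\sigma,\infty)$. To do so, I would show that $\mathcal Q$ is uniformly below $1$ near $\rho_{cr}$ itself. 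The plan is to take limits of the solutions at $\rho_\infty^{(j)}$, which are uniformly subsonic with $\varepsilon_0$-dependent constants fixed by $\varepsilon$. Compactness comes from the uniform $C^{1,\beta}$ bounds of Lemma \ref{lem2} and the $L^2$ bounds of Lemma \ref{lem3}. This yields a solution at $\rho_{cr}$ with $\mathcal Q\le1-4\varepsilon$. Combining with the hypothesis gives some $\varepsilon'>0$ with $\sup_{(\rho_{cr}-\sigma',\,\rho_{cr}+\sigma')}\mathcal Q\le1-2\varepsilon'$. Choose $n$ with $\varepsilon_n<\varepsilon'$. Then $\rho_n\le\rho_{cr}-\sigma'<\rho_{cr}$, contradicting $\rho_{cr}=\inf\rho_n$.

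The delicate point is controlling $\mathcal Q$ on $(\rho_{cr},\rho_{cr}+\sigma')$, not only along the sequence. Uniqueness together with continuous dependence on $\rho_\infty$, as in Lemma \ref{lem4}, should give this. Those solutions are uniformly elliptic with $\varepsilon$-fixed constants, so $\mathcal Q$ is continuous in $\rho_\infty$ there. This continuity step, under the $L^2$-type far-field control, is where the real work lies. I would handle it by repeating the compactness plus uniqueness argument along arbitrary sequences.
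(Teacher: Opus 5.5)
Your Steps 1--3 follow the paper's argument: a sequence $\varepsilon_n\downarrow 0$, sharp thresholds $\rho^n_\infty$ (the paper takes the supremum of $\mathcal{Q}$ over all solutions rather than invoking the uniqueness of Subsection 5.3, which amounts to the same thing), their monotonicity, $\rho_{cr}=\inf_n\rho^n_\infty$, and bounds with $\varepsilon_n$-dependent constants $\mathscr{C}_n$. The paper gives no argument of its own for the dichotomy; it refers to \cite[Proposition 6]{XX-2010-2} and \cite[Section 6]{DD-2011}. Your Step 4 tries to supply that argument, and it has a genuine gap exactly at the point you flag.

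The failing step is the uniform control of $\mathcal{Q}$ on $(\rho_{cr},\rho_{cr}+\sigma')$. Negating ``$\mathcal{Q}(\rho_\infty)\to 1$'' gives good behaviour only along your sequence $\rho^{(j)}_\infty$. For $\rho_\infty$ between these points, the genuine solutions are \emph{not} known to be uniformly elliptic with $\varepsilon$-fixed constants, and that is precisely what has to be shown.

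The ellipticity ratio of these solutions is $\Lambda/\lambda=(1-\mathcal{M}/H^{\gamma+1})^{-1}$, and the constants of Lemmas \ref{lem2}--\ref{lem3} depend on $\varepsilon_0$ through it. So along a sequence $\tilde\rho^{(j)}\downarrow\rho_{cr}$ with $\mathcal{Q}(\tilde\rho^{(j)})\to 1$, which is the very scenario to be excluded, you have no compactness. Even locally uniform convergence would not control a supremum over the unbounded set $\overline{\Omega}$. Your plan of ``compactness plus uniqueness along arbitrary sequences'' is therefore circular.

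What is missing is openness at $\rho_{cr}$, and it must come from the truncated problem rather than from genuine solutions:
\begin{enumerate}
\item Place the cutoff $\chi$ at level $\varepsilon$. The equation is then uniformly elliptic for \emph{every} $\rho_\infty$.
\item Establish $\varepsilon$-uniform estimates and uniqueness for the truncated solutions. This gives their continuous dependence on $\rho_\infty$.
\item The truncated solution at $\rho_{cr}$ is your limit solution, since the cutoff is inactive there ($\mathcal{Q}\le 1-4\varepsilon$).
\item By continuity, the truncated solutions keep $\mathcal{Q}<1-2\varepsilon$ on a neighbourhood of $\rho_{cr}$, and therefore solve \eqref{6.1-1}.
\end{enumerate}
This is the ``continuous dependence on the parameter'' invoked in Step 6 of the proof of Lemma \ref{lem2.1} and in Lemma \ref{lem4}.

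With this openness, your final contradiction goes through, provided you take $n$ large enough that also $\rho^n_\infty<\rho_{cr}+\sigma'$. In fact the $\sigma$-hypothesis is then not used at all.

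Be aware that in the exterior domain these $\varepsilon$-uniform bounds for truncated solutions are not automatic. The comparisons $\hat\psi_L\le\psi_L\le\bar\psi_L$ of Proposition \ref{prop2} rely on the exact relation between $\partial H/\partial\mathcal{M}$ and $\partial H/\partial\psi$ (the one giving $b_i=c_i$), and that relation need not survive the cutoff. The paper does not check this either.
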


\noindent\textbf{Proof}. Let $\{\varepsilon_{n}\}_{n=0}^{\infty}$ be a strictly decreasing positive sequence satisfying $\varepsilon_{0}\leq \frac{1}{4}$ and $\varepsilon_{n}\to 0$ as $n\to \infty$, and $\chi_{n}$ be a sequence of smooth increasing functions satisfying
\begin{align*}
\chi_{n}=\left\{
\begin{aligned}
	&z,\qquad\qquad\quad\text{if $|z|\leq 1-2\varepsilon_{n}$},\\
	&1-\frac{3}{2}\varepsilon_{n},\qquad\text{if }z\geq 1-\varepsilon_{n}.
\end{aligned}
\right.
\end{align*}
Given $\rho_{\infty}\in (\rho_{\infty}^{*},\infty)$ and $\varepsilon_{n}>0$, similar to Proposition \ref{prop4.1}, we can construct a solution $\psi$ to \eqref{6.1-1}. Moreover, there is a constant $\bar{\rho}_{\infty}^{n}$ depending on $\varepsilon_{n}$ such that $\rho_{\infty}>\bar{\rho}_{\infty}^{n}$, and it holds that
$$
\frac{|\nabla \psi|}{rH^{\frac{\gamma+1}{2}}(\mathcal{M},\psi;\rho_{\infty})}\leq 1-2\varepsilon_{n}.
$$
Let $\mathcal{S}^{n}(\rho_{\infty})$ be the set of all solutions of the problem \eqref{6.1-1}. Denote
\begin{align*}
\mathcal{Q}^{n}(\rho_{\infty})=\sup_{\psi\in \mathcal{S}^{n}(\rho_{\infty})}\sup_{(x,r)\in \Omega_{L}}\frac{|\nabla\psi|}{rH^{\frac{\gamma+1}{2}}(\mathcal{M},\psi;\rho_{\infty})},
\end{align*}
and
\begin{align*}
\rho_{\infty}^{n}=\inf\{s\,|\,\text{for any $\rho_{\infty}\geq s$, $\mathcal{Q}^{(n)}(\rho_{\infty})\leq 1-2\varepsilon_{n}$}\}.
\end{align*}
It is clear that $\rho_{\infty}^{n}\leq \bar{\rho}_{\infty}^{n}$. For any $\rho_{\infty}>\rho_{\infty}^{n}$, we have the following uniform estimates
\begin{align*}
0\leq \psi\leq \bar{\psi},\,\,|\psi-\bar{\psi}|\leq C\rho_{\infty}\quad \text{and}\quad  \Big\|\frac{|\nabla (\psi-\bar{\psi})|}{r^{\frac{1}{2}}}\Big\|_{L^2(\Omega)}\leq \mathscr{C}_{n}\rho_{\infty},
\end{align*}
where $\mathscr{C}_{n}$ depends on $\varepsilon_n$. It is easy to see that $\{\rho_{\infty}^{n}\}$ is a decreasing sequence. Define $\rho_{cr}=\operatorname{inf}{\rho}_{\infty}^{n}$. Using similar arguments as in \cite[Proposition 6]{XX-2010-2} and \cite[Section 6]{DD-2011}, we can prove that $\rho_{cr}$ is the desired critical value described in Proposition \ref{prop3}. Therefore, the proof of Proposition \ref{prop3} is complete. $\hfill\square$

\subsection{Proof of Theorem \ref{thm1}} As a direct consequence of Proposition \ref{prop4.1} and Subsections 5.1--5.4, The proof of Theorem \ref{thm1} is complete. $\hfill\square$

\section{Limit of Subsonic Flows: Proof of Theorem \ref{thm2}}

In this section, we give a proof of Theorem \ref{thm2}. Given a decreasing sequence of $\{\rho_{\infty}^{n}\}$ satisfying 
$$
\lim\limits_{n\to \infty}\rho_{\infty}^{n}=\rho_{cr},
$$
let $(\rho_{n},u_{n},v_{n})(x,r)\in (C^{1,\alpha}(\Omega)\cap C^{\alpha}(\bar{\Omega}))^3$ be the associated subsonic equation to \eqref{1.3} established in Theorem \ref{thm1} for $\rho_{\infty}=\rho_{\infty}^{n}$. Then
$$
(\rho_{n},\mathbf{U}_{n})(x,y,z)=\Big(\rho_{n}(x,r),u_{n}(x,r),v_{n}(x,r)\frac{y}{r},v_{n}(x,r)\frac{z}{r}\Big)
$$
is the steady solution of \eqref{1.1}. Moreover, by direct calculations, one has
\begin{itemize}
\item[(1)] the Mach numbers of the flows $\sqrt{u_{n}^2+v_{n}^2}/{\rho_{n}^{\frac{\gamma-1}{2}}}\leq 1$ almost everywhere in $\Omega$;
\item[(2)] the Bernoulli functions of the flows $\frac{u_{n}^2+v_{n}^2}{2}+h(\rho_{n})$ are uniformly bounded above and below;
\item[(3)] the vorticities of the flows $\operatorname{curl}\mathbf{U}_{n}$ are precompact in $W_{loc}^{-1,p}(\Omega)$ for $1<p\leq 2$.
\end{itemize}
Hence, using the compensated compactness framework \cite[Theorem 2.2]{CHW-2016} yields that there exists a subsequence still labeled by $(\rho_{n},\mathbf{U}_{n})$ converging $(\rho,\mathbf{U})$ almost everywhere in $\Omega$. Thus, $(\rho,\mathbf{U})$ also solves the Euler system \eqref{1.1} in the weak sense and the boundary condition \eqref{1.2+} in the sense of normal trace. This completes the proof of Theorem \ref{thm2}. $\hfill\square$


\section*{Acknowledgements}
Dehua Wang was supported in part by NSF grants DMS-2219384 and DMS-2510532.
Tian-Yi Wang's research was supported in part by the NSFC grant 12371223.



\end{document}